\documentclass[]{elsarticle}

\usepackage[T1]{fontenc}

\usepackage{amsmath,amssymb,amsthm}

\usepackage{tikz}
\usetikzlibrary{arrows}
\usepackage{algorithm}
\usepackage[noend]{algpseudocode}
\usepackage{comment}

\theoremstyle{plain}
\newtheorem{lemma}{Lemma}
\newtheorem{theorem}[lemma]{Theorem}
\newtheorem{proposition}[lemma]{Proposition}
\newtheorem{corollary}[lemma]{Corollary}

\theoremstyle{definition}
\newtheorem{definition}[lemma]{Definition}

\theoremstyle{remark}
\newtheorem{remark}[lemma]{Remark}
\newtheorem{claim}[lemma]{Claim}
\newtheorem{observation}[lemma]{Observation}

\newcommand{\mtt}[1]{\mathtt{#1}}
\newcommand{\mbf}[1]{\mathbf{#1}}
\newcommand{\R}{\mathbb{R}}
\newcommand{\Q}{\mathbb{Q}}
\newcommand{\Z}{\mathbb{Z}}
\newcommand{\C}{\mathbb{C}}
\newcommand{\N}{\mathbb{N}}

\newcommand{\B}{\mathcal{B}}
\newcommand{\M}{\mathcal{M}}
\newcommand{\Oh}{\mathcal{O}}
\newcommand{\Pow}{\mathcal{P}}
\newcommand{\INF}{{}^\infty}

\newcommand{\alp}{\Sigma}
\newcommand{\varalp}{\Gamma}
\newcommand{\vvaralp}{\Delta}
\newcommand{\CA}{\Psi}
\newcommand{\varCA}{\Phi}
\newcommand{\vvarCA}{\Theta}
\newcommand{\locf}{\psi}

\newcommand{\intp}{\mathrm{Int}}
\newcommand{\univ}{\mathrm{Univ}}
\newcommand{\tra}{\mathcal{T}}
\newcommand{\size}[1]{\left\| #1 \right\|}

\newcommand{\addr}{\mathtt{Addr}}
\newcommand{\age}{\mathtt{Age}}
\newcommand{\simu}{\mathtt{Sim}}
\newcommand{\live}{\mathtt{Live}}
\newcommand{\work}{\mathtt{Work}}
\newcommand{\rmail}{\mathtt{RMail}}
\newcommand{\lmail}{\mathtt{LMail}}
\newcommand{\prog}{\mathtt{Prog}}
\newcommand{\out}{\mathtt{Out}}
\newcommand{\lev}{\mathtt{Level}}
\newcommand{\doom}{\mathtt{Doomed}}
\newcommand{\rbor}{\mathtt{RBord}}
\newcommand{\lbor}{\mathtt{LBord}}
\newcommand{\kind}{\mathtt{Kind}}
\newcommand{\cou}{\mathtt{Cnt}}
\newcommand{\ccou}{\mathtt{Cnt2}}
\newcommand{\celem}{\mathtt{Elem}}

\title{A Uniquely Ergodic Cellular Automaton\tnoteref{thanks}}
\tnotetext[thanks]{Research supported by the Academy of Finland Grant 131558}
\author[utu]{Ilkka T\"orm\"a}
\ead{iatorm@utu.fi}
\address[utu]{
TUCS -- Turku Centre for Computer Science \\
Department of Mathematics and Statistics \\
20014 University of Turku, Finland \\
+358 2 333 6012
}

\begin{document}

\begin{abstract}
We construct a one-dimensional uniquely ergodic cellular automaton which is not nilpotent.
This automaton can perform asymptotically infinitely sparse computation, which nevertheless never disappears completely.
The construction builds on the self-simulating automaton of G\'acs.
We also prove related results of dynamical and computational nature, including the undecidability of unique ergodicity, and the undecidability of nilpotency in uniquely ergodic cellular automata.
\end{abstract}

\begin{keyword}
cellular automata \sep nilpotency \sep dynamical system \sep ergodic theory \sep unique ergodicity
\end{keyword}

\maketitle

\section{Introduction}
\label{sec:Intro}

Cellular automata form a class of discrete dynamical systems that are simple to define but often surprisingly hard to analyze.
They consist of an infinite array of cells, arranged in a regular lattice of one or more dimensions, each of which contains a state drawn from a finite set.
The dynamics is given by a local function that synchronously assigns each cell a new state based on the previous states of itself and its neighbors.
Cellular automata can be used as idealized models of many physical and biological processes, or as massively parallel computing devices, and their history goes back to the 60's \cite{He69}.

Since cellular automata are so discrete and finitely definable, there are many natural decidability problems concerning their dynamical properties.
The most classical result in this area is the undecidability of nilpotency \cite{Ka92}.
A cellular automaton is nilpotent if there is a number $n \in \N$ such that, regardless of the initial contents of the cells, every cell is in the same `blank' state after $n$ timesteps.
There are several variants of this property, some of which are equivalent to nilpotency, and some of which are not.
If $n$ can depend on the initial configuration and on an individual cell, then the automaton is nilpotent (proved in \cite{GuRi08} for one-dimensional automata, and generalized in \cite{Sa12c} to all dimensions).
 In \cite{BoPoTh06}, a notion called $\mu$-quasi-nilpotency was defined for a probability measure $\mu$, using the concept of $\mu$-limit sets defined in \cite{KuMa00}.
Simple examples show that for many natural measures $\mu$, there exist $\mu$-quasi-nilpotent cellular automata which are not nilpotent.

In this article, we consider the notion of \emph{unique ergodicity}, which originates from ergodic theory.
A dynamical system is uniquely ergodic if there is a unique probability measure on the state space which is invariant under the dynamics.
For cellular automata with a blank state, this is equivalent to the condition that every cell of every initial configuration asymptotically spends only a negligible number of timesteps in a non-blank state.
Our main result is the construction of a uniquely ergodic cellular automaton which is not nilpotent.
We also define the notion of \emph{asymptotic nilpotency in density}, which means that the density of non-blank cells of any initial configuration converges to $0$, as the cellular automaton is applied repeatedly.
Our uniquely ergodic automaton also has this property, while the two notions are independent in general.

Our construction is based on the error-resistant cellular automaton of G\'acs \cite{Ga01} (the article is extremely long and complicated, see \cite{Gr01} for a more reader-friendly outline).
In particular, we use the powerful and adaptable tool of \emph{programmatic self-simulation}.
Self-similarity itself is not a new method in the field of symbolic dynamics and cellular automata, since the aperiodic tileset of Robinson, together with his proof of undecidability of the tiling problem, is based on geometric self-similarity \cite{Ro71}.
Another classical example is the tileset of Mozes, which realizes a two-dimensional substitution system \cite{Mo89}.
Programmatic self-simulation differs from these in that the self-similarity does not arise from geometric constructs, but from the system explicitly computing its own local structure and simulating it on a larger scale.
In recent years, programmatic self-simulation has been successfully applied to numerous problems, for example to realize every effective closed one-dimensional subshift by a tiling system, and to construct a tiling system whose every valid tiling has high Kolmogorov complexity (see \cite{DuRoSh12} and references therein).
The possible topological entropies of cellular automata in all dimensions have also been characterized using the method \cite{GuZi13}.

The main idea of this paper is the following.
We construct a cellular automaton which realizes approximate programmatic self-simulation.
This means that in the time evolution of certain well-formed configurations, the cells are organized into `macro-cells' that collectively behave similarly to the original system, so the macro-cells are again organized into `macro-macro-cells' and so on.
We construct the automaton so that it is \emph{sparse} in the following sense: any given cell will spend at least a certain percentage of its time in the blank state.
Each simulation level is also sparser than the previous, so that the macro-cells of a very high simulation level are blank almost all the time.
Those macro-cells that represent blank cells consist of blank cells themselves, so that in the limit, every cell is almost always blank.
We also guarantee that all patterns except the correct simulation structure are destroyed and replaced by blank cells.
The construction is split into two relatively independent parts: a computable transformation that turns any cellular automaton into a sparse version of itself, and a general self-simulating automaton on which we apply the transformation.
The main reason for the complexity of the construction is that programmatic self-simulation is inherently very complicated and its implementation contains lots of technical details, even more so when extra conditions are required from the system.
The length of our proof and the amount of definitions, on the other hand, are mainly due to the last condition that all incorrect structure must eventually disappear.

This paper is organized as follows.
Section~\ref{sec:Defs} consists of the (mostly) standard definitions and notation needed in this article.
Section~\ref{sec:Prelim} contains the characterization of unique ergodicity for cellular automata with a quiescent state, and is the only section containing general ergodic theory.
A reader not interested in ergodic theory may skip the proofs of this section, and use our characterization as the definition of unique ergodicity for cellular automata.
In Section~\ref{sec:Simu}, we define several notions related to simulations between cellular automata, and prove their basic properties.
Since our construction uses programmatic self-simulation, and involves quite complicated automata, Section~\ref{sec:Interpreter} presents a simple programming language for expressing CA rules.
We also define CA transformations, which are functions operating on these programs.
Section~\ref{sec:Sparse} describes a certain CA transformation that performs sparsification, that is, modifies its input automaton so that all cells spend only a small (but positive) fraction of their time in non-blank states.

Section~\ref{sec:UnivSim} describes a family of cellular automata that perform universal programmatic simulation.
This is not a formal notion, but refers to the fact that the simulation is carried out using a Turing machine that interprets the program of the target automaton.
In Section~\ref{sec:Amplification}, these universal simulators are used to realize amplification, which is a form of approximate self-simulation parameterized by a CA transformation.
Essentially, amplification produces a cellular automaton that programmatically simulates a transformed version of itself.
In Section~\ref{sec:SparseAmp}, we apply amplification to the sparsification transformation, obtaining an `infinitely sparse' cellular automaton, and prove its unique ergodicity, the main result of this article.
Section~\ref{sec:Further} contains some further definitions and results that can be proved using our amplifier construction, including asymptotic nilpotency in density, the undecidability of unique ergodicity, and the undecidability of nilpotency in the class of uniquely ergodic cellular automata.
Finally, Section~\ref{sec:Conclu} contains our conclusions and some future research directions.

This work is based on the author's Master's thesis \cite{To12}.

\section{Definitions}
\label{sec:Defs}

Let $\alp$ be a finite set, called the \emph{state set}.
For a set $D \subset \Z$ (or $D \subset \Z^2$), a function $s : D \to \alp$ is called a \emph{pattern over $\alp$ with domain $D$}.
A pattern whose domain is an interval (product of intervals) is called a \emph{word} (a \emph{rectangle}, respectively), and a \emph{cell} is a pattern whose domain is a singleton.
Translates of a pattern are sometimes deemed equal, and sometimes not; the choice should always be clear from the context.
In particular, by abuse of language and notation, cells are sometimes treated as elements of $\alp$.
A pattern with domain $\Z$ ($\Z^2$) is called a \emph{one-dimensional configuration} (\emph{two-dimensional configuration}, respectively).
The sets of one- and two-dimensional configurations are denoted $\alp^\Z$ and $\alp^{\Z^2}$, respectively.
If $x \in \alp^\Z$ and $i \in \Z$, we denote by $x_i$ the $i$'th cell of $x$, and if $D \subset \Z$, we denote by $x_D$ the restriction of $x$ to $D$.
For a two-dimensional configuration $\eta \in \alp^{\Z^2}$ and $(i,t) \in \Z^2$, we denote by $\eta^t_i$ the cell of $\eta$ at $(i,t)$, and if $D, E \subset \Z$, we denote by $\eta^E_D$ the restriction of $\eta$ to $D \times E \subset \Z^2$.
Also, for $i, t \in \Z$, we denote by $\eta_i = \eta^\Z_i$ the $i$'th column of $\eta$, and by $\eta^t = \eta^t_\Z$ the $t$'th row of $\eta$.
For a rectangular pattern $P \in \alp^{k m \times \ell n}$ and a function $\pi : \alp^{k \times \ell} \to \varalp$, define $\pi(P) \in \varalp^{m \times n}$ by $\pi(P)^t_i = \pi(P^{[t k, (t+1)k-1]}_{[i \ell, (i+1)\ell-1]})$, and extend this to two-dimensional configurations.
Define the \emph{Cantor metric} on $\alp^\Z$ by
\[ d_C(x,y) = \inf \{ 2^{-n} \;|\; n \in \N, x_{[-n,n]} = y_{[-n,n]} \}. \]

A \emph{subshift} is a topologically closed set $X \subset \alp^\Z$ satisfying $\sigma(X) = X$, where $\sigma : \alp^\Z \to \alp^\Z$ is the \emph{shift map}, defined by $\sigma(x)_i = x_{i+1}$ for all $x \in \alp^\Z$ and $i \in \Z$.
The \emph{language} of $X$, denoted by $\B(X)$, is the set of words in $\alp^*$ that occur as a subpattern in some configuration of $X$.
For $\ell \in \N$, we denote $\B_\ell(X) = \B(X) \cap \alp^\ell$.

A \emph{cellular automaton} is a mapping $\CA : X \to X$ on a subshift $X \subset \alp^\Z$ defined by a \emph{local function} $\locf : \B_{2r+1}(X) \to \alp$ such that $\CA(x)_i = \locf(x_{i-r}, \ldots, x_{i+r})$ for all $x \in X$ and $i \in \Z$.
Alternatively, cellular automata are exactly the $d_C$-continuous functions from $X$ to itself that commute with the shift map $\sigma$ \cite{He69}.
Note that the shift map is a cellular automaton itself.
Abusing notation, a cellular automaton can also be applied to a word $w \in \alp^n$ if $n \geq 2r$, the result being $\CA(w) = \locf(w_{[0, 2r]}) \locf(w_{[1, 2r+1]}) \cdots \locf(w_{[n-2r-1, n-1]}) \in \alp^{n-2r}$.
A state $B \in \alp$ is \emph{quiescent} for $\CA$ if $B^{2r+1} \in \B(X)$ and $\locf(B^{2r+1}) = B$.
Unless otherwise noted, we only consider automata with $X = \alp^\Z$, $r = 1$ and state set $\alp = \{0,1\}^K$ for some $K \in \N$.
In our constructions, we will partition the integer interval $[0,K-1]$ into disjoint subintervals $[k_0,k_1-1], [k_1,k_2-1], \ldots, [k_{n-1},k_n-1]$, where $k_0 = 0$ and $k_n = K$.
Then each state vector $v \in \{0,1\}^K$ can be expressed as $v = w_0 w_1 \cdots w_{n-1}$, where $w_i \in \{0,1\}^{k_{i+1}-k_i}$.
The subintervals $[k_i,k_{i+1}-1]$ are called \emph{fields}, and the binary vectors $w_i$ are the \emph{values} of those fields for the state $v$.
If a field is given a name, say $[k_i,k_{i+1}-1] = \mathtt{Field}_i$, then we denote $w_i = \mathtt{Field}_i(v)$.
We will also denote $\size{\CA} = \size{\alp} = K$, and $B_\alp = 0^{\size{\alp}}$ (the \emph{blank state} of $\alp$).
\emph{Unless otherwise noted, we always assume the blank state to be quiescent.}

Let $\CA : \alp^\Z \to \alp^\Z$ be a CA with local function $\locf$.
We define two \emph{nondeterministic cellular automata} associated to $\CA$, denoting the powerset of a set $X$ by $\Pow(X)$.
First, $\tilde \CA : \alp^\Z \to \Pow(\alp^\Z)$ is defined by
\[ \tilde \CA(x) = \{ y \in \alp^\Z \;|\; \forall i \in \Z : y_i = \locf(x_{i-1}, x_i, x_{i+1}) \vee y_i = B_\alp \}. \]
Second, for all triples $(a,b,c) \in \alp^3$, define $f(a,b,c) \subset \alp^3$ as the set of triples obtained by setting a subset of the coordinates of $(a,b,c)$ to the blank state $B_\alp$.
Then define $\hat \CA : \alp^\Z \to \Pow(\alp^\Z)$ by
\[ \hat \CA(x) = \{ y \in \alp^\Z \;|\; \forall i \in \Z : \exists v \in f(x_{i-1}, x_i, x_{i+1}) : y_i = \locf(v) \}. \]
Intuitively, $\tilde \CA$ behaves like the corresponding deterministic automaton, but in any coordinate it may choose to produce the blank state $B_\alp$ instead of the `correct' value.
The other nondeterministic automaton, $\hat \CA$, has even more freedom, as it can choose to regard any cell in a local neighborhood as blank.
These objects are not standard in the literature, but we will use them in the following way.
The simulations of a CA $\CA$ by another CA $\varCA$ that we construct are not perfect, in the sense that not all initial configurations of $\varCA$ result in a correct simulation of $\CA$.
However, in our case they always result in the simulation of $\tilde \CA$ or $\hat \CA$, which are asymptotically at least as sparse as $\CA$.

Let $\CA$ be a cellular automaton on $\alp^\Z$.
We say that a two-dimensional configuration $\eta \in \alp^{\Z^2}$ (or $\alp^{\Z \times \N}$) is a (one-directional) \emph{trajectory of $\CA$} if $\eta^{t+1} = \CA(\eta^t)$ for all $t \in \Z$ ($t \in \N$, respectively).
The set of all (one-directional) trajectories of $\CA$ is denoted by $\tra_\CA$ ($\tra^+_\CA$).
Trajectories of the associated nondeterministic automata are defined analogously, and the corresponding sets are denoted by $\tilde \tra_\CA$, $\tilde \tra^+_\CA$, $\hat \tra_\CA$ and $\hat \tra^+_\CA$.
Clearly $\tra_\CA \subset \tilde \tra_\CA \subset \hat \tra_\CA$ holds, as does the analogous inclusion chain for one-directional trajectories.
Note that each two-directional trajectory can also be seen as a one-directional trajectory by ignoring the cells with negative $t$-coordinate.
The set $\Omega_\CA = \bigcap_{n \in \Z} \CA^n(\alp^\Z)$ is called the \emph{limit set} of $\CA$.
An alternative characterization for the limit set is $\Omega_\CA = \{ \eta^0 \;|\; \eta \in \tra_\CA \}$ \cite{CuJaYu89}.

A cellular automaton $\CA$ is called \emph{nilpotent} if there exists $n \in \N$ such that $\CA^n(x) = \INF B_\alp^\infty$ for all $x \in \alp^\Z$.
This is equivalent to the condition $\Omega_\CA = \{\INF B_\alp^\infty\}$.

A \emph{topological dynamical system} is a tuple $(X,T)$, where $X$ is a compact metric space and $T : X \to X$ a continuous map.
A probability measure $\mu$ on the Borel subsets of $X$ is \emph{$T$-invariant}, if $\mu(Y) = \mu(T^{-1}(Y))$ holds for all Borel sets $Y \subset X$.
The set of $T$-invariant probability measures on $X$ is denoted by $\M_T$.
It is known that this set is always nonempty \cite{Wa82}.
The map $T$, or the system $(X,T)$, is called \emph{uniquely ergodic} if $|\M_T| = 1$.

We define the \emph{arithmetical hierarchy} of logical formulae over $\N$ as follows.
A \emph{bounded quantifier} has the form $\forall n < \tau$ or $\exists n < \tau$, where $\tau$ is a term which does not contain $n$ (but may contain other free variables).
A first-order arithmetical formula whose every quantifier is bounded has the classifications $\Pi^0_0$ and $\Sigma^0_0$.
If $\phi$ is $\Sigma^0_n$ ($\Pi^0_n$), then all formulae logically equivalent to $\forall n: \phi$ ($\exists n: \phi$) are $\Pi^0_{n+1}$ ($\Sigma^0_{n+1}$, respectively).
A set $X \subset \N$ gets the same classification as a formula $\phi$, if $X = \{ n \in \N \;|\; \phi(n) \}$ holds.
It is known that the class of $\Sigma^0_1$-subsets of $\N$ consists of exactly the recursively enumerable sets, and $\Pi^0_1$ of their complements.
When classifying sets of objects other than natural numbers, such as cellular automata, we assume that the objects are in a natural and computable bijection with $\N$.
For a general introduction to the topic, see \cite[Chapter IV.1]{Od89}.

For two sets $X, Y \subset \N$, we say that $Y$ is \emph{reducible to $X$} (also known as \emph{many-one reducible}) if there exists a computable function $f : \N \to \N$ such that $f(Y) \subset X$ and $f(\N \setminus Y) \subset \N \setminus X$.
For a class $\Xi$ of subsets, we say that $X$ is \emph{$\Xi$-hard} if every $Y$ in $\Xi$ is reducible to $X$.
If $X$ also lies in $\Xi$, we say that $X$ is \emph{$\Xi$-complete}.
Note that the famous class of \emph{NP-complete problems} does not fit into this definition, since the definition of \emph{polynomial reducibility} further assumes that the function $f$ is computable in polynomial time.

The operator ${\bmod}$ has two related but completely orthogonal uses in this paper.
First, if $i, j \in \Z$ and $n \in \N$, the notation $i \equiv j \bmod n$ is equivalent to the formula $\exists k \in \Z: i - j = kn$.
Here, ${\bmod}$ is used in conjunction with ${\equiv}$ to form a ternary predicate.
Second, if $i \in \Z$ and $n \in \N$, then $i \bmod n$ denotes the unique number $j \in [0,n-1]$ with $i \equiv j \bmod n$.
In this case, ${\bmod}$ is a binary function from $\Z \times \N$ to $\N$.

This paper contains several pictures of spacetime diagrams of cellular automata.
As some of the automata contain Turing machine computations, and it is customary to illustrate these using a time axis that increases upwards, we have chosen this representation also for our spacetime diagrams.

\section{Preliminary Results}
\label{sec:Prelim}

In this section, we state some preliminary results about nilpotency and unique ergodicity in cellular automata.
We begin with a characterization of nilpotency.
The first and third claims are proved in \cite{CuJaYu89}, and the second in \cite{GuRi08}.
We do not use this result explicitly, and it is meant to be a formal version of the remarks on nilpotency in Section~\ref{sec:Intro}.

\begin{proposition}
\label{prop:Nilpotent}
Let $\CA : \alp^\Z \to \alp^\Z$ be a cellular automaton with quiescent state $B_\alp$. The following conditions are equivalent to the nilpotency of $\CA$.
\begin{itemize}
\item For all $x \in \alp^\Z$, there exists $n \in \N$ such that $\CA^n(x) = \INF B_\alp^\infty$.
\item For all $x \in \alp^\Z$ and $i \in \Z$, there exists $n \in \N$ such that $\CA^m(x)_i = B_\alp$ for all $m \geq n$.
\item The limit set of $\CA$ is a singleton: $\Omega_\CA = \{\INF B_\alp^\infty\}$.
\end{itemize}
\end{proposition}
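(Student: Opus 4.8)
The plan is to prove the cycle of implications
\[ \text{nilpotency} \;\Longrightarrow\; (1) \;\Longrightarrow\; (2) \;\Longrightarrow\; \text{nilpotency}, \]
where $(1)$, $(2)$, $(3)$ refer to the three listed conditions in order, together with the separate equivalence nilpotency $\Leftrightarrow (3)$. The first two implications are immediate. If $\CA^n \equiv \INF B_\alp^\infty$ for a single fixed $n$, that same $n$ witnesses $(1)$. And if $\CA^n(x) = \INF B_\alp^\infty$, then $B_\alp$ being quiescent gives $\CA^m(x) = \INF B_\alp^\infty$ for every $m \geq n$, so each cell of $x$ is blank from time $n$ on, which is $(2)$.

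For nilpotency $\Leftrightarrow (3)$, write $K_n = \CA^n(\alp^\Z)$, so that $\Omega_\CA = \bigcap_n K_n$ and $K_0 \supseteq K_1 \supseteq \cdots$. Each $K_n$ is closed, being a continuous image of the compact space $\alp^\Z$, is $\sigma$-invariant since $\CA$ commutes with $\sigma$, and contains the fixed point $\INF B_\alp^\infty$. If $\CA^N \equiv \INF B_\alp^\infty$ then $K_m = \{\INF B_\alp^\infty\}$ for all $m \geq N$, so $\Omega_\CA = \{\INF B_\alp^\infty\}$. For the converse, I would use the standard fact that a decreasing sequence of nonempty compact sets converges to its intersection in the Hausdorff metric: assuming $\Omega_\CA = \{\INF B_\alp^\infty\}$, for all large $N$ every configuration in $K_N$ then has $B_\alp$ at the origin, and combined with the $\sigma$-invariance of $K_N$ this forces $K_N = \{\INF B_\alp^\infty\}$, i.e.\ $\CA^N \equiv \INF B_\alp^\infty$. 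This is the argument of \cite{CuJaYu89}.

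The one substantial step is $(2) \Rightarrow$ nilpotency, which I expect to be the main obstacle. Because $(1) \Rightarrow (2)$, proving it is equivalent to proving that if, for every configuration $x$, the orbit $(\CA^n(x))_{n}$ converges to $\INF B_\alp^\infty$ in the Cantor metric, then $\CA$ is nilpotent — this is the theorem of Guillon and Richard \cite{GuRi08}, which I would cite rather than reprove. It is genuinely more delicate than the steps above. If $\CA$ is not nilpotent then, by the equivalence with $(3)$ and the characterization $\Omega_\CA = \{\eta^0 \mid \eta \in \tra_\CA\}$, there is a bi-infinite trajectory through a configuration that is non-blank at the origin, hence for every $n$ a configuration $x^{(n)}$ with $\CA^n(x^{(n)})_0 \neq B_\alp$; but a Cantor limit of the $x^{(n)}$ need not have any single cell that is non-blank at infinitely many times, and gluing light-cone-separated copies of the $x^{(n)}$ yields only a configuration non-blank along an escaping sequence of cells — neither contradicts $(2)$. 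The argument of \cite{GuRi08} gets around this with a more intricate compactness argument. Since this proposition is used only informally, as a formal counterpart to the discussion of Section~\ref{sec:Intro}, invoking \cite{GuRi08} here and \cite{CuJaYu89} for the equivalence with $(3)$ completes the proof.
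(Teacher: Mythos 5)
Your proposal is correct and follows essentially the same route as the paper, which gives no proof of its own but simply attributes the first and third claims to \cite{CuJaYu89} and the second to \cite{GuRi08}; you cite the same results at the same points and merely flesh out the routine implications (nilpotency $\Rightarrow$ (1) $\Rightarrow$ (2), and the compactness argument for (3)), all of which are accurate.
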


Next, let us consider unique ergodicity.
Of course, a nilpotent cellular automaton $\CA : \alp^\Z \to \alp^\Z$ is uniquely ergodic, since $\M_\CA$ consists of the Dirac measure $\mu_0$ defined by
\begin{equation}
\label{eq:Dirac}
\mu_0(Y) = \left\{ \begin{array}{cc}
	1, & \mbox{if~} \INF B_\alp^\infty \in Y \\
	0, & \mbox{if~} \INF B_\alp^\infty \notin Y.
\end{array} \right.
\end{equation}
General invariant measures can be tricky to work with, so we prove a combinatorial characterization (Corollary~\ref{cor:UEChara}) of unique ergodicity for cellular automata with a quiescent state.
Uniquely ergodic cellular automata turn out to have a strong nilpotency-like property.
First, we show that it suffices to consider the limit set.

\begin{lemma}
\label{lem:LimitSet}
A cellular automaton $\CA : \alp^\Z \to \alp^\Z$ is uniquely ergodic if and only if its restriction $\CA|_{\Omega_\CA}$ to the limit set is.
\end{lemma}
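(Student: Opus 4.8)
### Proof plan

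The plan is to show that the invariant measures of $\CA$ on $\alp^\Z$ are exactly the invariant measures of $\CA|_{\Omega_\CA}$ on $\Omega_\CA$, after identifying a measure on $\Omega_\CA$ with its pushforward under the inclusion $\Omega_\CA \hookrightarrow \alp^\Z$. One direction is immediate: if $\mu \in \M_{\CA|_{\Omega_\CA}}$, then viewing $\mu$ as a Borel measure on $\alp^\Z$ supported on the closed set $\Omega_\CA$, it is clearly $\CA$-invariant, since $\CA$ agrees with $\CA|_{\Omega_\CA}$ on that support and $\Omega_\CA$ is $\CA$-invariant. So the map $\M_{\CA|_{\Omega_\CA}} \to \M_\CA$ by pushforward is well-defined and injective.

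The substantive direction is that every $\CA$-invariant probability measure $\mu$ on $\alp^\Z$ is in fact supported on $\Omega_\CA$. Recall $\Omega_\CA = \bigcap_{n \in \N} \CA^n(\alp^\Z)$, and each $\CA^n(\alp^\Z)$ is compact (continuous image of a compact space), hence closed, and the sequence is decreasing. First I would observe that $\mu(\CA^n(\alp^\Z)) \geq \mu(\CA^{-n}(\CA^n(\alp^\Z))) = \mu(\CA^n(\alp^\Z))$ is the wrong inequality to use directly; instead, invariance gives $\mu(\CA^n(\alp^\Z)) = \mu\bigl(\CA^{-n}(\CA^n(\alp^\Z))\bigr)$, and since $\CA^{-n}(\CA^n(\alp^\Z)) \supseteq \alp^\Z$ trivially... more carefully: for any Borel set $Y$, invariance applied $n$ times yields $\mu(Y) = \mu(\CA^{-n}Y)$, and taking $Y = \CA^n(\alp^\Z)$ we get $\mu(\CA^n(\alp^\Z)) = \mu(\CA^{-n}(\CA^n(\alp^\Z))) = \mu(\alp^\Z) = 1$, because $\CA^{-n}(\CA^n(\alp^\Z)) = \alp^\Z$ (every point maps into the image). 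Hence $\mu(\CA^n(\alp^\Z)) = 1$ for all $n$, and by continuity of measure along the decreasing intersection of the closed sets $\CA^n(\alp^\Z)$, we get $\mu(\Omega_\CA) = \lim_n \mu(\CA^n(\alp^\Z)) = 1$. Thus $\mu$ restricts to an invariant probability measure on $\Omega_\CA$, and this restriction pushes forward to $\mu$.

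Combining the two directions, pushforward is a bijection between $\M_{\CA|_{\Omega_\CA}}$ and $\M_\CA$, so one of these sets is a singleton if and only if the other is, which is exactly the claim. I expect the only point requiring care is the measure-theoretic bookkeeping around $\CA^{-n}(\CA^n(\alp^\Z)) = \alp^\Z$ and the $\sigma$-additivity argument for $\mu(\Omega_\CA) = 1$; the compactness of each $\CA^n(\alp^\Z)$ (needed so that the intersection really has full measure rather than just finite measure $1$) and the measurability of all sets involved are routine but should be stated. No obstacle of a genuinely difficult nature arises here — this lemma is a warm-up reduction to the limit set, to be used in tandem with the forthcoming combinatorial characterization.
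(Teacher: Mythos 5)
Your proof is correct, and its overall strategy coincides with the paper's: both directions are handled by identifying $\M_{\CA|_{\Omega_\CA}}$ with $\M_\CA$, and the whole content is the claim that every $\mu \in \M_\CA$ satisfies $\mu(\Omega_\CA) = 1$. The only difference lies in how that claim is established. The paper partitions the complement $\alp^\Z \setminus \Omega_\CA$ into the sets $X_n$ of configurations that have a $\CA^n$-preimage but no $\CA^{n+1}$-preimage; since $\CA^{-n-1}(X_n) = \emptyset$, invariance gives $\mu(X_n) = 0$ for every $n$, and countable additivity finishes the argument. You instead observe that $\CA^{-n}(\CA^n(\alp^\Z)) = \alp^\Z$, so each image $\CA^n(\alp^\Z)$ has full measure, and then apply continuity from above along the decreasing intersection $\Omega_\CA = \bigcap_{n \in \N} \CA^n(\alp^\Z)$. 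Both arguments are equally short and rest on the same use of invariance; yours is perhaps the more standard dynamical phrasing, while the paper's avoids any appeal to continuity of measure. One small correction to your parenthetical remark: compactness of the sets $\CA^n(\alp^\Z)$ is needed only to guarantee that they (and hence $\Omega_\CA$) are Borel; the passage to the limit $\mu(\Omega_\CA) = \lim_n \mu(\CA^n(\alp^\Z))$ requires nothing beyond finiteness of the probability measure, so no compactness enters at that step. Your remarks on injectivity and surjectivity of the pushforward map are fine and make explicit a point the paper leaves implicit.
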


\begin{proof}
Denote $\varCA = \CA|_{\Omega_\CA}$.
If $\CA$ is uniquely ergodic, then clearly $\varCA$ must also be, since any invariant measure of the latter is also in $\M_\CA$.

Let then $\mu \in \M_\CA$ be arbitrary.
We will prove that $\mu(\Omega_\CA) = 1$, showing that the restriction of $\mu$ to subsets of $\Omega_\CA$ is in $\M_\varCA$.
For that, let
\[ X_n = \{ x \in \alp^\Z \;|\; \CA^{-n}(x) \neq \emptyset, \CA^{-n-1}(x) = \emptyset \} \subset \alp^\Z \]
for all $n \in \N$.
Clearly, the sets $X_n$ are pairwise disjoint, and cover the set $\alp^\Z - \Omega_\CA$.
We also have that $\mu(X_n) = \mu(\CA^{-n-1}(X_n)) = 0$ for all $n$, and it follows that $\mu(\Omega_\CA) = \mu(\alp^\Z) - \mu(\bigcup_{n \in \N} X_n) = 1$.
Thus if $\CA$ is not uniquely ergodic, then neither is $\varCA$.
\end{proof}

We need the following classical result of ergodic theory:

\begin{lemma}[Theorem 6.19 of \cite{Wa82}]
\label{lem:UEChara}
Let $(X,T)$ be a topological dynamical system.
The following conditions are equivalent:
\begin{enumerate}
\item  The map $T$ is uniquely ergodic.
\item There exists a measure $\mu \in \M_T$ such that $(\frac{1}{n} \sum_{i=0}^{n-1} f(T^i(x)))_{n \in \N}$ converges to $\int_X \! f \, d \mu$ for every $x \in X$ and continuous $f : X \to \C$.
\item For every continuous $f : X \to \C$, the sequence $(\frac{1}{n} \sum_{i=0}^{n-1} f \circ T^i)_{n \in \N}$ converges uniformly to a constant function.
\end{enumerate}
\end{lemma}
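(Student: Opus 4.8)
The plan is to prove the three equivalences by closing the cyclic chain of implications $(1) \Rightarrow (3) \Rightarrow (2) \Rightarrow (1)$. Throughout I write $A_n f = \frac{1}{n}\sum_{i=0}^{n-1} f \circ T^i$ for the ergodic averages, and I freely invoke two standard facts: by the Riesz representation theorem, Borel probability measures on the compact metric space $X$ correspond exactly to positive normalized linear functionals on the space $C(X)$ of continuous complex-valued functions; and the set of such measures is sequentially weak*-compact (Banach--Alaoglu, together with metrizability of the weak* topology on the unit ball, which holds because $C(X)$ is separable for $X$ compact metric). Both the easy directions and the hard one are phrased in terms of how empirical and invariant measures interact with these averages.

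For the two routine directions I would argue as follows. To get $(3) \Rightarrow (2)$, suppose every $A_n f$ converges uniformly to a constant $c(f)$. Then $f \mapsto c(f)$ is linear, positive (each $A_n f \geq 0$ whenever $f \geq 0$), and satisfies $c(\mathbf{1}) = 1$, so Riesz supplies a probability measure $\mu$ with $c(f) = \int_X f \, d\mu$. Since $A_n(f \circ T) - A_n f = \frac{1}{n}(f \circ T^n - f) \to 0$ uniformly, we obtain $\int_X f \circ T \, d\mu = \int_X f \, d\mu$, i.e. $\mu \in \M_T$, and the pointwise limit $A_n f(x) \to \int_X f \, d\mu$ is precisely $(2)$. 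To get $(2) \Rightarrow (1)$, take the distinguished $\mu$ from $(2)$ and any $\nu \in \M_T$; invariance of $\nu$ yields $\int_X A_n f \, d\nu = \int_X f \, d\nu$ for every $n$, while dominated convergence (the averages are bounded by $\|f\|_\infty$ and tend pointwise to the constant $\int_X f \, d\mu$) gives $\int_X A_n f \, d\nu \to \int_X f \, d\mu$. Hence $\int_X f \, d\nu = \int_X f \, d\mu$ for all continuous $f$, forcing $\nu = \mu$ and $|\M_T| = 1$.

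The substance of the argument, and the step I expect to be the main obstacle, is $(1) \Rightarrow (3)$, since it must upgrade uniqueness of the invariant measure into uniform control of the averages at every point at once. I would argue by contradiction: if $A_n f \to \int_X f \, d\mu =: c$ fails to hold uniformly for some continuous $f$, there are $\epsilon > 0$, indices $n_k \to \infty$, and points $x_k \in X$ with $|A_{n_k} f(x_k) - c| \geq \epsilon$. The key device is the sequence of empirical measures $\mu_k = \frac{1}{n_k}\sum_{i=0}^{n_k - 1} \delta_{T^i(x_k)}$, for which $\int_X f \, d\mu_k = A_{n_k} f(x_k)$. By sequential weak*-compactness a subsequence of $(\mu_k)$ converges to a probability measure $\nu$ (it remains a probability measure, not merely a subprobability one, because $\mathbf{1} \in C(X)$ and $\int_X \mathbf{1} \, d\mu_k = 1$ passes to the limit), and $\nu$ is $T$-invariant since $|\int_X g \circ T \, d\mu_k - \int_X g \, d\mu_k| \leq 2\|g\|_\infty / n_k \to 0$ for every continuous $g$, a bound that survives the weak* limit. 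Unique ergodicity then forces $\nu = \mu$, so $\int_X f \, d\mu_k \to c$ along the subsequence, contradicting $|A_{n_k} f(x_k) - c| \geq \epsilon$. Thus $A_n f \to c$ uniformly, and as $c = \int_X f \, d\mu$ is constant in $x$, condition $(3)$ holds. With all three implications in place, the equivalence is established.
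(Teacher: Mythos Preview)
The paper does not supply its own proof of this lemma: it is quoted verbatim as Theorem~6.19 of \cite{Wa82} and used as a black box. Your argument is correct and is precisely the classical proof one finds in Walters, so there is nothing to compare beyond noting that you have reproduced the standard cycle $(1)\Rightarrow(3)\Rightarrow(2)\Rightarrow(1)$ via Riesz representation, dominated convergence, and weak*-compactness of empirical measures.
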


For the next result and its proof, we make the following definition.

\begin{definition}
Let $X \subset \alp^\Z$ be a subshift, let $\CA : X \to X$ be a cellular automaton, let $w \in \B_{2r+1}(X)$, and let $x \in X$.
We denote
\[ d_\CA(w, x) = \liminf_{n \rightarrow \infty} \frac{1}{n} |\{ k \in [0,n-1] \;|\; \CA^t(x)_{[-r,r]} = w \}|, \]
the lower asymptotic density of the occurrences of the word $w$ at the origin.
\end{definition}

The combinatorial characterization of uniquely ergodic cellular automata is the following.

\begin{proposition}
\label{prop:UEChara}
Let $X \subset \alp^\Z$ be a subshift with $\INF B_\alp^\infty \in X$, and let $\CA : X \to X$ be a cellular automaton.
Then $\CA$ is uniquely ergodic if and only if $d_\Psi(B_\alp, x) = 1$ holds for all $x \in X$.
If this is the case, the convergence of the limit is uniform in $x$.
\end{proposition}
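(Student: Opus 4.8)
The plan is to apply the ergodic-theoretic characterization in Lemma~\ref{lem:UEChara}, using the fact that $\CA$ is shift-commuting to reduce the analysis of Cesàro averages of arbitrary continuous functions to the single quantity $d_\CA(B_\alp, x)$. First I would fix some notation: for a word $w \in \B_{2r+1}(X)$ let $[w]$ denote the corresponding cylinder set at the origin, and recall that the locally constant functions (finite linear combinations of indicator functions of cylinders) are dense in $C(X,\C)$ with the sup norm. By Lemma~\ref{lem:UEChara}, unique ergodicity is equivalent to the uniform convergence, for every continuous $f$, of $\frac1n\sum_{i=0}^{n-1} f\circ\CA^i$ to a constant; and by a standard $\varepsilon/3$ density argument it suffices to check this for $f = \mathbf{1}_{[w]}$ with $w$ ranging over $\B_{2r+1}(X)$, since $\|f - g\|_\infty$ small implies $\|\frac1n\sum f\circ\CA^i - \frac1n\sum g\circ\CA^i\|_\infty$ small uniformly in $n$.

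The key step is the following dichotomy. Since $\INF B_\alp^\infty$ is a fixed point of $\CA$, the Dirac measure $\mu_0$ of \eqref{eq:Dirac} is always in $\M_\CA$, so $\CA$ is uniquely ergodic if and only if $\mu_0$ is its only invariant measure. Now suppose $d_\CA(B_\alp, x) = 1$ for all $x \in X$. For the all-blank word $w = B_\alp^{2r+1}$ we have $\frac1n |\{i \in [0,n-1] : \CA^i(x)_{[-r,r]} = w\}| \to 1$, and I claim the convergence is automatically uniform in $x$: the complement event "$\CA^i(x)_{[-r,r]} \neq B_\alp^{2r+1}$" is a clopen condition, so if it failed to be uniform there would be points $x^{(k)}$ and indices $n_k \to \infty$ with the blank-density along $x^{(k)}$ bounded away from $1$; passing to a subsequential limit $x$ (compactness of $X$) and using continuity of $\CA$ one extracts a point violating $d_\CA(B_\alp, x) = 1$ — here I would have to be slightly careful, as $\liminf$ does not pass to limits freely, so the cleaner route is to directly bound the non-blank density: set $p_i(x) = \mathbf{1}[\CA^i(x)_{[-r,r]} \neq B_\alp^{2r+1}]$, note $\sum_{i<n} p_i(x)$ is a sum of finitely many clopen functions hence continuous in $x$, and observe that $\sup_x \frac1n\sum_{i<n}p_i(x)$ is a decreasing-in-refinement... in fact the honest argument is: if $\limsup_n \sup_x \frac1n\sum_{i<n}p_i(x) = \delta > 0$, a diagonalization over $x$ and $n$ yields a single $x$ with $\limsup_n \frac1n\sum_{i<n}p_i(x) \geq \delta$, and then one must upgrade this to a statement about $\liminf$ along that orbit or along a shifted/iterated orbit, using shift-invariance and the subadditivity of the counting function. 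Once the all-blank density converges to $1$ uniformly, every non-blank cylinder $[w]$, $w \neq B_\alp^{2r+1}$, has Cesàro averages $\leq \frac1n\sum_{i<n}p_i(x) \to 0$ uniformly, and $[B_\alp^{2r+1}]$ has Cesàro averages $\to 1$ uniformly; by the density argument all continuous $f$ have Cesàro averages converging uniformly to $\int f\,d\mu_0$, so condition~(3) of Lemma~\ref{lem:UEChara} holds and $\CA$ is uniquely ergodic. This simultaneously proves the "uniform in $x$" clause of the proposition.

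Conversely, suppose $d_\CA(B_\alp, x) < 1$ for some $x \in X$. I would build an invariant measure different from $\mu_0$, which is the standard Krylov--Bogolyubov construction: choose $n_k \to \infty$ along which $\frac{1}{n_k}\sum_{i<n_k} p_i(x) \to \delta > 0$ (possible by the definition of $d_\CA$ as a $\liminf$ being strictly below $1$, so the $\liminf$ of the blank-density is $<1$, hence the $\limsup$ of the non-blank density is $>0$), form the empirical measures $\nu_{n_k} = \frac{1}{n_k}\sum_{i<n_k}\delta_{\CA^i(x)}$, and pass to a weak-$*$ limit $\mu$ along a further subsequence. Then $\mu \in \M_\CA$ by the usual argument (weak-$*$ limits of empirical measures are invariant, using continuity of $\CA$ and that $\frac1n(\delta_{\CA^n(x)} - \delta_x) \to 0$), and $\mu(X \setminus [B_\alp^{2r+1}]) = \lim_k \nu_{n_k}(X\setminus[B_\alp^{2r+1}]) = \delta > 0$ because $X \setminus [B_\alp^{2r+1}]$ is clopen. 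Since $\mu_0$ gives that set measure $0$, we have $\mu \neq \mu_0$, so $|\M_\CA| \geq 2$ and $\CA$ is not uniquely ergodic.

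The main obstacle I anticipate is the bookkeeping around $\liminf$ in the definition of $d_\CA$: both the "uniform in $x$" upgrade in the forward direction and the selection of the subsequence $n_k$ in the converse direction hinge on correctly manipulating the $\liminf$/$\limsup$ of the blank-density along an orbit, and on the interplay between "density at the origin" and compactness/shift-invariance of $X$. Everything else — the density argument in $C(X)$, the Krylov--Bogolyubov construction, the clopen-ness of cylinders — is routine, so I would concentrate the write-up on making the density extraction rigorous, possibly by first proving the clean auxiliary fact that $x \mapsto d_\CA(B_\alp, x)$ being identically $1$ is equivalent to $\lim_n \sup_{x \in X} \frac1n|\{i<n : \CA^i(x)_{[-r,r]} \neq B_\alp^{2r+1}\}| = 0$, which isolates the only genuinely delicate point.
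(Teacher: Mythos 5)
Your converse direction (Krylov--Bogolyubov: empirical measures along a subsequence where the non-blank density stays bounded away from $0$, weak-$*$ limit is invariant and charges a non-blank clopen cylinder, hence differs from $\mu_0$) is correct and is a legitimate alternative to the paper's converse, which instead applies condition (3) of Lemma~\ref{lem:UEChara} to the indicator of $\{y : y_0 = B_\alp\}$ and identifies the constant as $1$ via the fixed point $\INF B_\alp^\infty$.

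The genuine gap is in your forward direction, and it is exactly the point you flag as ``delicate'': you route unique ergodicity through condition (3) of Lemma~\ref{lem:UEChara}, so you need the \emph{uniform} statement $\sup_{x}\frac1n|\{i<n : \CA^i(x)_{[-r,r]}\neq B_\alp^{2r+1}\}|\to 0$ before you know unique ergodicity, and your sketch of this auxiliary fact does not go through. The diagonalization you describe fails: for each $n_k$ you get a point $x_k$ with non-blank density $\geq\delta$ at scale $n_k$, and continuity of $x\mapsto\frac1n\sum_{i<n}p_i(x)$ only holds at a \emph{fixed} scale $n$, so passing to a limit point of $(x_k)$ gives no control of its density at the scales $n_k$; and even granting a single point with $\limsup$ of the non-blank density $\geq\delta$, this does not contradict the hypothesis, which only bounds the $\liminf$ of the blank density. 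The ``upgrade via shift-invariance and subadditivity'' is not an argument as written. The clean repair is to avoid needing uniformity at this stage at all: condition (2) of Lemma~\ref{lem:UEChara} requires only \emph{pointwise} convergence of the Ces\`aro averages to $\int f\,d\mu_0$ for every $x$ and every continuous $f$, which your $\epsilon/3$ density argument already delivers from $d_\CA(B_\alp,x)=1$ for all $x$ (this is the paper's route). Unique ergodicity then follows, and the uniformity clause of the proposition comes for free afterwards by applying condition (3) to the blank-indicator function -- it should not be proved up front by a compactness argument. (Alternatively, your own Krylov--Bogolyubov machinery run in the forward direction shows directly that every invariant measure gives all non-blank cylinders measure $0$, hence equals $\mu_0$; but either way the direct ``pointwise density implies uniform density'' claim should be dropped rather than patched.)
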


\begin{proof}
Suppose first that $d_\Psi(B_\alp, x) = 1$ holds for all $x \in X$.
Let $\mu_0$ be the Dirac probability measure on $X$ defined by \eqref{eq:Dirac}.
Then $\mu_0$ is $\CA$-invariant, since $B_\alp$ is quiescent, and $\int_X \! f \, d \mu_0 = f(\INF B_\alp \INF)$ holds for all continuous functions $f : X \to \C$.

Let $x \in X$ and $f : X \to \C$ continuous.
For each $n \in \N$, let $f_n : X \to \C$ be a (continuous) function of the form $\sum_{i=1}^k a_i \boldsymbol{1}_{C_i}$ with $a_i \in \C$ and $C_i \subset X$ a cylinder set (a set of the form $\{ y \in X \;|\; y_{[-r,r]} = w \}$ for a word $w \in \B_{2r+1}(X)$), and such that $|f(y)-f_n(y)| < \frac{1}{n}$ for all $y \in X$.
This is possible, since functions of this form are dense in the uniform topology.

The condition $d_\CA(B_\alp, x) = 1$ implies that $d_\CA(B_\alp^{2r+1}, x) = 1$ for all $r \in \N$, and thus for all $n \in \N$, the average $\frac{1}{m} \sum_{i=0}^{m-1} f_n(\CA^i(x))$ converges to the constant $f_n(\INF B_\alp \INF)$ as $m$ grows.
Hence also $\frac{1}{m} \sum_{i=0}^{m-1} f(\CA^i(x))$ converges to $f(\INF B_\alp \INF)$, which can be proved using a standard $\frac{\epsilon}{3}$-approximation.
Unique ergodicity now follows by Lemma~\ref{lem:UEChara}.

Let then $\CA$ be uniquely ergodic, and consider the (again continuous) function $f : \alp^\Z \to \{0,1\}$ defined by $f^{-1}(1) = \{ y \in \alp^\Z \;|\; y_0 = B_\alp \}$.
By Lemma~\ref{lem:UEChara}, the sequence $(\frac{1}{n} \sum_{i=0}^{n-1} f \circ \CA^i)_{n \in \N}$ converges uniformly to a constant, which must be $1$ because of the fixed point $\INF B_\alp \INF$.
This implies that $d_\CA(B_\alp,x) = 1$, and that the convergence is uniform in $x$.
\end{proof}

Lemma~\ref{lem:LimitSet} and the above together imply the following.

\begin{corollary}
\label{cor:UEChara}
A cellular automaton $\CA : \alp^\Z \to \alp^\Z$ is uniquely ergodic if and only if $d_\Psi(B_\alp, x) = 1$ holds for all $x \in \Omega_\CA$, and then the convergence of the limit is uniform in $x$.
\end{corollary}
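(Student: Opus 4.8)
The plan is to combine the two results just proved, Lemma~\ref{lem:LimitSet} and Proposition~\ref{prop:UEChara}, so that the only real work is verifying that their hypotheses apply to the pair $(\Omega_\CA, \CA|_{\Omega_\CA})$.

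First I would record that the limit set $\Omega_\CA = \bigcap_{n \in \Z} \CA^n(\alp^\Z)$ is itself a subshift: each $\CA^n(\alp^\Z)$ is closed, being the continuous image of the compact space $\alp^\Z$, and shift-invariant, since $\CA$ commutes with $\sigma$; hence the intersection $\Omega_\CA$ is closed and shift-invariant. Moreover, because $B_\alp$ is quiescent, the configuration $\INF B_\alp^\infty$ is a fixed point of $\CA$, so it lies in $\CA^n(\alp^\Z)$ for every $n$ and therefore in $\Omega_\CA$. Consequently $\varCA := \CA|_{\Omega_\CA}$ is a well-defined cellular automaton on the subshift $\Omega_\CA$ (its local function is the restriction of $\locf$ to $\B_{2r+1}(\Omega_\CA)$), and $\Omega_\CA$ contains the blank configuration, so Proposition~\ref{prop:UEChara} applies to $\varCA$.

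The argument is then a chain of equivalences: $\CA$ is uniquely ergodic if and only if $\varCA$ is uniquely ergodic, by Lemma~\ref{lem:LimitSet}; and $\varCA$ is uniquely ergodic if and only if $d_{\varCA}(B_\alp, x) = 1$ for all $x \in \Omega_\CA$, with the convergence uniform in $x$, by Proposition~\ref{prop:UEChara} applied to the subshift $\Omega_\CA$. Finally I would observe that for $x \in \Omega_\CA$ the forward orbit $(\CA^t(x))_{t \in \N}$ stays inside $\Omega_\CA$ and coincides with the orbit of $x$ under $\varCA$, so $d_{\varCA}(B_\alp, x)$ and $d_\CA(B_\alp, x)$ denote literally the same quantity; substituting yields the stated characterization together with the uniformity.

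There is essentially no obstacle here beyond this bookkeeping. The one point worth being explicit about is that "uniquely ergodic" for $\varCA$ is understood with respect to the compact space $\Omega_\CA$ rather than $\alp^\Z$ — which is precisely how Lemma~\ref{lem:LimitSet} was phrased — and that Proposition~\ref{prop:UEChara} was deliberately stated for an arbitrary subshift $X$ (with $\INF B_\alp^\infty \in X$) so that it can be instantiated at $X = \Omega_\CA$ without further ado.
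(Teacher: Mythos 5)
Your proposal is correct and follows exactly the paper's route: the paper derives Corollary~\ref{cor:UEChara} precisely by combining Lemma~\ref{lem:LimitSet} with Proposition~\ref{prop:UEChara}, leaving the details implicit. Your added bookkeeping — that $\Omega_\CA$ is a subshift containing $\INF B_\alp^\infty$ (since $B_\alp$ is quiescent), that the restriction $\CA|_{\Omega_\CA}$ is a CA on it, and that $d_{\CA|_{\Omega_\CA}}(B_\alp,x)=d_\CA(B_\alp,x)$ for $x\in\Omega_\CA$ — is exactly what makes that one-line deduction rigorous.
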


Another way to state Corollary~\ref{cor:UEChara} is that a CA $\CA : \alp^\Z \to \alp^\Z$ is uniquely ergodic if and only if the asymptotic density of $B_\alp$-cells in the central column $\eta^{[0,\infty)}_0$ is $1$ for every $\eta \in \tra_\CA$.
Since this then holds for all columns, the automaton $\CA$ acts in a very `sparse' environment, forming large areas of blank cells almost everywhere.

\section{Simulation}
\label{sec:Simu}

In this section, we define the different notions of simulation between cellular automata that we use in the course of this paper.
We begin with the definition of basic simulation, which is somewhat nonstandard, since it is specifically tailored to our needs.

\begin{definition}
Let $\CA : \alp^\Z \to \alp^\Z$ and $\varCA : \varalp^\Z \to \varalp^\Z$ be two cellular automata.
A \emph{simulation} of $\varCA$ by $\CA$ is a quintuple $(Q, U, (a,b), C, \pi)$, where $Q, U \in \N$ are the \emph{dimensions} of the simulation, $(a,b) \in [0,Q-1] \times [0,U-1]$ is the \emph{base coordinate}, $C \subset \alp^{Q \times U}$ is the set of \emph{macro-cells} and $\pi : \Sigma^{Q \times U} \to \varalp$ is the \emph{state function}, such that $\pi(\alp^{Q \times U} \setminus C) = B_\varalp$ and $\tra^+_\varCA \subset \pi(\tra^+_\CA)$ hold, as does $P^b_a \neq B_\alp$ for all $P \in C$.
If such a simulation exists, we say that $\CA$ \emph{$(Q, U)$-simulates} $\varCA$.
The base coordinate will be dropped from the definition if it is unnecessary or clear from the context.

An \emph{amplifier} is a sequence $(\CA_n)_{n \in \N}$ of cellular automata such that for each $n \in \N$, the automaton $\CA_n$ simulates $\CA_{n+1}$.
The \emph{bottom} of the amplifier is the automaton $\CA_0$.
\end{definition}

A simulation is a method of constructing, for each trajectory of $\varCA$, a corresponding trajectory of $\CA$, where the correspondence is given by a function that associates a state of $\varCA$ to every $Q \times U$-rectangle of $\CA$-states.
An amplifier, especially one where the $\CA_n$ are somewhat similar, is a form of \emph{approximate self-simulation}.
The term amplifier is also used in \cite{Ga01} with a similar meaning, while the notion of simulation in said paper is much more general.
Note that the base coordinate $(a,b)$ and the set $C$ of macro-cells are largely irrelevant in the definition.
Next, we define some more refined classes of simulations that depend more heavily on $(a,b)$ and $C$.

\begin{definition}
Let $(Q, U, C, \pi)$ be a simulation of $\varCA : \varalp^\Z \to \varalp^\Z$ by $\CA : \alp^\Z \to \alp^\Z$.
We say that the simulation is \emph{rigid}, if $\pi(\tilde \tra_\CA) \subset \tilde \tra_\varCA$, \emph{weakly rigid}, if $\pi(\tilde \tra_\CA) \subset \hat \tra_\varCA$, and \emph{strongly rigid}, if $\pi(\hat \tra_\CA) \subset \tilde \tra_\varCA$.
\end{definition}

Intuitively, a simulation is rigid if non-blank macro-cells cannot appear from nothing, but must be the result of locally correct simulation by other macro-cells, in nondeterministic two-directional trajectories.
Weak and strong rigidity are technical variations of the concept.
We will only construct simulations that are at least weakly rigid, since they are much easier to reason about than general simulations, where the $\pi$-image of a trajectory of $\CA$ may not be related to $\varCA$ in any way.
In our case, even the trajectories of the nondeterministic CA $\tilde \varCA$ and $\hat \varCA$ will be sparse, and a rigid simulation transfers this sparsity to $\CA$.
In the course of the proof, we construct an amplifier $(\CA_n)_{n \in \N}$ with rigid simulations, where each $\tilde \CA_n$ is sparser than the previous one.
Since this sparsity is transferred to the bottom automaton $\CA_0$ as described above, it is uniquely ergodic.

\begin{definition}
Let $\CA : \alp^\Z \to \alp^Z$ be a CA with local function $\locf$, and let $\delta \in \{-1,0,1\}$.
A state $c \in \alp$ is called \emph{$\delta$-demanding}, if $\locf(c_{-1},c_0,c_1) = c$ implies $c_\delta \neq B_\alp$ for all $c_{-1}, c_0, c_1 \in \alp$.
We assign two directed graphs $G$ and $G'$ to every configuration $\eta \in \alp^{\Z^2}$ as follows.
The vertex set of both of them consists of the non-blank cells of $\eta$.
There is an edge in $G'$ from the cell $\eta^{t-1}_{i+\delta}$ to $\eta^t_i$, where $i, t \in \Z$ and $\delta \in \{-1,0,1\}$, if the cells are not blank, and in $G$ if $\eta^t_i$ is also $\delta$-demanding.

Cells connected by an edge in $G$ ($G'$) are called \emph{(weakly) adjacent}.
Two cells are \emph{(directly) connected} if there is a (directed) path in $G$ between them.
The analogous weak concepts are defined for $G'$.

Let $(Q, U, (a,b), C, \pi)$ be a simulation of $\varCA : \varalp^\Z \to \varalp^\Z$ by $\CA$.
We say that the simulation is \emph{connecting}, if for all trajectories $\eta \in \tilde \tra_\CA$ and all $i, t \in \Z$ and $\delta \in \{-1,0,1\}$ such that $\pi(\eta)^t_i$ and $\pi(\eta)^{t-1}_{i+\delta}$ are adjacent, the base cells $\eta^{tU+b}_{iQ+a}$ and $\eta^{sU+b}_{jQ+a}$ are directly connected.
The simulation is \emph{strongly connecting}, if the above holds for all $ \eta \in \hat \tra_\CA$.
\end{definition}

Intuitively, a connecting simulation respects the existence of directed paths.
See Figure~\ref{fig:Connection} for a visualization of a connecting simulation.
The concepts of adjacent and connected cells are somewhat technical, since two neighboring non-blank cells may not be even indirectly connected, but they simplify our constructions and proofs.
For this reason, they are used more frequently than their more natural weak counterparts.
In particular, connecting simulations with lots of demanding cells enable certain geometric arguments, where we show that the bases of two macro-cells in a trajectory must both be directly connected to some third cell, which implies that the macro-cells are correctly aligned or share some other desired property.

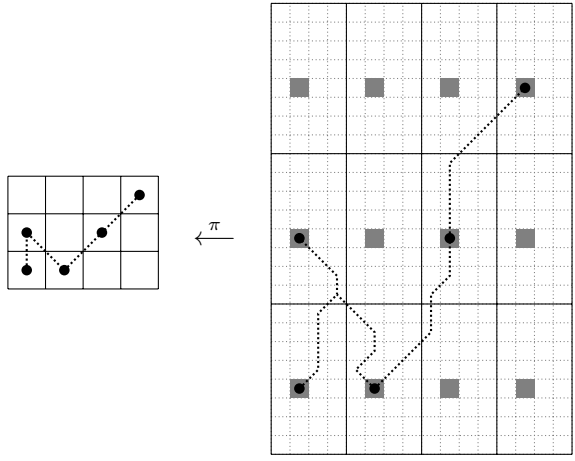
\begin{figure}
\begin{center}
\begin{tikzpicture}

\draw[step=.5] (0,0) grid (2,1.5);

\draw[thick,densely dotted] (.25,.25) -- (.25,.75) -- (.75,.25) -- (1.75,1.25);
\fill (.25,.25) circle (0.07);
\fill (.25,.75) circle (0.07);
\fill (.75,.25) circle (0.07);
\fill (1.25,.75) circle (0.07);
\fill (1.75,1.25) circle (0.07);

\node () at (2.75,.75) {$\stackrel{\pi}{\longleftarrow}$};

\begin{scope}[yscale=2,shift={(3.5,-1.1)}]

\begin{scope}[yscale=.5]
\foreach \x in {0,1,2,3}{
	\foreach \y in {0,1,2}{
		\fill[gray] (\x+.25,2*\y+.75) rectangle ++(.25,.25);
	}
}
\draw[gray,densely dotted,step=.25] (0,0) grid (4,6);
\foreach \x/\y in {0/0,0/1,1/0,2/1,3/2}{
	\fill (\x+.25+.125,2*\y+.75+.125) circle (.07);
}
\begin{scope}[shift={(.125,.125)},xscale=.25,yscale=.25]
	\draw[thick,densely dotted] (1,3) -- ++(1,1) -- ++(0,3) -- ++(1,1) -- ++(0,1) -- ++(-2,2);
	\draw[thick,densely dotted] (5,3) -- ++(-1,1) -- ++(1,1) -- ++(0,1) -- ++(-2,2);
	\draw[thick,densely dotted] (5,3) -- ++(3,3) -- ++(0,2) -- ++(1,1) -- ++(0,2);
	\draw[thick,densely dotted] (9,11) -- ++(0,4) -- ++(4,4);
\end{scope}

\end{scope}

\draw (0,0) grid (4,3);

\end{scope}

\end{tikzpicture}
\end{center}
\caption{A connecting simulation.
The connecting directed paths on the right corresponds to the adjacencies in the $\pi$-image on the right.
The base cells of the macro-cells are shaded.}
\label{fig:Connection}
\end{figure}

We now show that the simulation relation is transitive.

\begin{lemma}
\label{lem:Composition}
Let $\CA : \alp^\Z \to \alp^\Z$, $\varCA : \varalp^\Z \to \varalp^\Z$ and $\vvarCA : \vvaralp^\Z \to \vvaralp^\Z$ be three cellular automata, and suppose that $\CA$ $(Q,U)$-simulates $\varCA$, and $\varCA$ $(Q',U')$-simulates $\vvarCA$.
Then, $\CA$ $(QQ', UU')$-simulates $\vvarCA$.
\end{lemma}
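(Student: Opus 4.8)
The plan is to compose the two simulations by "nesting" the macro-cell structure: if $\CA$ $(Q,U)$-simulates $\varCA$ via $(Q,U,(a,b),C,\pi)$ and $\varCA$ $(Q',U')$-simulates $\vvarCA$ via $(Q',U',(a',b'),C',\pi')$, then a $\vvarCA$-state should correspond to a $Q' \times U'$ block of $\varCA$-states, each of which corresponds to a $Q \times U$ block of $\CA$-states; altogether this is a $QQ' \times UU'$ block of $\CA$-states. Concretely, I would define the composite state function $\pi'' : \alp^{QQ' \times UU'} \to \vvaralp$ by first applying $\pi$ blockwise to turn a $QQ' \times UU'$ pattern over $\alp$ into a $Q' \times U'$ pattern over $\varalp$ (this is exactly the operation $P \mapsto \pi(P)$ defined in Section~\ref{sec:Defs} for rectangular patterns), and then applying $\pi'$ to that; that is, $\pi'' = \pi' \circ \pi$ in the sense of that blockwise-application notation. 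The composite macro-cell set would be $C'' = \{ P \in \alp^{QQ' \times UU'} \mid \pi(P) \in C' \}$, and the composite base coordinate would be $(a'Q + a,\, b'U + b)$, so that the base cell of a composite macro-cell is the base cell (under $(a,b)$) of the base macro-cell (under $(a',b')$) of the $\pi$-image.

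The verification then has three routine parts. First, $\pi''(\alp^{QQ' \times UU'} \setminus C'') = B_{\vvaralp}$: if $\pi(P) \notin C'$ then by definition of simulation $\pi'(\pi(P)) = B_{\vvaralp}$. Second, the base-cell condition $P^{b'U+b}_{a'Q+a} \neq B_\alp$ for $P \in C''$: since $\pi(P) \in C'$, its base macro-cell $\pi(P)^{b'}_{a'} \neq B_\varalp$, so the corresponding $Q\times U$ sub-pattern of $P$ lies in $C$, hence its base cell — which sits at position $(a'Q+a, b'U+b)$ in $P$ — is non-blank. Third, and this is the heart of the argument, the trajectory inclusion $\tra^+_{\vvarCA} \subset \pi''(\tra^+_\CA)$. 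Given $\zeta \in \tra^+_{\vvarCA}$, the inclusion $\tra^+_{\vvarCA} \subset \pi'(\tra^+_\varCA)$ gives some $\xi \in \tra^+_\varCA$ with $\pi'(\xi) = \zeta$, and then $\tra^+_\varCA \subset \pi(\tra^+_\CA)$ gives some $\eta \in \tra^+_\CA$ with $\pi(\eta) = \xi$; unwinding the blockwise definitions, $\pi''(\eta) = \pi'(\pi(\eta)) = \pi'(\xi) = \zeta$, so $\zeta \in \pi''(\tra^+_\CA)$ as required.

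The main thing to be careful about — the one genuine obstacle rather than bookkeeping — is the interaction of the two base coordinates and the verification that "$\pi$ applied blockwise to a trajectory of $\CA$ is a trajectory of $\varCA$" composes correctly with the vertical dimensions $U, U'$: one must check that a $UU'$-step of $\CA$ indeed corresponds to a $U'$-step of $\varCA$ at the level of $\pi$-images, which it does precisely because $\pi(\tra^+_\CA)$ consists of $\varCA$-trajectories where one $\varCA$-row is produced per $U$ rows of $\CA$, and this is exactly the semantics built into the notation $\pi(\eta)$. I should also note that the definition of simulation as stated only constrains $\tra^+$ (one-directional trajectories), so no rigidity or connecting property is claimed for the composite here; those refinements, if needed, would require separately composing the rigidity/connecting hypotheses, but the statement of this lemma asks only for plain $(QQ',UU')$-simulation, so the argument above suffices.
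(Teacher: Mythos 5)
Your proposal is correct and follows essentially the same route as the paper: the same composite macro-cell set $C'' = \pi^{-1}(C')$, the same composed state function $\pi'' = \pi' \circ \pi$, and the same base coordinate $(a + Qa',\, b + Ub')$; the paper merely asserts the verification is easy, while you spell out the three conditions, all correctly.
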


\begin{proof}
Denote by $S = (Q,U,(a,b),C,\pi)$ the simulation of $\varCA$ by $\CA$, and by $S' = (Q',U',(a',b'),C',\pi')$ the simulation of $\vvarCA$ by $\varCA$.
We define the set $C'' \subset \alp^{QQ' \times UU'}$ of macro-cells for the composition simulation by $C'' = \pi^{-1}(C')$, that is, a pattern $P \in \alp^{QQ' \times UU'}$ is in $C''$ if and only if $\pi(P) \in C'$.
The state function $\pi'' : \alp^{QQ' \times UU'} \to \vvaralp$ is the composition of $\pi'$ and $\pi$.
Define also $v = (a + Qa', b + Ub')$.
Then $(QQ',UU',v,C'',\pi'')$ is easily seen to be a simulation of $\vvarCA$ by $\CA$.
\end{proof}

The simulation $(QQ',UU',v,C'',\pi'')$ of $\vvarCA$ by $\CA$ constructed above is called the \emph{composition} of $S$ and $S'$, denoted $S' \circ S$.
The order of composition here is chosen to correspond to the composition of the state functions.

\begin{lemma}
\label{lem:CompProps}
Retain the assumptions and notation of Lemma~\ref{lem:Composition}.
If $S$ and $S'$ are rigid and connecting, then so is $S' \circ S$.
If $S$ is weakly rigid and connecting, and $S'$ is strongly rigid and strongly connecting, then $S' \circ S$ is again rigid and connecting.
\end{lemma}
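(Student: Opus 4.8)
The plan is to reduce everything to two ingredients: the factorization $\pi'' = \pi' \circ \pi$ of the composed state function together with the decomposition $v = (a + Qa',\, b + Ub')$ of the composed base coordinate, both of which come straight from the construction of $S' \circ S$ in the proof of Lemma~\ref{lem:Composition}; and a single \emph{path-lifting} observation for connecting simulations. The path-lifting observation is: if $S$ is connecting and $\eta \in \tilde\tra_\CA$, and two cells $\pi(\eta)^\sigma_\ell$ and $\pi(\eta)^{\sigma'}_{\ell'}$ are directly connected in $G$ of $\pi(\eta)$, then the base cells $\eta^{\sigma U + b}_{\ell Q + a}$ and $\eta^{\sigma' U + b}_{\ell' Q + a}$ are directly connected in $G$ of $\eta$. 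To see this, take a directed path in $G$ of $\pi(\eta)$ witnessing the first connection; each edge of it joins cells of the form $\pi(\eta)^{\tau-1}_{j+\epsilon}$ and $\pi(\eta)^\tau_j$, so by the connecting property of $S$ applied to $\eta$ the corresponding base cells $\eta^{(\tau-1)U+b}_{(j+\epsilon)Q+a}$ and $\eta^{\tau U+b}_{jQ+a}$ are directly connected in $G$ of $\eta$; concatenating these directed paths — which is legitimate since edges of $G$ always run forward in time — yields a directed path between the base cells of the two endpoints. Note that this step only ever feeds $\eta \in \tilde\tra_\CA$ into $S$, which is why $S$ is required only to be connecting (not strongly connecting) in both cases of the statement.

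With this in hand both cases are short. \emph{Rigidity of the composition:} for $\eta \in \tilde\tra_\CA$, rigidity (resp.\ weak rigidity) of $S$ places $\pi(\eta)$ in $\tilde\tra_\varCA$ (resp.\ $\hat\tra_\varCA$), and then rigidity (resp.\ strong rigidity) of $S'$ places $\pi''(\eta) = \pi'(\pi(\eta))$ in $\tilde\tra_\vvarCA$; so $S' \circ S$ is rigid in both cases. \emph{Connecting property:} let $\eta \in \tilde\tra_\CA$ and suppose $\pi''(\eta)^t_i$ and $\pi''(\eta)^{t-1}_{i+\delta}$ are adjacent. Since $\pi''(\eta) = \pi'(\pi(\eta))$ and $\pi(\eta)$ lies in $\tilde\tra_\varCA$ in case~1 (resp.\ $\hat\tra_\varCA$ in case~2), the connecting property of $S'$ (resp.\ the strong connecting property of $S'$) applies and gives that the two $S'$-base cells of $\pi(\eta)$, namely the cells $\pi(\eta)^{tU'+b'}_{iQ'+a'}$ and $\pi(\eta)^{(t-1)U'+b'}_{(i+\delta)Q'+a'}$, are directly connected in $G$ of $\pi(\eta)$. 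The path-lifting observation transports this to a direct connection in $G$ of $\eta$ between $\eta^{(tU'+b')U+b}_{(iQ'+a')Q+a}$ and $\eta^{((t-1)U'+b')U+b}_{((i+\delta)Q'+a')Q+a}$, and expanding $(tU'+b')U + b = t(UU') + (b+Ub')$ and $(iQ'+a')Q + a = i(QQ') + (a+Qa')$ identifies these with the base cells of $S' \circ S$ at macro-positions $(i,t)$ and $(i+\delta,t-1)$. Hence $S' \circ S$ is connecting.

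I expect no conceptual difficulty; the work is essentially bookkeeping, and the one place to be careful is precisely the interface between the two simulations: checking that the "$S'$-base cell of $\pi(\eta)$ at macro-position $(i,t)$" is literally the cell $\pi(\eta)^{tU'+b'}_{iQ'+a'}$, keeping the orientation of "directly connected" consistent (all edges of $G$ and $G'$ run forward in time, so the concatenation of the per-edge directed paths is again a directed path), and verifying that the iterated base-coordinate arithmetic collapses exactly to the coordinate $v$ fixed in Lemma~\ref{lem:Composition}. The asymmetry of the hypotheses in case~2 — $S$ only weakly rigid and connecting, $S'$ strongly rigid and strongly connecting — is forced precisely because weak rigidity of $S$ can only certify $\pi(\eta) \in \hat\tra_\varCA$, so the outer simulation $S'$ must tolerate the larger class $\hat\tra_\varCA$ on its input side, both for the rigidity and for the connecting conclusion.
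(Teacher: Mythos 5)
Your proposal is correct and follows essentially the same route as the paper's proof: factor $\pi'' = \pi' \circ \pi$ for rigidity, apply the (strong) connecting property of $S'$ to get a directed path between $S'$-bases in $\pi(\eta)$, then lift that path edge-by-edge through the connecting property of $S$ and concatenate — which is exactly the paper's argument, with your "path-lifting observation" just isolating its middle step. The remark that $S$ only ever receives $\eta \in \tilde\tra_\CA$, so plain connectivity of $S$ suffices in both cases, matches the paper's "completely analogous" treatment of the second claim.
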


\begin{proof}
First, if $S$ and $S'$ are rigid, we have $\pi''(\tilde \tra_\CA) = \pi'(\pi(\tilde \tra_\CA)) \subset \pi'(\tilde \tra_\varCA) \subset \tilde \tra_\vvarCA$, and then the composition is rigid.

Let $S$ and $S'$ be also connecting, and let $\eta \in \tilde \tra_\CA$.
If two cells $\pi'(\pi(\eta))^t_i$ and $\pi'(\pi(\eta))^s_j$ are adjacent, the bases $\pi(\eta)^{t U' + b'}_{i Q' + a'}$ and $\pi(\eta)^{s U' + b'}_{j Q' + a'}$ of the macro-cells $\pi(\eta)^{[t U', (t+1)U'-1]}_{[i Q', (i+1)Q'-1]}$ and $\pi(\eta)^{[s U', (s+1)U'-1]}_{[j Q', (j+1)Q'-1]}$ of $S'$ are directly connected, since $S'$ is connecting.
The directed path $(\pi(\eta)^{r_m}_{k_m})_{m=0}^M$ between the bases, where $(k_0,r_0) = (i Q' + a', t U' + b')$ and $(k_M,r_M) = (j Q' + a', s U' + b')$, consists of adjacent cells, and since $\pi(\eta) \in \tilde \tra_\varCA$ by rigidity and $S$ is connecting, the bases $\eta^{r_m U + b}_{k_m Q + a}$ and $\eta^{r_{m+1} U + b}_{k_{m+1} Q + a}$ of the macro-cells $\eta^{[r_m U,(r_m+1)U-1]}_{[k_m Q,(k_m+1)Q-1]}$ and $\eta^{[r_{m+1} U,(r_{m+1}+1)U-1]}_{[k_{m+1} Q,(k_{m+1}+1)Q-1]}$ of $S$ are directly connected for all $m \in [0,M-1]$.
But then the bases $\eta^{(t U' + b') U + b}_{(i Q' + a') Q + a}$ and $\eta^{(s U' + b') U + b}_{(j Q' + a') Q + a}$ of the macro-cells $\eta^{[t U U', (t+1)U U'-1]}_{[i Q Q', (i+1)Q Q'-1]}$ and $\eta^{[s U U', (s+1)U U'-1]}_{[j Q Q', (j+1)Q Q'-1]}$ of $S' \circ S$ are directly connected, and thus the composition is connecting.

The second claim is completely analogous, simply replace $\tilde \tra_\varCA$ by $\hat \tra_\varCA$ in the above.
\end{proof}

\section{Cellular Automata Represented by Programs}
\label{sec:Interpreter}

We now introduce a programming language for expressing CA rules.
As in \cite{Ga01}, we use it both for precisely describing complicated CA rules, and for encoding said rules into short bitstrings which can then be interpreted by Turing machines, in order to perform programmatic simulation.
The language describes automata whose states are binary vectors divided into fields, as defined in Section~\ref{sec:Defs}.
We do not rigorously define this language, but go through its relevant aspects using an example automaton, presented as a program in Algorithm~\ref{alg:Example}.
The program presented here is designed for illustrative purposes, and does not exhibit much interesting behavior.
As a convention, we only display line numbers in algorithms if we explicitly refer to them in the main text.

\begin{algorithm}[htp]
\caption{An example program of a cellular automaton.}\label{alg:Example}
\begin{algorithmic}[1]

\State $\mbf{num\ param}\ \mtt{Modulus} = 10$ \label{ln:Param}
\State $\mbf{num\ field}\ \mtt{Counter} \leq \mtt{Modulus} - 1$ \label{ln:Field}
\State
\Procedure{Proc}{$n$} \label{ln:Maindef}
	\If{$\mtt{Counter}^- = n$} \label{ln:Maincond}
		\State $\mtt{Counter} \gets \mtt{Counter} + 1 \bmod \mtt{Modulus}$ \label{ln:Mainset}
	\EndIf
\EndProcedure
\State
\If{$\mtt{Counter} \neq \mtt{Counter}^+$} \label{ln:Bodycond}
	\State \Call{Proc}{$\mtt{Counter}^+$} \label{ln:Maincall}
\Else
	\State \Call{Proc}{$0$} \label{ln:Maincall2}
\EndIf

\end{algorithmic}
\end{algorithm}

A program of the language consists of zero or more \emph{parameter and field definitions} in any order, followed by one or more \emph{procedure definitions} and finally the \emph{program body}.
On line \algref{alg:Example}{ln:Param}, an integer parameter called \texttt{Modulus} is defined.
Parameters should be viewed as constants, as their value is the same in all cells of the automaton, at all times.

Likewise, line \algref{alg:Example}{ln:Field} defines an integer field with maximum value $\mtt{Modulus} - 1 = 9$ (stored as a bitstring of length $\lceil \log_2 9 \rceil = 4$).
As opposed to parameters, fields make up the state of a cell, and their value typically changes as the cellular automaton is applied.
In this example, the state set of the automaton is $\{0, 1\}^4$, and it consists of this single field.
Fields and parameters can also contain Boolean truth values (stored as a single bit, with $0 = \mtt{False}$ and $1 = \mtt{True}$), bitstrings of some fixed length, or enumerations, which are simply integers with special labels.
For technical reasons that we explain later in this section, the length of a bitstring field must be given in unary notation.

Line \algref{alg:Example}{ln:Maindef} marks the beginning of a procedure definition.
The name of the procedure is \textsc{Proc}, and it takes one argument, $n$.
Line \algref{alg:Example}{ln:Maincond} marks the beginning of a conditional block, which is executed if the value of the implicitly defined parameter $\mtt{Counter}^-$, which contains the value of the $\mtt{Counter}$ field of the left neighbor, is exactly $n$.
Its counterpart $\mtt{Counter}^+$, containing the value of the $\mtt{Counter}$ field of the left neighbor, is referenced in line \algref{alg:Example}{ln:Bodycond}.
Only fields can be assigned new values; parameters are read-only, as are the fields of the neighboring cells.
On line \algref{alg:Example}{ln:Mainset}, the $\mtt{Counter}$ field of the cell is assigned a new value: it is incremented by one modulo the parameter $\mtt{Modulus}$.

The body of the program contains another conditional block, with two calls to the procedure \textsc{Proc} on lines \algref{alg:Example}{ln:Maincall} and \algref{alg:Example}{ln:Maincall2}, with the respective arguments $\mtt{Counter}^+$ and 0.
The transition rule of the CA consists of executing the program on every cell of the configuration.
In this case, the program encodes (in a somewhat convoluted way) the local rule
\[ (a,b,c) \mapsto \left\{ \begin{array}{cl}
	(b + 1) \bmod 10, & \mbox{if~} b = c = 0 \mbox{~or~} b \neq c = a \\
	b, & \mbox{otherwise,}
\end{array} \right. \]
where $a, b, c \in \{0, \ldots, 2^4-1 = 15\}$.

The programming language also contains a special keyword \texttt{This}.
It is handled as an implicitly defined bitstring parameter, whose value is \emph{the whole program}, using some suitable encoding.
In this example, the value of \texttt{This} would be the whole of Algorithm~\ref{alg:Example}, encoded as a bitstring.
It gives every program written in the language the ability to easily examine itself, and is mainly used for self-simulation.
We remark here that there are multiple ways of implementing programmatic self-simulation, but all of them include some `trick' that gives the cellular automaton access to its own definition.
We have chosen the approach of allowing our programs to explicitly refer to themselves, with the goal of being as easy to understand as possible.

This example contains most of the basic constructs of the programming language.
Procedure calls can appear in the bodies of other procedures, but recursion is explicitly prohibited, so there may not be a cycle of procedures with each calling the next.
The language can be assumed to contain any particular polynomial-time arithmetic and string manipulation functions we need, the latter of which can be used also on numerical fields.
It is clear that all cellular automata can be described by a program, if only by listing their entire transition table in a series of nested conditional blocks.
Conversely, we say a bitstring $p$ is a \emph{valid program} if it defines a cellular automaton.

We may now construct a Turing machine $\intp$, the \emph{interpreter}, that takes as input four binary words $r,s,t,p$ and outputs $\phi(r,s,t)$, where $\phi : \alp^3 \to \alp$ is the local function of the automaton $\CA : \alp^\Z \to \alp^\Z$ described by the valid program $p$.
In this case, we denote $\phi = \intp_p$, $\CA = \intp_p^\infty$ and $\size{\alp} = \size{p}$ (not to be confused with the length of the program $p$ as a bitstring, which is denoted by $|p|$).
Since we explicitly prohibited recursion, no form of looping is available, the length of each field is proportional to the length of its definition (here we need the unary encoding of bitstring lengths), and all functions run in polynomial time, it is clear that $\intp$ has time complexity $P^T_\intp(|p|, \size{p})$ and space complexity $P^S_\intp(|p|, \size{p})$, where $P^T_\intp$ and $P^S_\intp$ are some polynomials in two variables with integer coefficients.

Finally, we define CA transformations, which are functions that modify programmatically defined cellular automata.

\begin{definition}
Let $G : \N \times \{0, 1\}^* \to \{0, 1\}^*$ be a function.
We say that $G$ is a \emph{polynomial CA transformation}, if the following conditions hold.
\begin{itemize}
	\item $G(n, p)$ is computable in time polynomial in $n$ and $|p|$.
	\item If $p$ is a valid program, then so is $G(n, p)$ for all $n \in \N$.
	\item If $p$ is a valid program, then $\size{G(n, p)}$ is polynomial in $n$ and $\size{p}$.
\end{itemize}
If the automaton $\intp^\infty_{G(n,p)}$ only depends on $\intp^\infty_p$ for all $n \in \N$ and all valid programs $p$, we say $G$ is \emph{consistent}, and write $G(n,\CA)$ for the automaton $\intp^\infty_{G(n,p)}$, if $\CA$ is defined by the program $p$.
If $G$ and $G'$ are two polynomial CA transformations, so is the function $(n,p) \mapsto G(n, G'(n, p)))$, which is called their \emph{composition} and denoted $G \circ G'$.
\end{definition}

\section{Sparsification}
\label{sec:Sparse}

\subsection{The Sparsification Transformation}
\label{sec:SparseDef}

In this section, we define a specific polynomial CA transformation $G_s$ that performs \emph{sparsification}, and for that, let $\varCA = \intp^\infty_p : \varalp^\Z \to \varalp^\Z$ be a cellular automaton defined by the program $p \in \{0,1\}^*$, and let $n \in \N$ be arbitrary.
Intuitively, the CA defined by $G_s(n, p)$ behaves exactly like $\varCA$, except that all of its trajectories will eventually become \emph{sparse}, that is, they consist mostly of cells in the blank state.
In fact, the transformed automaton $\intp^\infty_{G_s(n, p)}$ simulates $\varCA$ with macro-cells that consist mostly of blank cells.

The transformation $G_s$ works as follows.
First, split the program $p$ into two parts $p_{\mathrm{def}}$ and $p_{\mathrm{body}}$, the first of which contains all the parameter, field and procedure definitions, and the second the main program body.
For a fixed $n \in \N$, the output of $G_s(n, p)$ is then Algorithm~\ref{alg:Sparsified}, along with the definitions of the procedures \textsc{Die}, \textsc{Blank}, \textsc{InputTriple}, \textsc{CopyRight} and \textsc{CopyLeft}.
If the original program happens to contain a parameter, field or procedure whose name clashes with the ones defined here, it will be renamed, together with all references to it.
It is clear that the conditions for being a consistent polynomial CA transformation are fulfilled.

\begin{algorithm}[htp]
\caption{The sparsified program $G_s(n, p)$, minus the new procedures.}\label{alg:Sparsified}
\begin{algorithmic}[1]

\State $\mbf{num\    param}\ \mtt{N} = n$ \Comment{The level of sparsification}
\State $\mbf{bool\   field}\ \live$ \Comment{Marks cells that are not blank}
\State $\mbf{enum\   field}\ \kind \in \{ {\Leftarrow}, {\Rightarrow}, {\Leftrightarrow} \}$ \Comment{Direction of movement}
\State $\mbf{num\    field}\ \cou \leq 2 \,\mtt{N}$ \Comment{Synchronizes the simulation}
\State $p_{\mathrm{def}}$ \Comment{The parameter, field and procedure definitions of $p$}
\State
\If{\Call{InputTriple}{}}
	\State $p_{\mathrm{body}}$ \label{ln:SparseCompute} \Comment{Execute the transition function defined by $p$}
	\If{\Call{Blank}{}}
		\Call{Die}{}
	\Else
		\State $\live \gets \mtt{True}$
		\State $\cou \gets 0$
		\State $\kind \gets {\Rightarrow}$
	\EndIf
\ElsIf{$\neg \live \wedge \neg \live^+ \wedge \kind^- = {\Rightarrow} \wedge \cou^- < 2\,\mtt{N}$}
	\State \Call{CopyLeft}{${\Rightarrow}$}
\ElsIf{$\kind = {\Leftarrow} \wedge \neg \live^+ \wedge \kind^- = {\Rightarrow} \wedge \cou^- = \cou = \mtt{N}$}
	\State \Call{CopyLeft}{${\Rightarrow}$} \label{ln:SparseCrossRight}
\ElsIf{$\neg \live \wedge \neg \live^- \wedge \kind^+ = {\Rightarrow} \wedge \cou^+ = 0$}
	\State \Call{CopyRight}{${\Leftarrow}$} \label{ln:SparseLeft}
\ElsIf{$\neg \live \wedge \neg \live^- \wedge \kind^+ = {\Leftarrow} \wedge 0 < \cou^+ < 2\,\mtt{N}$}
	\State \Call{CopyRight}{${\Leftarrow}$}
\ElsIf{$\kind = {\Rightarrow} \wedge \neg \live^- \wedge \kind^+ = {\Leftarrow} \wedge \cou^+ = \cou = \mtt{N}$}
	\State \Call{CopyRight}{${\Leftarrow}$} \label{ln:SparseCrossLeft}
\ElsIf{$\kind = {\Leftrightarrow} \wedge \neg \live^- \wedge \kind^+ = {\Leftarrow} \wedge \mtt{N} < \cou^+ = \cou < 2\,\mtt{N}$}
	\State \Call{CopyRight}{${\Leftarrow}$}
\ElsIf{$\neg \live \wedge \neg \live^- \wedge \kind^+ = {\Rightarrow} \wedge \cou^+ = \mtt{N}$}
	\State \Call{CopyRight}{${\Leftrightarrow}$} \label{ln:SparseReturn}
\ElsIf{$\neg \live \wedge \neg \live^- \wedge \kind^+ = {\Leftrightarrow} \wedge \mtt{N} < \cou^+ < 2\,\mtt{N}$}
	\State \Call{CopyRight}{${\Leftrightarrow}$}
\Else \,\Call{Die}{}
\EndIf

\end{algorithmic}
\end{algorithm}

We now explain the new fields and their functionality in the sparsified automaton.
Denote by $\alp$ the state set of $\CA = \intp^\infty_{G_s(n, p)}$.
The boolean field $\live$, called the \emph{live flag}, is used to indicate whether a cell is in the blank state $B_\alp$ ($\live = \mtt{False}$) or not ($\live = \mtt{True}$).
The field $\kind$ specifies the \emph{kind} of a non-blank cell.
As there are three possible values for the field, so are there three possible kinds: \emph{left moving}, \emph{right moving} and \emph{returning}, corresponding to the respective values ${\Leftarrow}$, ${\Rightarrow}$ and ${\Leftrightarrow}$.
The cells of different kinds travel back and forth (returning cells travel to the left), exchanging information with each other in order to simulate $\varCA$.
The field $\cou$, called the \emph{counter}, is used to synchronize the simulation.
Its maximum value is twice the parameter $\mtt{N} = n$.

The procedure definitions and program body of $G_s(n, p)$ are independent of $n$.
The procedure \textsc{Die} makes the current cell assume the blank state $B_\alp = 0^{\size{\alp}}$.
The procedure \textsc{Blank} returns $\mtt{True}$ if and only if the $\varalp$-state of the current cell is $B_\varalp$.
The procedure \textsc{InputTriple} returns $\mtt{True}$ if and only if the cells in the neighborhood, if they are live, have counter value $2\,\mtt{N}$, their kinds (if live) are ${\Rightarrow}$, ${\Leftrightarrow}$ and ${\Leftarrow}$, and at least one of them is live.
Finally, \textsc{CopyRight}, when given an argument $k \in \{ {\Leftarrow}, {\Rightarrow}, {\Leftrightarrow} \}$, makes the current cell live, sets its kind to $k$, copies the counter value and $\varalp$-state of the right neighbor, and then increments the counter value by one.
The behavior of \textsc{CopyLeft} is symmetric.

A correct simulation of $\CA$ proceeds as follows.
In the beginning of what is called a \emph{cycle} of $2n + 1$ steps, a right moving cell with counter value $0$ is placed at the coordinate $i(2n+1)$ for every $i \in \N$, with blank cells between them.
In general, at each timestep $t \in \N$, every live cell will have counter value $t \bmod 2\,\mtt{N}+1$.
The right moving cells create a left moving cell on their left, which has the same $\alp$-state as they do (line~\algref{alg:Sparsified}{ln:SparseLeft}).
These cells move in their respective directions until their counter values reach $2\,\mtt{N}$, and the right and left cells originating from neighboring segments may cross at $t = \mtt{N}$ (lines~\algref{alg:Sparsified}{ln:SparseCrossRight} and~\algref{alg:Sparsified}{ln:SparseCrossLeft}).
At that time, the right moving cells also create a returning cell on their left (line~\algref{alg:Sparsified}{ln:SparseReturn}), which starts moving left along the left moving cell.
At $t = 2\,\mtt{N}$, each coordinate $i(2n+1)$ contains a returning cell with the $\alp$-state it had at the beginning of the cycle, and is surrounded by a right-moving and left-moving cell, containing the $\alp$-states of its left and right neighbors, respectively.
The procedure \textsc{InputTriple} returns $\mtt{True}$ for the returning cells, and these three-cell groups are used to calculate new $\varalp$-states for the cells, as per the rule of $\varCA$ (line~\algref{alg:Sparsified}{ln:SparseCompute}).
Note that if some of the cells in the neighborhood are blank, the computation on line~\algref{alg:Sparsified}{ln:SparseCompute} will be carried out as if they were live cells with $\varalp$-state $B_\varalp$.
Also, if the $\varalp$-state of the resulting cell is $B_\varalp$, it will die.
Thus the blank state $B_\varalp$ is simulated by the absence of a live cell.

See Figure~\ref{fig:SparsifiedXOR} for a sparsification of the three-neighbor XOR automaton, which should clarify the above explanation.


\begin{figure}[htp]
\begin{center}
\begin{tikzpicture}[xscale=2,yscale=2]

\begin{scope}\clip (1,0) rectangle (6,3);
\foreach \x/\y in {0/0,1/0,3/0,3/2,4/0,4/1,4/2,5/0,5/2,6/2}{
\begin{scope}[shift={(\x,\y)}]
\begin{scope}[xscale={1/9},yscale={1/9}]
\fill (4,0) rectangle ++(1,1);
\foreach \y in {1,...,8}{
	\filldraw[fill=gray!50] (4+\y,\y) rectangle ++(1,1);
	\fill (5+\y,\y+1/2) --
	      (4+\y,\y+3/4) --
	      (4+\y,\y+1/4) -- cycle;
	\filldraw[fill=gray!50] (4-\y,\y) rectangle ++(1,1);
	\fill (4-\y,\y+1/2) --
	      (5-\y,\y+3/4) --
	      (5-\y,\y+1/4) -- cycle;
}
\foreach \y in {1,...,4}{
	\filldraw[fill=gray!50] (8-\y,4+\y) rectangle ++(1,1);
	\fill (8-\y+1/2,5+\y) --
	      (8-\y+1/4,4+\y) --
	      (8-\y+3/4,4+\y) -- cycle;
}
\end{scope}

\end{scope}
}
\foreach \x/\y in {0/1,0/2,1/1,1/2,2/0,2/1,2/2,3/1,5/1,6/0,6/1}{
\begin{scope}[shift={(\x,\y)}]
\begin{scope}[xscale={1/9},yscale={1/9}]
\draw (4,0) rectangle ++(1,1);
\end{scope}

\end{scope}
}
\end{scope}\draw (1,0) rectangle (6,3);
\end{tikzpicture}
\end{center}
\caption{The sparsification transformation applied to the three-neighbor XOR automaton, with $n = 4$. The cells with left, right and up arrows are left moving, right moving and returning, respectively. The black cells are right moving with $\cou = 0$, and white cells are blank. The live cells on each row have the same counter value.}
\label{fig:SparsifiedXOR}
\end{figure}

We also slightly generalize the sparsification construction by defining partial sparsifications.

\begin{definition}
A \emph{partial sparsification transformation} is a polynomial CA transformation $G_s^N$, where $N \subseteq \N$ is decidable in polynomial time, defined by
\[ G_s^N(n, p) = \left\{ \begin{array}{cl} G_s(n, p), & \mbox{if $n \in N$} \\ p, & \mbox{otherwise.} \end{array} \right. \]
\end{definition}

Clearly, each $G^N_s$ is also consistent.
The set $N$ is called the \emph{effective set} of $G^N_s$.
Partial sparsification transformations are used in the computability results of Section~\ref{sec:Computability}, where classes of cellular automata are separated by whether $N$ is finite or infinite.

\subsection{Properties of Sparsification}
\label{sec:SparseProp}

We now prove some basic properties of the sparsification transformation, keeping the notation of Section~\ref{sec:SparseDef}.
For the rest of this section, fix $n \in \N$ and denote $M = 2n+1$.
First, there is the claim that the sparsified automaton actually simulates the original.

\begin{lemma}
\label{lem:SparseSim}
The CA $\CA = \intp^\infty_{G_s(n,p)}$ $(M,M)$-simulates $\varCA$.
\end{lemma}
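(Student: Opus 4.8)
The plan is to exhibit an explicit simulation quintuple $(M, M, (0, M-1), C, \pi)$ and verify the two defining conditions $\pi(\alp^{M \times M} \setminus C) = B_\varalp$ and $\tra^+_\varCA \subset \pi(\tra^+_\CA)$, together with $P^{M-1}_0 \ne B_\alp$ for all $P \in C$. First I would define the state function $\pi : \alp^{M \times M} \to \varalp$ by reading off the $\varalp$-state stored in the base cell: given an $M \times M$ rectangle $P$, look at the cell $P^{M-1}_0$ (the bottom-left corner in the time-upward convention, i.e.\ the cell that should be the returning cell at the end of a cycle), and if that cell is live with $\kind = {\Leftrightarrow}$ and $\cou = 2n$, output its $\varalp$-state (the value of the fields inherited from $p_{\mathrm{def}}$); otherwise output $B_\varalp$. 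Then $C$ is the set of rectangles $P$ for which this base cell is such a returning cell, which immediately gives $P^{M-1}_0 \ne B_\alp$ for $P \in C$ and $\pi(\alp^{M\times M}\setminus C) = B_\varalp$ by construction.

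The substantive step is $\tra^+_\varCA \subset \pi(\tra^+_\CA)$: given a one-directional trajectory $\xi \in \tra^+_\varCA$, I must build a trajectory $\eta \in \tra^+_\CA$ with $\pi(\eta) = \xi$. The idea is to realize the ``correct simulation'' described in the text around Figure~\ref{fig:SparsifiedXOR}. Concretely, for each macro-time-step $t$, the macro-row $\pi(\eta)^t$ should equal $\xi^t$; inside the corresponding band of $M$ physical rows I lay down the cycle dynamics: at physical time $tM$ (local counter $0$) each coordinate $iM$ carries a right-moving cell holding $\xi^t_i$, flanked by blank cells; the right-movers spawn left-movers (line~\algref{alg:Sparsified}{ln:SparseLeft}), both propagate, the right-mover from segment $i$ and the left-mover from segment $i+1$ cross at local time $n$ (lines~\algref{alg:Sparsified}{ln:SparseCrossRight},~\algref{alg:Sparsified}{ln:SparseCrossLeft}), a returning cell is spawned at the crossing (line~\algref{alg:Sparsified}{ln:SparseReturn}) and travels left along the left-mover, so that at local time $2n$ the coordinate $iM$ holds a returning cell with $\cou = 2n$ whose $\varalp$-state is $\xi^t_i$, with its left neighbour a right-mover holding $\xi^t_{i-1}$ and its right neighbour a left-mover holding $\xi^t_{i+1}$. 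Then \textsc{InputTriple} fires there, \textsc{Blank} and the body $p_{\mathrm{body}}$ compute $\varlocf_{\varCA}(\xi^t_{i-1}, \xi^t_i, \xi^t_{i+1}) = \xi^{t+1}_i$ (using that absent cells count as $B_\varalp$), and the cell is reset to a right-mover with $\cou = 0$ holding $\xi^{t+1}_i$, starting the next cycle. I would write this down as an explicit formula for $\eta^s_j$ in terms of $s \bmod M$, $j \bmod M$, and the entries $\xi^{\lfloor s/M \rfloor}_*$, and then check that each physical cell's update is exactly the branch of Algorithm~\ref{alg:Sparsified} that the construction intends — this is a finite case analysis over the value of $s \bmod M$ and the position of $j$ relative to its segment boundary.

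The main obstacle, and the only place requiring genuine care rather than bookkeeping, is verifying that at every physical cell the guard conditions of Algorithm~\ref{alg:Sparsified} select the intended branch and no other — in particular that the moving cells do not trigger spurious \textsc{Die} calls at segment boundaries, that the crossing at local time $n$ is handled by the dedicated cross branches without the plain propagation branches also matching, and that the $\cou$ and $\kind$ values of the neighbours are exactly what the guards demand at each of the $M$ local times. I would organize this by treating local times $0$, $1 \le t \le n-1$, $t = n$, $n+1 \le t \le 2n-1$, and $t = 2n$ separately, in each case listing the relevant cells (live right-mover, live left-mover, live returning cell, blank) and their neighbours' fields, and matching against the branch guards. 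Once this case check is complete, $\pi(\eta) = \xi$ is immediate from the definition of $\pi$ applied to the base cells (the returning cells at local time $2n$), and the three simulation conditions have all been verified, completing the proof.
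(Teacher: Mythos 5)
Your proposal is correct and follows essentially the same route as the paper: exhibit the explicit cycle structure of the sparsified automaton as a trajectory of $\CA$ lifting a given trajectory of $\varCA$, and verify by induction/case analysis over the local time that the branch guards of Algorithm~\ref{alg:Sparsified} realize it. The only difference is cosmetic: you take the base cell to be the returning cell with $\cou = 2n$ at local time $M-1$, whereas the paper uses base coordinate $(0,0)$ and the right-moving cell with $\cou = 0$ (a choice that is later exploited in the rigidity and connectivity lemmas, but either choice suffices for the statement at hand).
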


\begin{proof}
We define the set $C \subset \alp^{M^2}$ of macro-cells as the set of those rectangular patterns $P \in \alp^{M^2}$ that satisfy $\live(P^0_0) = \mtt{True}$, $\kind(P^0_0) = \mtt{Right}$ and $\cou(P^0_0) = 0$.
These are the black cells in Figure~\ref{fig:SparsifiedXOR}.
For such a pattern $P$, define $\pi(P)$ simply as the $\alp$-state of $P^0_0$.
If $P \notin C$, we of course define $\pi(P) = B_\alp$.
We now claim that $(M,M,(0,0),C,\pi)$ is the desired simulation.

Let thus $\eta \in \tra^+_\varCA$ be a trajectory of $\varCA$, and define $\xi \in \tra^+_\CA$ by $\xi^0_{[Qi,Q(i+1)-1]} = c^t_i B_\alp^{2n}$, where $c^t_i \in \alp$ is either a live right moving cell with counter value $0$ and $\varalp$-state $\eta^0_i$ if $\eta^t_i \neq B_\varalp$, or $B_\alp$ if $\eta^t_i = B_\varalp$.
Every other row of $\xi$ is of course the $\CA$-image of the previous row.
Then we have $\pi(\xi)^0 = \eta^0$ by definition of $\pi$.
The general case of $\pi(\xi)^t = \eta^t$ for all $t \in \N$ is easily verified by induction, using the rules of $\CA$ and the explanation of the simulation in the previous subsection.
\end{proof}

The simulation $(M,M,(0,0),C,\pi)$ defined above is called the \emph{level-$n$ sparse simulation of $\CA$}.
In general, the sparse simulation is not rigid, but we can quite easily show the following.

\begin{lemma}
\label{lem:SparseRigidity}
The sparse simulation of $\CA$ by $\varCA = \intp^\infty_{G_s(n,p)}$ is weakly rigid.
\end{lemma}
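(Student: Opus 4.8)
We must show that the level-$n$ sparse simulation $(M,M,(0,0),C,\pi)$ of $\CA$ by $\varCA = \intp^\infty_{G_s(n,p)}$ is weakly rigid, i.e., $\pi(\tilde\tra_{\varCA}) \subset \hat\tra_{\CA}$. So we take an arbitrary nondeterministic trajectory $\eta \in \tilde\tra_{\varCA}$ (where $\varCA$ here plays the role of the ``large'' automaton whose cells are macro-cells, and the underlying $\CA = \intp^\infty_p$ is the simulated one — note the excerpt's statement swaps the usual roles, so I must be careful about which is which). Concretely, $\eta$ is a spacetime diagram over the state set of $G_s(n,p)$ in which each cell is either the correct $G_s(n,p)$-update of its neighborhood or blank, and we must show that applying $\pi$ (reading off the $\alp$-state of the bottom-left corner of each $M\times M$ block, when that corner is a live right-moving counter-$0$ cell, else blank) yields a diagram $\pi(\eta)$ that is a legal trajectory of $\hat\CA$.

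**Approach.** The plan is to argue locally. Fix a macro-coordinate $(i,t)$ and suppose $\pi(\eta)^t_i = b \ne B_\alp$; we must exhibit $v \in f(\pi(\eta)^{t-1}_{i-1}, \pi(\eta)^{t-1}_i, \pi(\eta)^{t-1}_{i+1})$ with $b = \locf(v)$, where $\locf$ is the local rule of $\CA = \intp_p^\infty$ and $f$ allows replacing any subset of coordinates by $B_\alp$. Since $\pi(\eta)^t_i \ne B_\alp$, the cell at the base $\eta^{tM}_{iM}$ is a live right-moving cell with counter $0$. The key structural step is to trace backwards through the $G_s(n,p)$-dynamics: such a cell can only arise (examining Algorithm~\ref{alg:Sparsified}) from the \textsc{InputTriple} branch, which fired on the previous row because a returning cell (counter $2n$, kind $\Leftrightarrow$) sat at $\eta^{tM-1}_{iM}$ flanked by a right-moving counter-$2n$ cell and a left-moving counter-$2n$ cell, and line~\algref{alg:Sparsified}{ln:SparseCompute} executed $p_{\mathrm{body}}$ on the $\varalp$-states carried by those three cells. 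Then I must follow these three cells further back through the cycle of $M$ steps: the right-moving cell was launched (with counter $0$) at $\eta^{(t-1)M}_{iM}$, carrying the $\varalp$-state $\pi(\eta)^{t-1}_i$ (or it was never launched, in which case that argument slot was treated as $B_\varalp$); the left-moving cell was created by the \textsc{CopyRight} at line~\algref{alg:Sparsified}{ln:SparseLeft} from a right-moving counter-$0$ cell one macro-block to the right, hence carries $\pi(\eta)^{t-1}_{i+1}$ (or $B_\varalp$ if absent); and the returning cell's $\varalp$-state is unchanged from the right-moving cell at $(t-1)M$, which also traces to $\pi(\eta)^{t-1}_{i-1}$ via the crossing at $t=n$ — wait, more carefully, the returning cell carries the state that travelled leftward from the neighboring block, so it equals $\pi(\eta)^{t-1}_{i-1}$ or $B_\varalp$. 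Because $\eta$ is only a \emph{nondeterministic} trajectory, at any step a cell may have been blanked instead of correctly updated; the effect is exactly that one or more of the three incoming $\varalp$-states gets replaced by $B_\varalp$ — which is precisely what the operation $f$ in the definition of $\hat\CA$ permits. Hence $b = \locf(v)$ for some $v \in f(\cdots)$, giving $\pi(\eta) \in \hat\tra_{\CA}$.

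**Order of steps.** (1) Unwind the definition: reduce weak rigidity to the local statement above. (2) Establish the ``provenance lemma'': if $\eta^{sM}_{jM}$ (for any $s$) is a live right-moving counter-$0$ cell in $\eta \in \tilde\tra_{\varCA}$, then reading one cycle backward, the three cells feeding the \textsc{InputTriple} that produced the counter-$0$ cell at the next cycle carry $\varalp$-states each equal either to $\pi(\eta)^{s}_{j+\delta}$ for the appropriate $\delta\in\{-1,0,1\}$, or to $B_\varalp$. This is where almost all the work lives: it is essentially re-deriving the simulation-correctness argument of Lemma~\ref{lem:SparseSim}, but now allowing blank-outs at every step and checking that blanking a cell mid-cycle can only turn an argument into $B_\varalp$ rather than corrupt it into some wrong non-blank value — one must check each branch of Algorithm~\ref{alg:Sparsified} and confirm no branch fabricates a spurious non-blank $\varalp$-state. (3) Combine: the $\varalp$-state at the base of the macro-cell at $(i,t)$ is $\locf$ applied to those three values, which by (2) is $\locf(v)$ for $v \in f(\pi(\eta)^{t-1}_{i-1},\pi(\eta)^{t-1}_i,\pi(\eta)^{t-1}_{i+1})$, and if the base cell was blanked by nondeterminism, $\pi(\eta)^t_i = B_\alp$ and there is nothing to check since $B_\alp$ is always an allowed value under $\hat\CA$ (via blanking all three coordinates and using quiescence).

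**Main obstacle.** The hard part is step (2): carefully checking that in a $\tilde\tra_{\varCA}$-trajectory, a live right-moving counter-$0$ cell really does force the entire backward cycle structure (the crossing cells, the returning cell, their counter values and $\varalp$-payloads), and that the \emph{only} way the reconstructed input triple can differ from the ``ideal'' one $(\pi(\eta)^{t-1}_{i-1},\pi(\eta)^{t-1}_i,\pi(\eta)^{t-1}_{i+1})$ is by some coordinates being $B_\varalp$. In particular one needs that the various \textsc{CopyLeft}/\textsc{CopyRight} branches, when their preconditions (counter values, kinds, $\neg\live$ of neighbors) are met, genuinely pin down the source cell, so that a nondeterministic blanking somewhere upstream propagates as a clean ``this argument is now blank'' rather than producing an inconsistent partial cycle that could still satisfy \textsc{InputTriple} with a bogus payload. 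Showing this requires a somewhat tedious but routine case analysis over the branches of the program body, together with the observation that \textsc{InputTriple} tolerates (indeed expects) missing cells in the triple. The remaining steps are bookkeeping.
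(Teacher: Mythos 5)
Your plan is essentially the paper's proof: reduce weak rigidity to the local statement at the base of a single macro-cell, note that in a trajectory of the tilde-automaton a live right-moving cell with counter $0$ can only be produced by the \textsc{InputTriple} branch, and then trace each of the three input cells backwards through the cycle, using the fact that non-blank cells in such a trajectory are forced to be correct updates, so that each input payload is either the state of the corresponding neighbouring macro-cell or $B_\varalp$ --- exactly the condition defining the hatted nondeterministic trajectories of $\intp^\infty_p$. The paper's proof does precisely this (also leaving the backward chain as an induction ``using the rules of the automaton''), so the structure and level of detail match.

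There is, however, one concrete slip in your provenance tracing, and it matters because $\hat{\,}$-membership requires $\intp_p$ applied to the triple in the correct left--centre--right order. The right-moving counter-$2n$ cell sitting at position $iM-1$ at the end of the cycle was launched at the base of the \emph{left} neighbouring block at $(i-1)M$ and travelled right for $2n$ steps, so it carries $\pi(\eta)^{t-1}_{i-1}$, not $\pi(\eta)^{t-1}_i$; and the returning cell at $iM$ was spawned at mid-cycle (line~\algref{alg:Sparsified}{ln:SparseReturn}) from the right-mover launched at the block's \emph{own} base and travels back to that base, so it carries the block's own state $\pi(\eta)^{t-1}_i$, not $\pi(\eta)^{t-1}_{i-1}$ --- your ``more careful'' correction goes the wrong way; the datum that arrives from the right-hand neighbouring block rides on the left-moving cell, which you did assign correctly as $\pi(\eta)^{t-1}_{i+1}$. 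As literally written, your assignment would make the computed value $\intp_p$ of a permuted triple, which is not of the required form. With the assignments corrected, the generic statement of your step (2) is exactly what the forced backward \textsc{CopyLeft}/\textsc{CopyRight} chains give (each chain ends on the base of the appropriate neighbouring macro-cell, so a live input really does equal $\pi(\eta)^{t-1}_{i+\delta}$ for the right $\delta$, and a blank input is read as $B_\varalp$), and the rest of your argument goes through as in the paper.
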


\begin{proof}
Let $\eta \in \tilde \tra_\CA$ be arbitrary.
It suffices to prove that we have $\pi(\eta)^1_0 = \intp_p(c_{-1}, c_0, c_1)$ in the case that $\pi(\eta)^1_0 \neq B_\alp$, for some choice of $c_\delta \in \{\pi(\eta)^0_\delta, B_\varalp\}$ for all $\delta \in \{-1,0,1\}$.
Denote thus $P = \eta^{[M,2M-1]}_{[0,M-1]} \in C$.

Consider the cells $\eta^{M-1}_\delta$ for $\delta \in \{-1,0,1\}$ which lie directly below the base $P^0_0$ in $\eta$.
Since $P^0_0$ is live, its $\varalp$-state is obtained by applying $\intp_p$, the local rule of $\varCA$, to the $\varalp$-states of these three cells.
If $\eta^{M-1}_{-1}$ is live, then we can prove by induction, using the rules of $\CA$, that each $\eta^t_{t-M}$ for $t \in [0,M-1]$ is a right moving cell, and they all have the same $\varalp$-state, say $c \in \varalp$.
But this implies that $\eta^{[0,M-1]}_{[-M,-1]}$ is a macro-cell in state $c$, and then $\pi(\eta)^0_{-1} = c$.
Analogously, we can show for all $\delta \in \{-1,0,1\}$ that if $\eta^{M-1}_\delta$ is live, then $\pi(\eta)^0_\delta$ equals its $\varalp$-state.
This finishes the proof.
\end{proof}

\begin{lemma}
\label{lem:SparseConnect}
The sparse simulation of $\CA$ by $\varCA = \intp^\infty_{G_s(n,p)}$ is connecting.
\end{lemma}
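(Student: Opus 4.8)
The goal is to show that the level-$n$ sparse simulation $(M,M,(0,0),C,\pi)$ is connecting: for every $\eta \in \tilde\tra_\CA$ and every adjacency $\pi(\eta)^t_i$, $\pi(\eta)^{t-1}_{i+\delta}$ in the $\pi$-image, the corresponding base cells $\eta^{tM}_{iM}$ and $\eta^{(t-1)M}_{(i+\delta)M}$ are directly connected in the graph $G$ associated to $\eta$. I would first recall (from the weak-rigidity proof) that a non-blank macro-cell base $P^0_0 = \eta^{tM}_{iM}$ is a live, right-moving cell with counter value $0$, and that—by tracing the $\CA$-rules backward through one cycle of $M = 2n+1$ steps in any $\tilde\tra_\CA$-trajectory—the base at time $tM$ must have been \emph{produced} by a correct \textsc{InputTriple} computation at the previous base row $(t-1)M$, surrounded by a right-moving and a left-moving cell carrying the $\varalp$-states of the left and right neighbor macro-cells. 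The core of the argument is that this backward tracing exhibits an explicit directed path in $G$ (not merely $G'$): I must check that every cell along the path is $\delta$-demanding for the appropriate $\delta$, so the edges lie in $G$ and not just in the weak graph $G'$.

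Concretely, the plan is to fix the adjacency, say with $\delta = 0$ (the cases $\delta = \pm 1$ being symmetric variants that route through the crossing cells at $t = n$), and explicitly write down the path from $\eta^{tM}_{iM}$ down to $\eta^{(t-1)M}_{(i+\delta)M}$. For $\delta = 0$: the base at row $tM$ came from the \textsc{InputTriple} cell one step below, which is the returning cell at row $tM-1$; that returning cell has been moving left for $n$ steps, so it traces back along a diagonal of returning cells to the cell created by line~\algref{alg:Sparsified}{ln:SparseReturn} at time $(t-1)M + n$, sitting just left of a right-moving cell; that right-moving cell traces back along a diagonal of right-moving cells (line~\algref{alg:Sparsified}{ln:SparseCompute}'s output, then repeated \textsc{CopyLeft} steps) to the base $\eta^{(t-1)M}_{iM}$ itself. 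For $\delta = -1$ and $\delta = +1$ one instead follows the left-moving/right-moving diagonals that cross at the midpoint of the cycle, landing on the base of the left or right neighbor macro-cell. In each case I verify, by inspecting the guards in Algorithm~\ref{alg:Sparsified}, that the relevant cells are $\delta$-demanding: e.g., a right-moving cell with $0 < \cou < 2n$ can only be produced when its left neighbor was a right-moving cell (line~\algref{alg:Sparsified}{ln:SparseLeft} feeds off $\kind^+ = {\Rightarrow}$, etc.), so the cell "demands" a non-blank predecessor in the correct direction $\delta$, placing the edge in $G$.

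The main obstacle I anticipate is the bookkeeping around the nondeterministic blanking allowed in $\tilde\tra_\CA$: a cell on the intended path could have been replaced by $B_\alp$, which would break the directed path. The key observation that saves the argument is that the demanding property is exactly designed to forbid this—if a cell is $\delta$-demanding and non-blank, its $\delta$-predecessor \emph{cannot} be blank, so once we know the macro-cell base at row $tM$ is non-blank, demandingness propagates non-blankness backward along the entire path down to row $(t-1)M$, and each such backward step is a genuine $G$-edge. Thus the real work is a careful case analysis of Algorithm~\ref{alg:Sparsified}, confirming that every cell-type occurring along these three canonical paths (right-moving with intermediate counter, left-moving with intermediate counter, returning with intermediate counter, and the specially-created cells at $\cou = 0, n, 2n$) is $\delta$-demanding for the direction in which the path proceeds. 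I would also note that the path may be concatenated across several cycles if $t$ ranges over a longer interval, but since adjacency in $\pi(\eta)$ only relates consecutive rows $t-1$ and $t$ of the macro-grid, a single cycle suffices, and transitivity of "directly connected" (being reachability in $G$) handles any composition. Finally I would remark that the same argument, read in $\hat\tra_\CA$ instead of $\tilde\tra_\CA$, does \emph{not} go through unchanged—$\hat\CA$ may blank a cell in a neighbor's view without blanking the cell itself—so strong connectedness is not claimed here, consistent with Lemma~\ref{lem:SparseRigidity} only giving weak rigidity.
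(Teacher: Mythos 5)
Your proposal is correct and follows essentially the same route as the paper's proof: identify the two macro-cell bases, transfer the $\delta$-demanding property of the simulated state to the base cell, and exhibit an explicit diagonal of moving cells (right-moving, left-moving, or returning, depending on $\delta$) each of which is demanding in the direction of travel, so that the whole diagonal is a directed path in $G$ and nondeterministic blanking cannot break it. The paper merely treats $\delta=-1$ via the right-moving diagonal and declares the other cases similar, whereas you work out $\delta=0$; this is an inessential difference.
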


\begin{proof}
Let $\eta \in \tilde \tra_\CA$, and suppose $\pi(\eta)^t_i$ and $\pi(\eta)^{t-1}_{i+\delta}$ are adjacent for some $i, t \in \Z$ and $\delta \in \{-1,0,1\}$.
We assume that $\delta = -1$, as the other cases are similar.
We now have $\eta^{[tM,(t+1)M-1]}_{[iM,(i+1)M-1]}, \eta^{[(t-1)M,tM-1]}_{[(i-1)M,iM-1]} \in C$, and by the definition of $C$, $\eta^{tM}_{iM}$ and $\eta^{(t-1)M}_{(i-1)M}$ are both right moving cells with counter value $0$.
Since $\pi(\eta)^t_i$ is $(-1)$-demanding, so is $\eta^{tM}_{iM}$, and thus $\eta^{tM-1}_{iM-1}$ is a right moving cell with counter value $2n$.
Right moving cells with nonzero counter value are all $(-1)$-demanding, and thus $\eta^{tM-\ell}_{iM-\ell}$ is a $(-1)$-demanding cell for all $\ell \in [0,2n]$.
This shows that the bases of the two macro-cells are directly connected.
\end{proof}

Next, we show that every live cell in a nondeterministic trajectory of $\CA$ originates from a macro-cell in the following sense.

\begin{lemma}
\label{lem:SparseParents}
Let $\eta \in \tilde \tra_\CA$, and let $\eta^t_i$ be a live cell.
Then $\eta^{t-s}_j$ is the base of a macro-cell for some $s \in [0,2n]$ and $j \in [-n,n]$, and is directly connected to $\eta^t_i$.
\end{lemma}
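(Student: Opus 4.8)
The plan is to trace the causal history of the live cell $\eta^t_i$ backwards in time through the graph $G'$ of weak adjacencies, using the structure imposed by the program $G_s(n,p)$. The key observation is that in any nondeterministic trajectory $\eta \in \tilde\tra_\CA$, a cell becomes live only through one of the procedures \textsc{CopyLeft}, \textsc{CopyRight}, or the ``execute $p_{\mathrm{body}}$'' branch of Algorithm~\ref{alg:Sparsified}, and in each case the new live cell inherits its counter value and kind in a tightly constrained way from a live neighbor one step earlier (for the copy procedures) or is created with $\cou = 0$, $\kind = {\Rightarrow}$ (for the compute branch). So I would first establish the invariant that every live cell of $\eta$ has a well-defined counter value, and that following the ``source'' of a live cell backwards decreases the counter by exactly one at each step until it reaches $0$ (with the $2n \to 0$ wraparound happening precisely when the compute branch fires). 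This gives a directed path in $G'$ of length at most $2n$ from $\eta^t_i$ back to a cell $\eta^{t-s}_j$ with $\cou = 0$, $\kind = {\Rightarrow}$, $\live = \mtt{True}$, i.e.\ the base of a macro-cell.

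Second, I would bound the horizontal displacement: each step of the backward path moves the column index by at most one (the neighborhood radius is $1$), so $|j - i| \le s \le 2n$. To get the sharper bound $j \in [-n,n]$ claimed in the statement (after translating so that $i = 0$, which is what the lemma implicitly does via shift-invariance), I would use the more refined geometry of the simulation: a right-moving cell with counter value $c$ sits at horizontal offset $c$ from its originating base during the first half of a cycle, so tracing back from any live cell the column can drift by at most $n$ in each direction before the counter returns to $0$. Concretely, I would split into cases according to $\kind(\eta^t_i)$ and track the column offset as a function of the counter value along the backward path, using lines~\algref{alg:Sparsified}{ln:SparseLeft}, \algref{alg:Sparsified}{ln:SparseReturn}, \algref{alg:Sparsified}{ln:SparseCrossRight} and \algref{alg:Sparsified}{ln:SparseCrossLeft} to see that the extreme excursions are exactly $\pm n$.

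Third, having produced the backward path, I would check it is in fact a path in $G$, not merely $G'$, i.e.\ that every cell on it is $\delta$-demanding for the appropriate $\delta$. This is where the rules of $G_s(n,p)$ pay off: a right-moving cell with nonzero counter is $(-1)$-demanding (it is produced only by \textsc{CopyLeft} from its right neighbor), a left-moving cell with counter in $(0, 2n)$ is $(+1)$-demanding, a returning cell is $(+1)$-demanding, and so on — exactly the kind of statement already used in the proof of Lemma~\ref{lem:SparseConnect}. So each edge of the backward path lies in $G$, hence $\eta^{t-s}_j$ is directly connected to $\eta^t_i$.

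The main obstacle, I expect, is the careful case analysis in the second step: pinning down the column offset to within $[-n,n]$ rather than the crude $[-2n,2n]$ requires understanding the ``crossing at $t = n$'' mechanism precisely, since a backward path from a left-moving or returning cell created during the second half of a cycle must be traced through the crossing point, and one has to verify that the nondeterministic blanking allowed in $\tilde\tra_\CA$ cannot create spurious live cells that violate the counter/offset invariant — the point being that \emph{once} a cell is live in $\tilde\tra_\CA$, its value is the deterministic one, so the invariant is preserved. I would handle this by first proving a clean ``local inheritance lemma'': if $\eta^t_i$ is live in $\eta \in \tilde\tra_\CA$, then exactly one of the creation branches applies to $(\eta^{t-1}_{i-1}, \eta^{t-1}_i, \eta^{t-1}_{i+1})$, determining a unique predecessor cell, its counter value, kind, and the sign $\delta$ of the displacement; the lemma then follows by iterating this $s$ times until the counter hits $0$.
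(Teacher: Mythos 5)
Your first and third steps are exactly the paper's proof, which disposes of the lemma in two sentences: a live cell with $\cou = 0$ can only have been produced by the \textsc{InputTriple} branch, hence is a live right-moving cell with counter $0$, i.e.\ the base of a macro-cell; and a live cell with nonzero counter is $\delta$-demanding for some $\delta \in \{-1,0,1\}$, so its predecessor in $G$ is a live cell with counter one smaller, and the claim follows by induction along this inheritance chain. So the core of your proposal coincides with the paper's argument.

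Where you go beyond the paper, in your second step, the specific geometric claim is false. Right-moving cells keep their kind and keep incrementing their counter all the way up to $2n$ while drifting right; a right-moving cell with counter $c > 0$ is only $(-1)$-demanding (it always has a preimage in which it and its right neighbor are blank), so the only edge of $G$ entering it comes from the right-moving cell with counter $c-1$ one step down-left. Consequently the unique base directly connected to it sits at horizontal offset exactly $-c$, which ranges down to $-2n$, not $-n$; symmetrically, a left-moving cell with counter $c$ has its directly connected base at offset $+c$, up to $+2n$. No case analysis of the crossing at counter $n$ can repair this, because the chain is forced by the direct-connection requirement; the best horizontal bound obtainable is $|j - i| \le s \le 2n$. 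The paper does not attempt a sharper bound either: its induction yields $s \le 2n$ and a horizontal window of width less than one macro-cell width $M = 2n+1$, and that is all the later applications use (Lemma~\ref{lem:Parents}, Corollary~\ref{cor:Parents} and Claim~\ref{claim:Pigeonhole} only need windows of size $O(n)$ times the relevant block sizes). So the range $j \in [-n,n]$ in the statement should be read as a window of width $O(n)$ around $i$ (the chain argument gives $j \in [i-2n, i+2n]$), rather than as a bound your excursion argument could actually establish.
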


\begin{proof}
First, if $\cou(\eta^t_i) = 0$, then $\eta^t_i$ is necessarily the base of a macro-cell, and the claim holds.
Otherwise, $\eta^t_i$ is $\delta$-demanding for some $\delta \in \{-1,0,1\}$, and the claim follows by induction.
\end{proof}

It can be easily proved by induction that if $\eta \in \tilde \tra_\CA$, and two cells $\eta^t_i$ and $\eta^s_j$ are weakly connected and have the same kind and counter value, then $|i-j|$ and $|t-s|$ are both divisible by $M_n$.
The following two lemmas directly follow from this observation, and the former is the main reason for defining $G_s$ in the first place.

\begin{lemma}
\label{lem:Sparsity}
Let $\eta \in \tilde \tra_\CA$, and let $(i,t) \in \Z^2$.
Let $r \in \N$, and denote
\[ D(r) = \{ s \in [0,r-1] \;|\; \eta^t_i \mbox{~is connected to~} \eta^s_0\}. \]
Then $|D(r)| \leq \frac{3r}{n} + 3$.
\end{lemma}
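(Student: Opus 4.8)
The statement to prove is Lemma~\ref{lem:Sparsity}: for $\eta \in \tilde\tra_\CA$ and any cell $\eta^t_i$, the set $D(r)$ of times $s \in [0,r-1]$ such that $\eta^t_i$ is connected (by a directed path in $G$) to $\eta^s_0$ satisfies $|D(r)| \le \tfrac{3r}{n} + 3$. The plan is to use the structural observation stated just before the lemma: if two cells in a nondeterministic trajectory are weakly connected and share the same kind and counter value, then both their spatial and temporal displacements are divisible by $M = 2n+1$. So first I would reduce to the case where $\eta^t_i$ is live (if it is blank, $D(r)$ is empty since blank cells are isolated vertices in $G$, and the bound holds trivially). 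Then, any cell $\eta^s_0$ to which $\eta^t_i$ is connected is itself live and the terminal vertex of a directed path starting at $\eta^t_i$.

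The key step is to classify the connected cells $\eta^s_0$ by their kind (one of ${\Leftarrow}, {\Rightarrow}, {\Leftrightarrow}$): there are three kinds, and within a single kind the counter value $\cou$ is a function of the time $s$ alone — specifically, by the synchronization property, all live cells at time $s$ have counter value $s \bmod M$, so in fact the kind together with the time $s$ essentially pins down the full local data. But more usefully: fix a kind $k$; the cells $\eta^s_0$ of kind $k$ that $\eta^t_i$ connects to all lie in column $0$, all have counter value $s \bmod M$, and are pairwise weakly connected (through $\eta^t_i$, following the two directed paths). Applying the divisibility observation to any two such cells $\eta^{s_1}_0$ and $\eta^{s_2}_0$ of the same kind with the same counter value forces $s_1 \equiv s_2 \pmod{M}$. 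Hence for each fixed kind, the times $s$ with the corresponding residue occur at most once every $M = 2n+1$ steps inside $[0,r-1]$, giving at most $\lceil r/M \rceil \le \tfrac{r}{M} + 1 \le \tfrac{r}{n} + 1$ such times. Summing over the three kinds yields $|D(r)| \le \tfrac{3r}{n} + 3$.

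The main obstacle I anticipate is making precise why two connected cells $\eta^{s_1}_0, \eta^{s_2}_0$ of the same kind and counter value are \emph{weakly connected to each other} in a way that triggers the divisibility observation. The cited observation is phrased for weakly connected cells (paths in $G'$), and directed connectivity in $G$ certainly implies weak connectivity in $G'$; but the two target cells need not lie on a single directed path — rather, each is reached from $\eta^t_i$ by its own directed path. One needs weak connectivity to be symmetric and to compose, so that a path $\eta^t_i \to \eta^{s_1}_0$ reversed and concatenated with $\eta^t_i \to \eta^{s_2}_0$ gives an undirected path in $G'$ between $\eta^{s_1}_0$ and $\eta^{s_2}_0$; this is fine since $G'$ is regarded as undirected for the "weakly connected" relation. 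A secondary subtlety is confirming that live cells of the same kind really do have counter value determined by time mod $M$ even in $\tilde\tra_\CA$ (not just in the correct simulation) — but this is exactly the content of the pre-lemma observation and can be invoked directly. Once these points are nailed down, the counting argument is immediate.
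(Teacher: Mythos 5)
Your counting skeleton (three kinds, at most one residue class of times mod $M = 2n+1$ per kind, hence at most $3(\tfrac{r}{M}+1) \leq \tfrac{3r}{n}+3$) is the same as the paper's, and your remark that connectivity in $G$ implies weak connectivity in $G'$, which is symmetric and composes through $\eta^t_i$, is fine. But the key step — that for each fixed kind the connected cells $\eta^s_0$ in column $0$ occupy a \emph{single} residue class of $s$ modulo $M$ — is not actually established by your argument. You justify it via the claim that ``all live cells at time $s$ have counter value $s \bmod M$'' in any $\eta \in \tilde\tra_\CA$. This is false: synchronization of $\cou$ to absolute time is a property of a correctly initialized simulation, not of arbitrary nondeterministic two-directional trajectories. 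For instance, two correct but temporally out-of-phase simulations placed far apart with a blank gap between them form a valid trajectory of $\tilde\CA$, and their counters disagree with $s \bmod M$ in at least one of the two regions. What is true (and is what the induction behind the pre-lemma observation tracks) is that the phase $(s - \cou) \bmod M$ is constant on each weakly connected component — but it need not be $0$, and, more importantly, this temporal invariant alone does not give your conclusion.

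Even if one grants the synchronization claim, the argument collapses: under it, ``same counter value'' for two cells $\eta^{s_1}_0, \eta^{s_2}_0$ is by definition the same as $s_1 \equiv s_2 \bmod M$, so your application of the divisibility observation is circular and yields no new information, and pairs with \emph{different} counter values are simply not addressed. Since a single kind (e.g.\ right-moving) occurs at every counter value in $[0,2n]$, nothing in your proof prevents connected right-moving cells from appearing in column $0$ in many different residue classes — which would ruin the bound. The missing ingredient is the spatial half of the argument, which is exactly what the paper's proof notes: two connected cells on the same column with the same kind must have the \emph{same counter value} (within a weakly connected component, position, kind and counter satisfy congruences mod $M$ — e.g.\ $i - \cou$ for right-moving cells, $i + \cou$ for left-moving and returning ones — so fixing the column and the kind pins down $\cou \in [0,2n]$). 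Only after this step does the observation apply nontrivially to give $M \mid |s_1 - s_2|$, and hence the per-kind count of at most $\tfrac{r}{M}+1$. As it stands, your proof has a genuine gap at precisely this point.
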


\begin{proof}
Simply note that if two cells on the same column of $\eta$ are connected and have the same kind, then they also have the same counter value, and use the above observation.
\end{proof}

\begin{lemma}
\label{lem:SparseEvenCells}
Let $\eta \in \tilde \tra_\CA$, and let $(i,t), (j,s) \in \Z^2$ be such that $\eta^{[t,t+M_n-1]}_{[i,i+M_n-1]}$ and $\eta^{[s,s+M_n-1]}_{[j,j+M_n-1]}$ are macro-cells, and their bases are weakly connected.
Then $|i-j|$ and $|t-s|$ are both divisible by $M_n$.
\end{lemma}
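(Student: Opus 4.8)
The plan is to derive this directly from the ``easily proved by induction'' observation quoted just before Lemma~\ref{lem:Sparsity}: in any $\eta \in \tilde\tra_\CA$, if two cells $\eta^t_i$ and $\eta^s_j$ are \emph{weakly} connected and have the same $\kind$ and the same $\cou$ value, then $M_n \mid (i-j)$ and $M_n \mid (t-s)$. So first I would pause to actually record a proof of that observation, since it is the engine of both lemmas. The induction is on the length of a weak path in $G'$ between the two cells. For the base case (length $0$) there is nothing to prove. For the inductive step, it suffices to check a single weak edge: if $\eta^{t-1}_{i+\delta}$ and $\eta^t_i$ are weakly adjacent (both live), then one reads off from the program $G_s(n,p)$ in Algorithm~\ref{alg:Sparsified} how $\cou$ and $\kind$ of a live cell relate to those of its live neighbors one step earlier. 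A right-moving cell with $\cou < 2n$ gets $\cou$ incremented and inherits $\Rightarrow$ from its \emph{left} neighbor ($\delta = -1$); a left-moving cell with $\cou < 2n$ from its \emph{right} neighbor ($\delta = +1$); a returning cell likewise inherits from its right neighbor; and a freshly reset macro-cell base has $\cou = 0$, $\kind = \Rightarrow$. In each case the ``direction'' of a weak edge is forced by the combination of $\kind$ and whether $\cou$ is $0$, $2n$, or strictly between, and moving along one edge changes $\cou$ by exactly $\pm 1$ modulo $M_n$ while cycling through the three kinds in a fixed pattern. Hence after any closed combinatorial loop of weak-connection edges returning to a cell with the same $(\kind,\cou)$, the net horizontal and vertical displacements are forced to be multiples of $M_n$; spelling this out is routine bookkeeping over the (few) clauses of the program.

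Granting the observation, the lemma is immediate. By the definition of the macro-cell set $C$ in the proof of Lemma~\ref{lem:SparseSim}, the base $\eta^t_i$ of the first macro-cell satisfies $\live = \mtt{True}$, $\kind = \Rightarrow$, $\cou = 0$, and the same holds for the base $\eta^s_j$ of the second. Thus these two bases are live cells with identical $\kind$ and $\cou$; since they are assumed weakly connected, the observation yields $M_n \mid (i-j)$ and $M_n \mid (t-s)$, which is exactly the claim.

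The only real work is the induction for the quoted observation, and within it the main obstacle is purely organizational: one must verify, clause by clause in Algorithm~\ref{alg:Sparsified}, that every weak edge into a live cell (i) has its offset $\delta$ determined by the cell's $\kind$ together with the regime of its $\cou$ (whether it equals $0$, equals $2n$, lies in $(0,n)$, equals $n$, or lies in $(n,2n)$), and (ii) changes the pair $(\kind,\cou)$ deterministically, so that traversing a weak path is equivalent to iterating a fixed finite-state ``clock'' of period $M_n$ in time and $M_n$ in space. Once that correspondence is set up, divisibility of the total displacement by $M_n$ whenever the clock returns to the same state is automatic. I would present the observation as a short separate claim with this induction, then conclude Lemma~\ref{lem:SparseEvenCells} in two lines as above; Lemma~\ref{lem:Sparsity} then also follows by restricting to the case $i=j$ (same column), where equal $\kind$ forces equal $\cou$ among connected cells, giving at most one connected cell per residue class modulo $M_n$ in each of the three kinds, hence the bound $|D(r)| \le 3r/n + 3$.
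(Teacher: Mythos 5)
Your overall route is the same as the paper's: the paper offers no separate argument for this lemma, deriving it immediately from the observation stated just before it (weakly connected cells with equal $\kind$ and $\cou$ have horizontal and vertical displacements divisible by $M_n$), exactly as you do via the fact that both bases are live, right-moving cells with $\cou = 0$. That two-line reduction is correct, and spelling out the induction behind the observation is a reasonable addition, since the paper only asserts it.

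The one flaw is inside your sketch of that induction: it is not true that every weak edge into a live cell ``has its offset $\delta$ determined by the cell's $\kind$ together with the regime of its $\cou$''. Weak adjacency only requires both endpoints to be live, and several live-producing clauses of Algorithm~\ref{alg:Sparsified} fire with more than one live cell in the neighborhood: a fresh base produced through \textsc{InputTriple} ($\kind = {\Rightarrow}$, $\cou = 0$) can have live predecessors at all three offsets; the two crossing clauses (new counter $n+1$) each have two live predecessors; and a left-moving cell with counter in $(n+1,2n)$ created while a returning cell slides along it has live predecessors at offsets $0$ and $+1$. So a weak path is not the orbit of a deterministic ``clock'', and an induction that presupposes a forced $\delta$ per step breaks precisely at those steps. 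The repair stays within the clause-by-clause bookkeeping you propose, but changes what is checked: verify that in every live-producing clause (a) every live cell of the neighborhood has counter $\equiv \cou - 1 \pmod{M_n}$, and (b) the quantity $i - g(\kind,\cou) \bmod M_n$, with $g({\Rightarrow},c)=c$, $g({\Leftarrow},c)=-c$ and $g({\Leftrightarrow},c)=-c-1$, takes the same value at the new cell and at each of its live predecessors; the kind-dependent phase $g$ is exactly what absorbs the ambiguous offsets at crossings and at base creation. These two invariants propagate along every weak edge regardless of which predecessor it comes from, yield both divisibilities at once, and your concluding step for the lemma then stands unchanged.
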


\begin{remark}
In the following sections, up to and including the proof of the main theorem, we will not refer to the rules of any sparsified automaton, but only to the lemmas proved here, and the dimensions and base coordinate of the sparse simulations.
Indeed, the exact implementation of the sparsification transformation is largely irrelevant.
\end{remark}

\section{Universal Programmatic Simulation}
\label{sec:UnivSim}

In this section, we construct a cellular automaton that simulates another automaton in a specific but highly modular way.
The details of the construction are mainly borrowed from \cite{Ga01}.
However, there is an important conceptual difference between the error-correcting, self-organizing simulator of G\'acs, and the following construction.
Namely, whereas the G\'acs automaton is designed to detect and fix small errors in the simulation, our universal simulator is constructed so that every error will propagate and wipe out as much of the simulation structure as possible, replacing it by an area of blank cells.
For this reason, it is also much simpler.

\subsection{The Universal Simulator Automaton}
\label{sec:UnivConstr}

We are now ready to present the construction for simulating a single arbitrary cellular automaton.
For that, let $p$ be a valid program that encodes the CA $\intp_p^\infty$ on the alphabet $\varalp$.
We construct a cellular automaton $\CA : \alp^\Z \to \alp^\Z$ that simulates $\intp_p^\infty$ by presenting a valid program that defines $\CA$.
As with the sparsification transformation, there are many essentially similar ways of achieving universal programmatic simulation, and many details in our construction of $\CA$ are somewhat arbitrary.

\begin{algorithm}[htp]
\caption{The parameters and fields of $\Psi$, with comments indicating their purpose.}
\label{alg:ParamsFields}
\begin{algorithmic}

\State $\mbf{num\ 		param}\ \mtt{CSize} = Q$ \Comment{Size of colonies, width of macro-cells}
\State $\mbf{num\ 		param}\ \mtt{WPeriod} = U$ \Comment{Length of work period, height of macro-cells}
\State $\mbf{string\ 	param}\ \mtt{SimProg} = p$ \Comment{The program to be simulated}
\State
\State $\mbf{bool\ 		field}\ \live$ \Comment{Marks a cell that is not blank}
\State $\mbf{num\ 		field}\ \addr \leq \mtt{CSize} - 1$ \Comment{Position of a cell in its colony}
\State $\mbf{num\ 		field}\ \age  \leq \mtt{WPeriod} - 1$ \Comment{Synchronizes the cells of a colony}
\State $\mbf{bool\ 		field}\ \simu$ \Comment{Stores one bit of the state of $\intp_p^\infty$}
\State $\mbf{enum\ 		field}\ \work \in (Q_\mathrm{A} \cup \{\#\}) \times \Sigma_\mathrm{A}$ \Comment{Contains the agent and/or its data}
\State $\mbf{bool\ 		field}\ \prog$ \Comment{Stores one bit of $\mtt{SimProg}$ for the agent}
\State $\mbf{bool\ 		field}\ \lmail$ \Comment{Retrieves the state of the left neighbor}
\State $\mbf{bool\ 		field}\ \rmail$ \Comment{Retrieves the state of the right neighbor}
\State $\mbf{bool\ 		field}\ \out$ \Comment{Stores one bit of the output of the agent}
\State $\mbf{bool\ 		field}\ \lbor$ \Comment{Marks a left border cell}
\State $\mbf{bool\ 		field}\ \rbor$ \Comment{Marks a right border cell}
\State $\mbf{bool\ 		field}\ \doom$ \Comment{Marks a cell that dies at the end of the work period}

\end{algorithmic}
\end{algorithm}

\begin{algorithm}[htp]
\caption{The program body of $\CA$, and the procedure \textsc{Die}.}\label{alg:MainBody}
\begin{algorithmic}[1]

\Procedure{Die}{}
	\State $\live \gets \mtt{False}$
	\State $\addr, \age \gets 0$
	\State $\simu, \prog, \lmail, \rmail, \out \gets 0$
	\State $\work \gets (\#, B_\mathrm{A})$
	\State $\lbor, \rbor, \doom \gets \mtt{False}$
\EndProcedure

\If{$\live$}
	\If{$\neg$\Call{Valid}{}} \label{ln:Valid}
		\Call{Die}{} \label{ln:Invalid} \Comment{Destroy locally incorrect structure}
	\Else
		\If{$0 \leq \age < \mtt{CSize}$} \label{ln:Retrieve}
			\State $\lbor, \rbor \gets \mtt{False}$ \label{ln:NotCreative} \Comment{Remove unnecessary border flags}
			\State $\lmail \gets \lmail^-$ \Comment{Transfer information between colonies}
			\State $\rmail \gets \rmail^+$
		\EndIf
		\If{$\age = \mtt{CSize}$}
			\State $\doom \gets \mtt{False}$ \Comment{Remove existing doom flags}
			\If{$\lbor \wedge \live^-$} \Comment{Remove unnecessary border flags}
				\State $\lbor \gets \mtt{False}$
			\ElsIf{$\rbor \wedge \live^+$}
				\State $\rbor \gets \mtt{False}$
			\EndIf
			\If{$\addr = 0$} \Comment{Initialize computation}
				\State $\work \gets (q_\mathrm{init}, B_\mathrm{A})$ \Comment{Initial state and blank tape letter}
			\Else
				\State $\work \gets (\#, B_\mathrm{A})$ \Comment{No head and blank tape letter}
			\EndIf
			\State $\prog \gets \mtt{SimProg}_{\addr}$ \label{ln:StoreProg} \Comment{Returns $0$ if $\addr \geq |\mtt{SimProg}|$}
		\EndIf
		\If{$\mtt{CSize} < \age < \mtt{WPeriod} - 1$} \Comment{Compute new state}
			\State \Call{Compute}{}
		\EndIf
		\If{$\age = \mtt{WPeriod} - 1$}
			\If{$\addr = 0 \wedge \doom^-$}  \Comment{Prepare against dying neighbors}
				\State $\lbor \gets \mtt{True}$ \label{ln:Protect1}
			\ElsIf{$\addr = \mtt{CSize} - 1 \wedge \doom^+$}
				\State $\rbor \gets \mtt{True}$ \label{ln:Protect2}
			\EndIf
			\If{$\doom$}
				\Call{Die}{} \label{ln:Doomed} \Comment{Kill a doomed colony}
			\Else
				\State $\simu, \rmail, \lmail \gets \out$ \Comment{Assume new state}
			\EndIf
		\EndIf
		\State $\age \gets \age + 1 \bmod \mtt{WPeriod}$
	\EndIf
\Else
	\ \Call{Birth}{} \Comment{Become a live cell if necessary}
\EndIf

\end{algorithmic}
\end{algorithm}

\begin{algorithm}[htp]
\caption{The rules of local validity of a live cell in the universal simulation.}\label{alg:Validity}
\begin{algorithmic}[1]

\Procedure{Valid}{}
	\If{$(\live^- = \lbor) \wedge \neg(\lbor \wedge \rbor^- \wedge \frac{\mtt{CSize}}{\age} \in \{1, 2\})$} \label{ln:BlankInvalidL}
		\State \Return{$\mtt{False}$}
	\ElsIf{$(\live^+ = \rbor) \wedge \neg(\rbor \wedge \lbor^+ \wedge \frac{\mtt{CSize}}{\age} \in \{1, 2\})$} \label{ln:BlankInvalidR}
		\State \Return{$\mtt{False}$}
	\ElsIf{$\lbor^+ \vee \rbor^-$} \label{ln:BorderInvalid}
		\State \Return{$\mtt{False}$}
	\ElsIf{$\live^- \wedge ( \addr^- + 1 \not\equiv \addr \bmod \mtt{CSize} \vee \age^- \neq \age )$}
		\State \Return{$\mtt{False}$}
	\ElsIf{$\live^+ \wedge ( \addr \not\equiv \addr^+ - 1 \bmod \mtt{CSize} \vee \age \neq \age^+ )$}
		\State \Return{$\mtt{False}$}
	\ElsIf{$\lbor \wedge (\age \geq \mtt{CSize}) \wedge (\addr \neq 0)$}
		\State \Return{$\mtt{False}$}
	\ElsIf{$\rbor \wedge (\age \geq \mtt{CSize}) \wedge (\addr \neq \mtt{CSize} - 1)$}
		\State \Return{$\mtt{False}$}
	\Else
		\State \Return{$\mtt{True}$}
	\EndIf
\EndProcedure

\end{algorithmic}
\end{algorithm}

The parameters and fields of $\CA$ are defined in Algorithm~\ref{alg:ParamsFields}, and its main body is shown in Algorithm~\ref{alg:MainBody}.
The first field, $\live$, is called the \emph{live flag}, and it has the same purpose as in the sparsification transformation.
The procedure \textsc{Die} is also analogous to its earlier counterpart, making the current cell assume the state $B_\alp$.

The fields $\addr$ and $\age$, respectively called \emph{address} and \emph{age}, are used to synchronize the behavior of distinct cells.
Their maximum values are given by the parameters $Q$, called the \emph{colony size}, and $U$, the \emph{work period}, respectively.
The parameter $Q$ is assumed to be divisible by 4.
The age field is analogous to the $\cou$ field of the sparsification, while the address field serves the analogous purpose in the horizontal direction.
The procedure \textsc{Valid}, shown in Algorithm~\ref{alg:Validity}, is used to determine whether the local structure of the simulation is correct.
In particular, for cells with $\lbor = \rbor = \mtt{False}$, it returns $\mtt{True}$ only if $\age^- = \age = \age^+$, $\addr^- + 1 \equiv \addr \equiv \addr^+ - 1 \bmod Q$ and $\live^- = \live = \live^+ = \mtt{True}$.
Thus, in `normal' conditions, the blank state spreads through live cells, though the fields $\lbor$ and $\rbor$ can change this behavior.
Also, the address $\addr(x_i)$ of a cell $i \in \Z$ is normally $\addr(x_{i+1})$ decremented by one modulo $Q$, and adjacent cells have the same $\age$ value.
Following \cite{Ga01}, a subword $w = x_{[i,i+Q-1]}$ of length $Q$ is called a \emph{colony}, if $\addr(w_j) = j$ for all $j \in [0,Q-1]$.
The main idea of the construction is that each colony corresponds to a cell of the simulated automaton $\intp^\infty_p$, and explicitly computes a new state for itself every $U$ steps, timed by the age field.
The numbers $Q$ and $U$ are thus the dimensions of the simulation.

The next six fields (from $\simu$ to $\out$) are used to programmatically simulate the automaton $\intp^\infty_p$.
First, the Boolean field $\simu$, called the \emph{simulation bit}, is used to store the simulated $\varalp$-state of a colony $w \in \alp^Q$ as the bitstring $\simu(w_0) \simu(w_1) \cdots \simu(w_{\size{\varalp}-1}) \in \{0,1\}^{\size{\varalp}} = \varalp$.

The work period is divided into two phases: the \emph{retrieval phase} and the \emph{computation phase}.
The retrieval phase covers the first $Q$ steps of the work period, and is governed by the rules of the $\lmail$ and $\rmail$ fields, called the \emph{left and right mail fields}, respectively.
This phase is entered in line~\algref{alg:MainBody}{ln:Retrieve}, and it consists of moving the contents of the right and left mail fields $Q$ steps to the left and right, respectively.
In a correct simulation, both mail fields are equal to the simulation bit in all cells at the beginning of the work period, and at the end of the retrieval phase, the left (right) mail fields of a colony then contain exactly the simulation bits of its left (right, respectively) neighbor.

In the computation phase, a Turing machine head, called the \emph{agent}, is simulated on the cells of the colony, using the $\work$ fields, called the \emph{workspace fields}.
These fields contain elements of $(Q_\mathrm{A} \cup \{\#\}) \times \Sigma_\mathrm{A}$, where $Q_\mathrm{A}$ is the state set of the agent, and $\Sigma_\mathrm{A} \times \{0, 1\}^4$ is its tape alphabet.
At the beginning of the phase, the program $p$ is stored in the $\prog$ bits (the \emph{program bits}) of the first $|p|$ cells of every colony $w \in \alp^Q$, so that $\prog(w_0) \prog(w_1) \cdots \prog(w_{|p|-1}) = p$ (line~\algref{alg:MainBody}{ln:StoreProg}, where $p_i = 0$ if $i \geq |p|$).
At age $Q$, the workspace fields of every cell of the colony except the leftmost one are cleared, and the agent is placed on the leftmost cell in its initial state.
The procedure \textsc{Compute} performs one step of computation of the agent.
During the phase, the agent simulates the interpreter $\intp$ on the contents of the $\lmail$, $\simu$, $\rmail$ and $\prog$ fields of the colony, considering them as binary read-only tapes, and using the $\Sigma_\mathrm{A}$-components of the workspace fields as a read-and-write tape.
After at most $P^T_\intp(|p|, \size{p})$ steps, it writes the result in the $\out$ fields (the \emph{output bits}) of the colony.
If there is not enough room to perform the computation, the procedure \textsc{Die} is called.

The three remaining fields, $\doom$, $\lbor$ and $\rbor$ are called the \emph{doom flag}, \emph{left border flag} and \emph{right border flag}, respectively, and their purpose is to make $\CA$ simulate the blank state of $\intp^\infty_p$ using blank states, as in the sparsification transformation.
The `default value' of these fields is $\mtt{False}$, and a true value indicates some special circumstance.

In order to simulate blank states using blank states, some colonies have to be killed, and we choose to destroy those that are about to enter the state $B_\varalp$.
After completing the simulation of $\intp^\infty_p$ in the computation phase, the agent checks whether the new state of the colony would be $B_\varalp$, and if so, it sets the doom flag of every cell in the colony to $\mtt{True}$.
Otherwise, the flags remain $\mtt{False}$.
On line~\algref{alg:MainBody}{ln:Doomed}, it is stated that a cell with age $U-1$ and doom flag $\mtt{True}$ will immediately die.
Thus every colony trying to switch to the blank state will be wiped out at the end of its work period, being replaced by a segment $B_\alp^Q$ of blank cells.

Now, we also need a way to reclaim the blank segments, and this is where the border flags come in.
Cells with age less than $Q$ and left (right) border flag $\mtt{True}$ are called \emph{left (right, respectively) creative}.
The purpose of creative cells is to allow the simulation to create a new colony in the place of a destroyed one.
In the retrieval phase, a colony whose right neighbor has been replaced with blank cells will `push' its simulation bits into the blank segment, rebuilding the neighbor (who still simulates the blank state) from the left one cell at a time.
At any given time, the rightmost cell created in the new colony is right creative, but the others are not.
Thus the creative status slides to the right at speed 1, or in other words, the only right creative cells in the unfinished colony with address $a \in [0, Q-1]$ has age $a+1$.
To achieve this, a right creative cell whose right neighbor is blank is considered valid (and is not killed on line~\algref{alg:MainBody}{ln:Invalid}), provided that it has a consistent left neighbor and the above relation holds for its address and age.
Also, according to procedure \textsc{Valid}, a creative cell must have a blank neighbor in order to be structurally valid and survive, unless its age is exactly $\frac12 Q$ (it is necessary to allow this special case, since a colony may be created from both sides simultaneously).
The analogous rules hold for left creative cells.

This process of constructing a new colony is called \emph{colony creation}, and it is governed by the procedure \textsc{Birth} (Algorithm~\ref{alg:Birth}), which is only called on blank cells, and line~\algref{alg:MainBody}{ln:NotCreative} of the main program.
By the procedure \textsc{Birth}, if a blank cell has a left (right) neighbor which is right (left) creative (see lines~\algref{alg:Birth}{ln:BirthCond1} and~\algref{alg:Birth}{ln:BirthCond2}), then it will become live, in a state that is consistent with that of the neighbor, and with $\simu$-bit and left (right, respectively) mail field $0$.
Furthermore, the new cell will get a $\mtt{True}$ blocking flag itself.
According to line~\algref{alg:MainBody}{ln:NotCreative}, a creative cell always ceases to be creative on the next step, which causes only the bordermost cell of the creation process to be creative at any given time.
These rules cause a new colony to be created during the retrieval phase next to an existing colony surrounded by blank cells on either side.
The simulated state of the new colony is $B_\varalp$, and its mail fields contain the simulation bits of the neighboring colonies that created it.
To initialize the creation process, when a colony of doomed cells is about to be replaced by $B_\alp^Q$, the border cells of the neighboring colonies become blocking to protect them from the resulting blank cells (lines~\algref{alg:MainBody}{ln:Protect1} and~\algref{alg:MainBody}{ln:Protect2}).

If several neighboring colonies have been killed at the same time, a newly created colony may end up next to another blank segment, and needs to protect itself from it.
This is also achieved with the border flags.
Cells with address $0$ ($Q-1$), age at least $Q$ and left (right) border flag $\mtt{True}$ are called \emph{left (right, respectively) blocking}.
As with creative cells, a left (right) blocking cell whose left (right) neighbor is in state $B_\alp$ is considered valid by the procedure \textsc{Valid}, provided that its address and age are consistent with those of its right (left, respectively) neighbor.
Up to one exception, these are the only cells with age at least $Q$ and border flag $\mtt{True}$ that are considered valid.
The one exception is given by a left (right) blocking cell of age $Q$ surrounded by live cells with consistent addresses and ages, the leftmost (rightmost) of which is also right (left, respectively) blocking.
Such cells will also not die, but simply become non-blocking.
This situation happens when two neighboring colonies are created simultaneously.
At the end of the computation phase, blocking cells with age $U - 1$ that are not doomed become creative, and start the creation process of yet other colonies.

\begin{algorithm}[htp]
\caption{The rules for the creation of live cells cells.}\label{alg:Birth}
\begin{algorithmic}[1]

\Procedure{Birth}{} \Comment{Executed if and only if $\neg \live$ holds}
	\State \Call{Die}{} \Comment{Guarantees that dead cells are blank}
	\If{$\lbor^+ \wedge (\age^+ = \mtt{CSize} - \addr^+ \bmod \mtt{CSize})$} \label{ln:BirthCond1}
		\State $\live \gets \mtt{True}$
		\State $\addr \gets \addr^+ - 1 \bmod \mtt{CSize}$
		\State $\age \gets \age^+ + 1$
		\State $\rmail \gets \rmail^+$
		\State $\lbor \gets \mtt{True}$
	\ElsIf{$\rbor^- \wedge (\age^- = \addr^- + 1 \bmod \mtt{CSize})$} \label{ln:BirthCond2}
		\State $\live \gets \mtt{True}$
		\State $\addr \gets \addr^- + 1 \bmod \mtt{CSize}$
		\State $\age \gets \age^- + 1$
		\State $\lmail \gets \lmail^-$
		\State $\rbor \gets \mtt{True}$
	\EndIf
\EndProcedure

\end{algorithmic}
\end{algorithm}

We now make some useful definitions and observations about the local rule of the universal simulator automaton.
They can be verified by inspecting the program of $\CA$.

\begin{definition}
Let $P$ be a pattern over $\alp$, finite or infinite.
Two cells $P^t_i$ and $P^s_j$ of $P$ are \emph{consistent}, if they are live and satisfy $\addr(P^t_i) - \addr(P^s_j) \equiv i - j \bmod Q$ and $\age(P^t_i) - \age(P^s_j) \equiv t - s \bmod U$.
If $P \in \alp^{Q \times U}$, we say that the cell $P^t_i$ is \emph{consistent with $P$}, if it is live and satisfies $\addr(P^t_i) = i$ and $\age(P^i_t) = t$.
If $P$ is a subpattern of a larger pattern, we extend this terminology to cells outside it in the natural way.

A live cell with age in $[1,Q]$ and left (right) border flag $\mtt{True}$ is called \emph{left (right) co-creative}.
\end{definition}

Another word for co-creative cells could have been `created', since co-creative cells are created by creative cells during a correct simulation, but since the notion is technical, we chose a more distinguishable term.
Note that a blocking cell with age $Q$ is co-creative.

Note that in the following observations, we are dealing with a trajectory of the nondeterministic CA $\hat \CA$, which may regard any cell in a neighborhood as $\B_\alp$, independently of the other cells and the other neighborhoods.

\begin{observation}
\label{obs:Creative}
Let $\eta \in \hat \tra_\CA$, let $(i,t) \in \Z^2$, and denote $c = \eta^t_i$.
\begin{itemize}
\item If $c$ is left creative, then $\age(c) = Q - \addr(c) \bmod Q$.
If $d = \eta^{t+1}_{i-1}$ is also consistent with $c$, then either $d$ is left co-creative, or $\age(c) = \frac12 Q$ and $\eta^t_{i-1}$ is consistent with $c$ and right co-creative.
\item If $c$ is left co-creative, then $\age(c) = Q - \addr(c)$, and $\eta^{t-1}_{i+1}$ is consistent with $c$ and left creative.
\end{itemize}
The symmetric claim, with the left and right notions switched and $Q - \addr(c)$ replaced by $\addr(c) + 1$, also holds.
\end{observation}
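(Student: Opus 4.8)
The plan is to verify the observation by a straightforward, if slightly tedious, case analysis on the local rule $\locf$ of $\CA$, that is, on Algorithms~\ref{alg:MainBody}--\ref{alg:Birth}. The basic mechanism is the following: since $\eta\in\hat\tra_\CA$, for every coordinate $(i,t)$ there is an \emph{admissible predecessor triple} $v=(a',b',c')$, obtained from $(\eta^{t-1}_{i-1},\eta^{t-1}_i,\eta^{t-1}_{i+1})$ by replacing an arbitrary subset of the three entries by $B_\alp$, with $\eta^t_i=\locf(v)$. Whether $\locf$ executes the main body of Algorithm~\ref{alg:MainBody} or the procedure \textsc{Birth} is decided solely by whether the middle entry $b'$ is blank, so the analysis splits along that line; and within each branch the hypothesis that the cell under consideration is (left) creative or co-creative already fixes the values of its $\live$, $\lbor$ and $\age$ fields (the latter up to the stated range), which leaves only a handful of sub-branches consistent with it.

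I would begin with the two assertions about a left co-creative cell $c=\eta^t_i$, which are the cleanest. Write $c=\locf(a',b',c')$ for an admissible predecessor triple. If $b'=B_\alp$, then $\locf$ runs \textsc{Birth}; since the result is live with $\lbor=\mtt{True}$, the only branch that can produce it is line~\algref{alg:Birth}{ln:BirthCond1} (line~\algref{alg:Birth}{ln:BirthCond2} would set $\rbor$, not $\lbor$), whose guard is precisely $\lbor(c')=\mtt{True}$ and $\age(c')=Q-\addr(c')\bmod Q$; reading off the assignments gives $\age(c)=\age(c')+1$ and $\addr(c)=\addr(c')-1\bmod Q$, and a short check on the two cases $\addr(c)<Q-1$, $\addr(c)=Q-1$ then yields $\age(c)=Q-\addr(c)$ as well as the facts that $c'=\eta^{t-1}_{i+1}$ is consistent with $c$ and, having age $<Q$, is left creative. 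If instead $b'\neq B_\alp$, then $\locf$ runs the main body on $b'$; here $\age(b')=\age(c)-1\in[0,Q-1]$, so line~\algref{alg:MainBody}{ln:NotCreative} is reached and clears $\lbor$, contradicting $\lbor(c)=\mtt{True}$ (and if $b'$ is not valid, \textsc{Die} makes the result blank, again a contradiction). So this subcase is vacuous, and the co-creative assertions are proved.

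For a left creative cell $c=\eta^t_i$ I argue in the same spirit. The relation $\age(c)=Q-\addr(c)\bmod Q$ comes from looking backwards: $c$ is either born through line~\algref{alg:Birth}{ln:BirthCond1}, which gives the relation as above, or it survives from a live, valid cell $b'$ with $\lbor(b')=\mtt{True}$; in the latter case $\age(b')<Q$ would again hit line~\algref{alg:MainBody}{ln:NotCreative} and clear $\lbor$ unless $\age(b')=U-1$, which forces $\addr(c)=0$ and $\age(c)=0$ via line~\algref{alg:MainBody}{ln:Protect1} (or via a left-blocking cell reaching age $U-1$), and the relation holds trivially there. For the assertion about $d=\eta^{t+1}_{i-1}$, I write $d=\locf(v')$ for an admissible predecessor triple of $(\eta^t_{i-2},\eta^t_{i-1},c)$. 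The principal case is when the rightmost entry of $v'$ is $c$ and the middle entry is blank: then \textsc{Birth} runs, line~\algref{alg:Birth}{ln:BirthCond1} fires because $\lbor(c)=\mtt{True}$ and the age/address relation just established holds, and $d$ comes out left co-creative. In every other case — the middle entry of $v'$ being the live cell $\eta^t_{i-1}$, or the rightmost being a blanked $c$ — I would show, using that $d$ being consistent with $c$ forces $\eta^t_{i-1}$ (if live) to pass \textsc{Valid} (otherwise \textsc{Die} makes $d$ blank via line~\algref{alg:MainBody}{ln:Invalid}), that either consistency of $d$ with $c$ fails, or the special exemption at $\frac{Q}{\age}\in\{1,2\}$ in lines~\algref{alg:Validity}{ln:BlankInvalidL}--\algref{alg:Validity}{ln:BlankInvalidR} must be in force, which pins $\age(c)=\tfrac12 Q$ and makes $\eta^t_{i-1}$ a right co-creative cell consistent with $c$. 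The symmetric statement then follows verbatim after swapping $\lbor\leftrightarrow\rbor$, $\live^-\leftrightarrow\live^+$, $(\cdot)^-\leftrightarrow(\cdot)^+$ and $Q-\addr\leftrightarrow\addr+1$ throughout, since Algorithms~\ref{alg:MainBody}--\ref{alg:Birth} are invariant under this exchange.

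The obstacle I expect here is bookkeeping rather than insight: being in $\hat\tra_\CA$ means each of the three relevant neighbourhood cells may independently be treated as $B_\alp$, and several branches of the main body and of \textsc{Valid} carry age-dependent guards ($\age<Q$, $\age=Q$, $\age=U-1$, $\frac{Q}{\age}\in\{1,2\}$) that all have to be checked, so the case tree is moderately large. What keeps it bounded is that the hypothesis on $c$ fixes $\lbor(c)$, $\live(c)$ and a range for $\age(c)$, leaving only a few compatible branches, together with the uniform observation that blanking a neighbour can only \emph{remove} information — turn a border flag off, or make a cell look non-live — which can only make \textsc{Valid} reject or push \textsc{Birth} into its no-op, and never manufactures creative structure out of nowhere.
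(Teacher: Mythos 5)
Your strategy — a direct case analysis of Algorithms~\ref{alg:MainBody}--\ref{alg:Birth}, splitting on whether the middle entry of the admissible predecessor triple is blank — is exactly what the paper means by ``verified by inspecting the program'', and your treatment of the second bullet and of the age--address relation $\age(c) = Q - \addr(c) \bmod Q$ is sound. The gap is in the dichotomy you propose for the second claim of the first bullet. You assert that in every case other than the \textsc{Birth}/\algref{alg:Birth}{ln:BirthCond1} case, ``either consistency of $d$ with $c$ fails, or the special exemption $\frac{Q}{\age}\in\{1,2\}$ must be in force''. That is not exhaustive: when the middle entry is the live cell $v=\eta^t_{i-1}$ but the rule of $\hat\CA$ blanks the right entry $c$, the check on line~\algref{alg:Validity}{ln:BlankInvalidR} is passed with no exemption whenever $\rbor(v)=\mtt{True}$ (since $\live^+$ then reads $\mtt{False}\neq\rbor(v)$), line~\algref{alg:Validity}{ln:BorderInvalid} is passed because the blanked $c$ has $\lbor$ reading $\mtt{False}$, and with a suitable left neighbour \textsc{Valid} returns $\mtt{True}$; line~\algref{alg:MainBody}{ln:NotCreative} then clears the flags and produces a $d$ that is live, consistent with $c$, and not left co-creative, even though no exemption was ever invoked.

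To salvage that branch you must argue differently: $v$ is then right creative, so the symmetric age--address relation $\age(v)=\addr(v)+1\bmod Q$ combined with $\age(c)=Q-\addr(c)\bmod Q$, $\age(v)=\age(c)$ and $\addr(v)=\addr(c)-1\bmod Q$ gives $2\,\addr(c)\equiv 0 \bmod Q$, hence $\age(c)\in\{0,\tfrac12 Q\}$; the value $\tfrac12 Q$ yields the stated conclusion, but the residual case $\age(c)=0$ — a left-creative and a right-creative cell of age $0$ facing each other at addresses $0$ and $Q-1$ — still has to be excluded. This configuration is not obviously impossible in $\hat\tra_\CA$: a right-blocking cell at address $Q-1$ adjacent to a left-blocking cell at address $0$ (as produced when two neighbouring colonies are rebuilt simultaneously) can survive the whole work period with both flags intact, because each application of $\hat\CA$ may blank the facing neighbour, and at age $U-1$ both flags persist into age $0$. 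Your closing heuristic that blanking ``only removes information'' and ``never manufactures creative structure'' does not dispose of this, since here the blanking is precisely what lets the creative structure persist. This corner case is the one place where the observation requires a genuine argument (or a more careful reading of the validity rules than your sketch provides), so as written the proposal is incomplete.
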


\begin{observation}
\label{obs:Inconsistent}
Let $\eta \in \hat \tra_\CA$ and $P = \eta^{[0,U-1]}_{[0,Q-1]}$.
Let $\delta \in \{-1, 0, 1\}$, and let $(i,t) \in [0,Q-1] \times [0,U-1]$ be such that exactly one of $c = \eta^{t+1}_{i+\delta}$ and $\eta^t_i$ is consistent with $P$.
Then at least one of the following conditions holds:
\begin{itemize}
\item $c$ is blank;
\item $c$ is co-creative, and $t \in [0, Q-1]$;
\item $\delta = -1$, $c$ is right blocking and $i+\delta \equiv Q-1 \bmod Q$;
\item $\delta = -1$, $\eta^t_{i+\delta}$ is right co-creative, and $t \in [1, Q]$;
\item $\delta = 1$, $c$ is left blocking and $i+\delta \equiv 0 \bmod Q$; or
\item $\delta = 1$, $\eta^t_{i+\delta}$ is left co-creative, and $t \in [1, Q]$.
\end{itemize}
\end{observation}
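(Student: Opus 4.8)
The plan is to verify the statement by a direct inspection of the local rule of $\CA$, organized by the way in which the cell $c = \eta^{t+1}_{i+\delta}$ is produced from row $t$. Throughout I would lean on a few elementary consequences of the program (Algorithms~\ref{alg:ParamsFields}--\ref{alg:Birth}): a non-live cell is literally $B_\alp$; if the rule produces a live cell through the \textbf{if $\live$} branch with \textsc{Valid} returning $\mtt{True}$ and without calling \textsc{Die}, then that cell keeps the $\addr$ value of its live predecessor and has its $\age$ incremented by one modulo $U$, whereas if \textsc{Valid} fails, or the predecessor is doomed with $\age = U-1$, the output is $B_\alp$; a live cell produced through the \textbf{$\neg\live$} branch is created by \textsc{Birth}, hence carries a border flag, and since each branch of \textsc{Birth} reads a neighbour whose $\age$ lies in $[0,Q-1]$, it is co-creative, with $\age \in [1,Q]$; and \textsc{Valid} forces a live cell with both border flags $\mtt{False}$ to share the $\age$ of its live neighbours and to carry the expected $\addr$-offset $\pm 1 \bmod Q$, while admitting a live cell next to a border cell only in the exceptional clauses it lists, which in particular pin $\addr$ to $0$ (respectively $Q-1$) whenever a left (right) blocking cell survives. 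Observation~\ref{obs:Creative}, and the fact that a blocking cell of age $Q$ is co-creative, are also used freely.

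Since $\eta \in \hat\tra_\CA$, fix a triple $v = (v_{-1},v_0,v_1) \in f(\eta^t_{i+\delta-1},\eta^t_{i+\delta},\eta^t_{i+\delta+1})$ with $c = \locf(v)$, where each $v_\epsilon$ is the corresponding cell of $\eta^t$ or $B_\alp$. If $c = B_\alp$ the first condition holds, so I assume $c$ is live and split on whether $v_0 = B_\alp$. If $v_0 = B_\alp$, then $c$ is created by \textsc{Birth} and is co-creative; inspecting which branch of \textsc{Birth} fired identifies whether it is $c$ or the cell $\eta^t_{i+\delta}$ that is co-creative and which border flag it has, and combining the co-creative age range $[1,Q]$ with the consistency of whichever of $c,\eta^t_i$ is consistent with $P$ — which ties the relevant $\age$ to $t$ or to $t+1$ and, since $Q < U$, excludes wraparound modulo $U$ — places $t$ in the range demanded by the second, fourth, or sixth condition. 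If instead $v_0 \neq B_\alp$, then $c$ is the one-step image of the live cell $v_0 = \eta^t_{i+\delta}$, which must be valid and not doomed with $\age = U-1$, so $\addr(c) = \addr(v_0)$ and $\age(c) = \age(v_0) + 1 \bmod U$. When $\delta = 0$ this makes $\eta^t_i \, (= v_0)$ and $c$ consistent with $P$ simultaneously or not at all, contradicting the hypothesis, so $\delta = 0$ does not occur here. When $\delta = \pm 1$, \textsc{Valid}$(v)$ together with $\addr(c) = \addr(v_0)$ shows that $v_0$ must carry a border flag — otherwise, whether or not the rule sees $\eta^t_i$, one checks that $\eta^t_i$ and $c$ would be consistent with $P$ together — and it is, by \textsc{Valid}$(v)$, the flag appropriate to the direction $\delta$; if $\age(v_0) \ge Q$ then \textsc{Valid}$(v)$ pins $\addr(v_0)$ and, following the border flag through Algorithm~\ref{alg:MainBody}, $c$ is a blocking cell with the address constraint of the third or fifth condition, while if $\age(v_0) < Q$ then $v_0 = \eta^t_{i+\delta}$ is co-creative and the fourth or sixth condition applies. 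The cases $\delta = 1$ are the mirror images of those with $\delta = -1$.

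I expect the laborious part to be the exhaustiveness of this case analysis, with two recurring sources of friction. First, because we work in $\hat\tra_\CA$, the instance of the rule producing $c$ may treat an arbitrary subset of $\eta^t_{i+\delta-1},\eta^t_{i+\delta},\eta^t_{i+\delta+1}$ as $B_\alp$ even when those cells are well-formed, so at each step one must explicitly record which cells are actually seen and argue separately for each pattern of blanking. Second, the notions of creative, co-creative, and blocking cell overlap — a cell with a border flag and age in $[1,Q-1]$ is both creative and co-creative, one of age $Q$ is both co-creative and blocking — so one must be careful to funnel every configuration into exactly one of the six listed outcomes, with nothing left over; in particular the ``created from both sides'' exception of \textsc{Valid} at $\age \in \{Q/2, Q\}$, the $\age = Q$ border-flag clearing in Algorithm~\ref{alg:MainBody}, and both branches of \textsc{Birth} all have to be matched against the list. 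Beyond that, each individual step is a short syntactic check against the program.
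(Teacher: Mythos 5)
The paper gives no argument for this statement: it is introduced only by the remark that the two observations ``can be verified by inspecting the program of $\CA$'', so your plan — a case analysis on how $c$ is produced (the \textsc{Birth} branch versus the live branch, with the $\hat\CA$-blanking choices recorded explicitly) — is exactly the intended kind of verification, and is in fact more detailed than anything the paper writes down. Most of your outline is what such an inspection yields: the blank output case, the live-branch bookkeeping $\addr(c)=\addr(v_0)$ and $\age(c)=\age(v_0)+1 \bmod U$, and the border-flag dichotomy behind the blocking/co-creative alternatives for $\delta=\pm1$.

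The one step I would not accept as written is the sentence in your \textsc{Birth} case asserting that ``the consistency of whichever of $c,\eta^t_i$ is consistent with $P$ \ldots{} ties the relevant $\age$ to $t$ or to $t+1$'' and thereby places $t$ in the range of the second, fourth, or sixth condition. That inference is only automatic when the consistent cell is $c$ itself: then $\age(c)\in[1,Q]$ together with $\age(c)\equiv t+1 \bmod U$ forces $t\in[0,Q-1]$. If instead $\eta^t_i$ is the consistent cell and $c$ arises from \textsc{Birth} (the rule at position $i+\delta$ regarding $\eta^t_{i+\delta}$ as blank and reacting to a creative cell at $\eta^t_{i+\delta\mp1}$), then the co-creative age of $c$ by itself says nothing about $t$; for $\delta=\pm1$ you must argue separately that the creative trigger is $\eta^t_{i+\delta}$ itself (yielding the fourth or sixth condition) or that a blocking clause applies, and for $\delta=0$ no fallback condition is even listed, so the time bound has to come from relating the creative neighbour at row $t$ to the consistent structure — a genuinely delicate point, since adjacent cells in one row of a $\hat\CA$-trajectory are not directly constrained by each other. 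This sub-case needs its own argument (or a precise accounting of how the observation is later invoked), and it is exactly the kind of detail that the phrase ``verified by inspection'' conceals; spell it out before treating the verification as complete.
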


Finally, we prove that the automaton $\CA$ actually simulates $\intp^\infty_p$ with the parameters $Q$ and $U$, provided that they are sufficiently large.
Recall that $\size{p}$ is defined as $\size{\varalp}$.

\begin{lemma}
\label{lem:UnivSimu}
Define $C \subset \alp^{Q \times U}$ as the set of rectangular patterns $P \in \alp^{Q \times U}$ whose $2Q$'th row from the top satisfies $\live(P^{U-2Q}_i) = \mtt{True}$, $\age(P^{U-2Q}_i) = U - 2Q$ and $\addr(P^{U-2Q}_i) = i$ for all $i \in [0, Q-1]$.
For all $P \in C$, define $\pi(P) = \simu(P^{U-2Q}_0) \simu(P^{U-2Q}_1) \cdots \simu(P^{U-2Q}_{\size{p} - 1}) \in \varalp$, the word formed by the $\simu$-bits of the first $\size{p}$ cells of said row.
For $P \in \alp^{Q \times U} \setminus C$, define $\pi(P) = B_\varalp$.
If the values $Q$ and $U$ satisfy
\begin{equation}
\label{eq:QBound}
Q \geq P^S_\intp(|p|, \size{p})
\end{equation}
and
\begin{equation}
\label{eq:UBound}
U \geq 6Q + P^T_\intp(|p|, \size{p}),
\end{equation}
then $(Q, U, (0,U-2Q), C, \pi)$ is a simulation of $\intp^\infty_p$ by $\CA$.
\end{lemma}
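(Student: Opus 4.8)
The plan is to verify the two defining conditions of a simulation from the \textbf{Definition} in Section~\ref{sec:Simu}: first that $\pi(\alp^{Q\times U}\setminus C)=B_\varalp$ and $P^{U-2Q}_0\neq B_\alp$ for every $P\in C$ (both immediate from the definitions of $C$ and $\pi$, since membership in $C$ forces $\live(P^{U-2Q}_0)=\mathtt{True}$), and second the crucial inclusion $\tra^+_{\intp^\infty_p}\subset\pi(\tra^+_\CA)$. The heart of the argument is therefore: given an arbitrary one-directional trajectory $\eta\in\tra^+_{\intp^\infty_p}$ over $\varalp$, construct a trajectory $\xi\in\tra^+_\CA$ with $\pi(\xi)=\eta$. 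I would build $\xi$ row by row from a carefully chosen initial row $\xi^0$ and then simply let $\CA$ run; the content is in choosing $\xi^0$ so that (a) the colony structure is globally correct, (b) the simulation bits of the $i$-th colony spell out $\eta^0_i$, and (c) the age field is offset so that row $U-2Q$ of the first ``block'' of $U$ rows is exactly the canonical row described in the definition of $C$.

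Concretely, I would place, for each $i\in\Z$, a colony $w^{(i)}\in\alp^Q$ at cells $[iQ,(i+1)Q-1]$ with $\addr(w^{(i)}_j)=j$, all cells live, all with a common age $a_0$ chosen so that after the simulation steps the ``assume new state'' line (line~\algref{alg:MainBody}{ln:Doomed}ff.) has just fired; set $\simu(w^{(i)}_j)=\lmail(w^{(i)}_j)=\rmail(w^{(i)}_j)$ equal to the $j$-th bit of $\eta^0_i$ for $j<\size{\varalp}$ and $0$ otherwise; set all border and doom flags to $\mathtt{False}$; and put the workspace in the cleared state $(\#,B_\mathrm{A})$ except for the leftmost cell. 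The point of bounds~\eqref{eq:QBound} and~\eqref{eq:UBound} is exactly that this works: \eqref{eq:QBound} guarantees the agent's Turing-machine simulation of $\intp$ fits in $Q$ tape cells so \textsc{Compute} never calls \textsc{Die} for lack of room, and \eqref{eq:UBound} guarantees the work period is long enough to fit the $Q$-step retrieval phase, the $\leq P^T_\intp(|p|,\size{p})$-step computation phase, and the $O(Q)$ overhead steps (clearing, initialization, output-writing, state assumption) with room to spare. One then checks, by unwinding one work period, that starting from $\xi^0$ the machine: retrieves the neighbours' simulation bits into the mail fields, runs the agent which computes $\intp_p$ applied to (left state, own state, right state) $=\intp_p(\eta^0_{i-1},\eta^0_i,\eta^0_{i+1})=\eta^1_i$, and writes this into the $\simu$ fields — except that colonies whose new state is $B_\varalp$ get doomed and replaced by $B_\alp^Q$, while their live neighbours set border flags and subsequently recreate them via \textsc{Birth} as colonies simulating $B_\varalp$. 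Since $\pi$ reads off the $\simu$ bits of the canonical row, and since a colony simulating $B_\varalp$ either literally is $B_\alp^Q$ (so lies outside $C$ and $\pi$ gives $B_\varalp$) or is a freshly rebuilt colony with all $\simu$ bits $0$ (so $\pi$ again gives $B_\varalp=\eta^1_i$), we get $\pi(\xi)^1=\eta^1$, and induction on $t$ finishes the construction.

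The main obstacle I anticipate is the bookkeeping around the blank-state simulation: one must make sure that along the whole trajectory, at \emph{every} time step $t$, the row $\xi^{tU+b}$ with $b=U-2Q$ (relative to the global age offset $a_0$) is genuinely the canonical row of the simulation — in particular that colonies which were destroyed and recreated are back in a ``full colony'' configuration with correct addresses, ages, and the rebuilt $\simu$ bits, by the time that row is reached. This requires tracking the colony-creation process (the creative/co-creative/blocking cells, speed-$1$ propagation of the creative marker, and the special age-$\tfrac12Q$ two-sided-creation case) and confirming the timing budget in~\eqref{eq:UBound} leaves enough slack that recreation always completes within one work period; the $6Q$ term is presumably exactly what is needed for this. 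Everything else — that \textsc{Valid} does not kill any cell of the intended trajectory, that the agent's interpreter computation is correctly simulated by the Turing machine head on the $\work$ tape, that the mail-shifting moves bits by exactly $Q$ — is routine verification against the programs in Algorithms~\ref{alg:ParamsFields}--\ref{alg:Birth}, and I would state it as such rather than belabour it.
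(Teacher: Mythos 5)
Your proposal is correct and follows essentially the same route as the paper: check the trivial conditions on $\pi$ and $C$, then for an arbitrary $\eta\in\tra^+_{\intp^\infty_p}$ build an explicit initial row of synchronized colonies whose $\simu$ (and mail) bits encode $\eta^0$, run $\CA$, and induct over work periods, with \eqref{eq:QBound} giving the agent space, \eqref{eq:UBound} giving the time budget, and doomed colonies / blank segments / colony recreation accounting for the blank-state simulation. The paper's induction hypothesis is phrased on the row of age $U-1$ of each block (colony with $\eta^t_i$ in $\simu$ and $\eta^{t+1}_i$ in $\out$, or a blank segment not bordering a non-blank colony), but this is only a cosmetic difference from your bookkeeping.
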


\begin{proof}
Denote $\varCA = \intp^\infty_p$.
The condition $\pi(\alp^{Q \times U} \setminus C) = B_\varalp$ holds by definition, so we only need to show $\tra^+_\varCA \subset \pi(\tra^+_\CA)$.
For that, let $\eta \in \tra^+_\varCA$ be arbitrary.
Define the one-dimensional configuration $x \in \alp^\Z$ as follows.
For each $i \in \Z$ and $j \in [0,Q-1]$, define the cell $s = x_{iQ + j} \in \alp$ by
\[ \begin{array}{rclrclrcl}
\live(s) &=& \mtt{True},  & \out(s)  &=& 0,                       & \simu(s)  &=& (\eta^0_i)_j, \\
\addr(s) &=& j,           & \work(s) &=& (\#, \Sigma_\mathrm{A}), & \lmail(s) &=& \simu(s), \\
\age(s)  &=& 0,           & \prog(s) &=& 0,                       & \lmail(s) &=& \simu(s), \\
\lbor(s) &=& \mtt{False}, & \rbor(s) &=& \mtt{False},             & \doom(s)  &=& \mtt{False}. \\
\end{array} \]

Now, each segment $x_{[iQ,(i+1)Q-1]} \in \alp^Q$ is a colony with age $0$ and $\varalp$-state $\eta^0_i$.
Since the mail fields of these colonies have the same contents as their simulation fields, the information retrieved by their neighbors is correct, and after $U$ steps, every colony computes itself a new state.
The inequality~\eqref{eq:QBound} guarantees that the agents have enough space to perform the computations, while~\eqref{eq:UBound} guarantees the time.
There are also $6Q$ extra steps coming from the retrieval phase, the marking of doomed cells, the fact that the $2Q$'th row from the top must consist of live cells, and some extra steps needed in the proofs of Section~\ref{sec:UnivProp} and Section~\ref{sec:SparseAmp}.
If we let $\xi \in \tra^+_\CA$ be the trajectory whose bottom row is $x$, this shows that $\pi(\xi)^0 = \eta^0$, that is, the bottom rows of $\pi(\xi)$ and $\eta$ coincide.

Now, we show by induction that $\pi(\xi)^t = \eta^t$ holds for all $t \in \N$.
More explicitly, we show that for all $i \in \Z$, the segment $\xi^{(t+1)U-1}_{[iQ,(i+1)Q-1]} \in \alp^Q$ is either a colony with age $U-1$, containing the state $\eta^t_i$ in its $\simu$ fields and $\eta^{t+1}_i$ in its $\out$ fields, or the blank segment $\B_\alp^Q$, which cannot border a colony in a non-blank state.
In the first case, if $\eta^{t+1}_i = B_\varalp$, then the cells of the colony are doomed, and it will be destroyed in the next step.
The latter case can only happen when $\eta^t_i = B_\varalp$.
If a doomed colony or a blank segment lies next to a colony that is not doomed, a new colony will be created in its place in the next $Q$ steps that form the retrieval phase.
During this phase, the colonies will also retrieve the state data of their neighbors.
During the computation phase, the colonies calculate new states for themselves, and the blocking cells keep the blank areas from spreading over the colonies.
From the induction hypothesis, and the fact that $B_\varalp$ is quiescent for $\varCA$, it follows that these states form the row $\eta^{t+1}$.
\end{proof}

If the bounds in Equations~\eqref{eq:QBound} and~\eqref{eq:UBound} hold, we denote $\Psi = \univ^{Q,U}_p$ (or just $\univ_p$, if $Q$ and $U$ are clear from the context or irrelevant), and call $\Psi$ a \emph{universal simulator automaton (with input $p$)}.
The choice for the $2Q$'th row from the top, as opposed to the top row, may seem arbitrary, but it is actually necessary in some of the results of Section~\ref{sec:UnivProp}.
In all applications of universal simulators, the dimensions $Q$ and $U$ are assumed to be large (the bound $Q \geq 10$ should be sufficient).
Since $P_\intp^T$ grows faster than $P_\intp^S$, the parameter $U$ will usually be orders of magnitude larger than $Q$.

In Figure~\ref{fig:Simulation}, we have illustrated the simulation of the three-neighbor binary XOR automaton by a universal simulator, in a schematic form.


\begin{figure}[ht]
\begin{center}
\begin{tikzpicture}

\begin{scope}[shift={(4.5,-3+.625)},xscale=0.3,yscale=1.2]

\foreach \x/\y in {0/2,1/0,1/1,1/2,1/4,2/3,2/4,3/0,3/1,3/4,4/1,5/4,6/1,6/3,6/4,7/0,7/1,7/2,7/4,8/1,8/2,9/1,10/0,10/2}{
\begin{scope}[shift={(\x,\y)}]
\fill[color=gray] (0,0) rectangle (1,1);
\draw[densely dotted] (0,.25) -- (1,.25);
\draw[color=gray!50!black] (0,.25) -- ++(.5,.125) -- ++(-.25,.125) -- ++(.5,.125) -- ++(-.75,.125) -- ++(.5,.125) -- ++(-.5,.125);
\draw (0,0) rectangle (1,1);

\end{scope}
}
\foreach \x/\y in {0/3,3/2,4/2,4/3,5/0,5/2,8/3,9/3,9/4,10/3,10/4}{
\begin{scope}[shift={(\x,\y)}]

\end{scope}
}
\foreach \x/\y in {0/0,0/1,0/4,1/3,5/3,6/0,6/2,9/0}{
\begin{scope}[shift={(\x,\y)}]
\fill[color=gray!50] (0,.5) -- (0,1) -- (1,1) -- (1,0) -- (0,.25);
\draw[densely dotted] (0,.25) -- (1,.25);
\draw[color=gray] (0,.25) -- ++(.5,.125) -- ++(-.25,.125) -- ++(.5,.125) -- ++(-.75,.125) -- ++(.5,.125) -- ++(-.5,.125);
\draw (0,.25) -- (0,1) -- (1,1) -- (1,0) -- (0,.25);

\end{scope}
}
\foreach \x/\y in {2/2,3/3,4/0,7/3,8/0,8/4,10/1}{
\begin{scope}[shift={(\x,\y)}]
\fill[color=gray!50] (0,0) -- (0,1) -- (1,1) -- (1,.25) -- (0,0);
\draw[densely dotted] (0,.25) -- (1,.25);
\draw[color=gray] (0,.25) -- ++(.5,.125) -- ++(-.25,.125) -- ++(.5,.125) -- ++(-.75,.125) -- ++(.5,.125) -- ++(-.5,.125);
\draw (0,0) -- (0,1) -- (1,1) -- (1,.25) -- (0,0);

\end{scope}
}
\foreach \x/\y in {2/0,2/1,4/4,5/1,9/2}{
\begin{scope}[shift={(\x,\y)}]
\fill[color=gray!50] (0,0) -- (0,1) -- (1,1) -- (1,0) -- (.5,.125) -- (0,0);
\draw[densely dotted] (0,.25) -- (1,.25);
\draw[color=gray] (0,.25) -- ++(.5,.125) -- ++(-.25,.125) -- ++(.5,.125) -- ++(-.75,.125) -- ++(.5,.125) -- ++(-.5,.125);
\draw (0,0) -- (0,1) -- (1,1) -- (1,0) -- (.5,.125) -- (0,0);

\end{scope}
}
\draw (0,0) rectangle (11,5);

\end{scope}

\foreach \x/\y in {
1/0,3/0,7/0,10/0,
1/1,3/1,4/1,6/1,7/1,8/1,9/1,
0/2,1/2,7/2,8/2,10/2,
2/3,6/3,
1/4,2/4,3/4,5/4,6/4,7/4
}{
\fill[gray] (.25*\x,.25*\y) rectangle ++(.25,.25);
}
\draw[step=.25] (0,0) grid (2.75,1.25);

\node () at (3.6,.625) {$\stackrel{\pi}{\longleftarrow}$};

\end{tikzpicture}
\end{center}
\caption{Simulating the three-neighbor XOR automaton. The dotted lines represent transitions between retrieval and computation phases, and the jagged lines represent the agents. Dark macro-cells have state $1$, and light ones have $0$.}
\label{fig:Simulation}
\end{figure}

\subsection{Properties of Universal Simulation}
\label{sec:UnivProp}

Next, we show that simulation by a universal simulator automaton is very well-behaved.
For this section, let $\CA = \univ^{Q,U}_p$ be a universal simulator CA for the automaton $\intp^\infty_p$, using the simulation $(Q,U,(0,U-2Q),C,\pi)$.
Be begin with the following rather technical lemma that restricts the structure of those macro-cells that actually appear in nondeterministic trajectories of $\CA$.

\begin{lemma}
\label{lem:MacroCells}
Let $\eta \in \hat \tra^+_\CA$ be such that $P = \eta^{[U,2U-1]}_{[0,Q-1]} \in C$.
Then for all $t \in [Q,U-2Q]$, the row $P^t$ is a colony with age $t$ and simulated state $\pi(P) \in \varalp$.
Also, exactly one of the following conditions holds.
\begin{enumerate}
\item $\pi(P) \neq B_\varalp$, and for all $t \in [0,Q-1]$, $P^t$ is a colony with age $t$ and simulated state $\pi(P)$, and $P$ contains no co-creative cells.
Also, the $Q \times U$ rectangle below $P$ satisfies $\eta^{[0,U-1]}_{[0,Q-1]} \in C$.
\item $\pi(P) = B_\varalp$, and for all $t \in [0,Q-1]$ and $i \in [t-Q,t-1]$, the cell $\eta^{U + t}_i$ is consistent with $P$, and $\eta^{U + t}_{t-1}$ is right creative.
\item $\pi(P) = B_\varalp$, and for all $t \in [0,Q-1]$ and $i \in [Q-t,2Q-t-1]$, the cell $\eta^{U + t}_i$ is consistent with $P$, and $\eta^{U + t}_{Q-t}$ is left creative.
\item $\pi(P) = B_\varalp$, for all $t \in [0,Q-1]$ and $i \in [t-Q,t-1] \cup [Q-t,2Q-t-1]$, the cell $\eta^{U + t}_i$ is consistent with $P$.
Also, the cell $\eta^{U + t}_{t-1}$ ($\eta^{U + t}_{Q-t}$) is right (left, respectively) creative for all $t \in [0, \frac12Q]$.
\end{enumerate}
\end{lemma}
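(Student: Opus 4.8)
The plan is a backward (downward) induction over the rows of $P$, with Observation~\ref{obs:Inconsistent} and Observation~\ref{obs:Creative} doing most of the work. First I would handle the rows of age in $[Q,U-2Q]$. By $P\in C$ the row $P^{U-2Q}$ is a colony of age $U-2Q$, and the bound on $U$ gives $U-2Q>Q$. For each column I would apply Observation~\ref{obs:Inconsistent} with $\delta=0$: the cell of the current row in that column is live (hence not blank) and of age exceeding $Q$ (hence not co-creative), so if the cell directly below it failed to be consistent with $P$, none of the alternatives of the observation could hold, a contradiction. Hence the whole row below is again a colony with age one less, and iterating down to age $Q$ — the row above always has age $>Q$, so the same exclusion applies — shows $P^t$ is a colony of age $t$ for all $t\in[Q,U-2Q]$. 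For the simulated state, inspecting Algorithm~\ref{alg:MainBody} shows that the $\simu$ field is modified only at age $U-1$, and that a cell of age exceeding $Q$ cannot be produced by \textsc{Birth} (which yields only ages in $[1,Q]$); so $\simu$ is transported unchanged along these rows, and each $P^t$ carries the simulated state $\pi(P)$.

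Descending one more step enters the retrieval phase, where co-creative cells become possible, so the above no longer forces a colony; the four cases are separated by whether $P$ contains a co-creative cell. By the $\simu$-invariance just noted, $\pi(P)$ already equals the simulated state carried by the bottom row $P^0$ (trace $\simu$ backwards through any age other than $U-1$). Now a colony produced by the colony-creation mechanism is born with all $\simu$-bits $0$, i.e.\ with simulated state $B_\varalp$, which it keeps until age $U-1$; conversely a colony present and non-doomed at the start of a work period has a non-blank simulated state, since a colony computing $B_\varalp$ raises its doom flags and is destroyed at age $U-1$. So I would argue that $\pi(P)\neq B_\varalp$ holds exactly when no creation is in progress, and — by Observation~\ref{obs:Creative}, which forces every co-creative cell to be created by a creative cell one row below, and hence forces the whole creation process — exactly when $P$ contains no co-creative cell.

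In the case $\pi(P)\neq B_\varalp$ there are thus no co-creative cells, so the descent of the first paragraph continues unobstructed all the way to row $0$, giving that each $P^t$ with $t\in[0,Q-1]$ is a colony of age $t$ and simulated state $\pi(P)$ and that $P$ has no co-creative cells. For the rectangle below, I would restart the same descent from $\eta^U=P^0$, now known to be a colony of age $0$: going down to $\eta^{U-2Q}$ the ages encountered lie in $\{0\}\cup[U-2Q,U-1]$, all outside $[1,Q]$, so again no co-creative obstruction arises, and we obtain that $\eta^{U-2Q}_{[0,Q-1]}$ is a colony of age $U-2Q$, i.e.\ $\eta^{[0,U-1]}_{[0,Q-1]}\in C$. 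This is case~1.

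In the case $\pi(P)=B_\varalp$ a colony creation is taking place below row $Q$, and I would track its frontier(s) downward with Observation~\ref{obs:Creative}. That observation pins the address of a left (right) creative cell to $Q-\age\bmod Q$ (resp.\ $\age-1\bmod Q$), so at each age there is at most one creative cell of each handedness, each forces a matching cell shifted by one column one row below, and a co-creative cell is linked to a creative cell below it. Reading off positions yields exactly the stated windows, with $\eta^{U+t}_{t-1}$ right creative (creation from the left; case~2), the mirror windows with $\eta^{U+t}_{Q-t}$ left creative (creation from the right; case~3), or both frontiers present, which then necessarily meet at age $\frac12 Q$ and disappear for larger $t$ (two-sided creation; case~4). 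That these three possibilities are mutually exclusive and exhaustive comes down to the fact that, by procedure \textsc{Valid}, a border-flagged cell of age in $[1,Q]$ that survives must lie at the boundary of a colony whose opposite side is blank, and by procedure \textsc{Birth} such a boundary can only be a leftward- or rightward-moving creation frontier. I expect this last step to be the main obstacle: certifying directly from the rules (procedures \textsc{Valid} and \textsc{Birth} and the body of Algorithm~\ref{alg:MainBody}) that the creation frontier is located exactly where the observations place it and that no other border-flag configuration can survive, so that the four cases genuinely partition all the possibilities.
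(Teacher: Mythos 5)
Your plan follows the paper's proof essentially step for step: the same downward induction via Observations~\ref{obs:Creative} and~\ref{obs:Inconsistent} for the rows of age in $[Q,U-2Q]$, the same tracing of creation frontiers to split into the four cases, with $\pi(P)=B_\varalp$ forced because created cells carry $\simu$-bit $0$, and $\pi(P)\neq B_\varalp$ in case~1 obtained from the doom-flag contradiction after showing $\eta^{[0,U-1]}_{[0,Q-1]}\in C$. The one step you flag as the main obstacle --- showing that a double frontier cannot survive past age $\frac12 Q$, so the cases genuinely partition --- is resolved in the paper by assuming both border cells at age $Q$ are co-creative and consistent, tracing both frontiers downward, and observing that the resulting row $P^{\frac12 Q}$ would have no preimage under $\hat\CA$.
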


The lemma intuitively states that a macro-cell consists mostly of live cells that are consistent with it, and that there are four possibilities for the shape of its $Q$ lowest rows.
The four cases correspond to $P$ not being created, being created from the left, created from the right, and created from both directions, respectively.
Figure~\ref{fig:Simulation} contains examples of all the cases.

\begin{proof}
The first claim is an easy induction argument, since $P^{U-2Q}$ is of said form by the definition of $C$, and if $P^{t-1}$ is not of that form, then neither is $P^t$, by Observations~\ref{obs:Creative} and~\ref{obs:Inconsistent}.
Namely, if a cell $P^{t-1}_i$ is inconsistent with $P$, then $P^t_i$ must be co-creative, which is impossible since $t > Q$.

Consider then the border cells of $P$, and suppose that $P^t_0$ and $P^s_{Q-1}$ are left and right co-creative, respectively, and consistent with $P$, for some $t, s \in [1,Q]$.
Then necessarily $t = s = Q$ by Observation~\ref{obs:Creative}.
We can show by induction that $P^t_{t-1}$ is right co-creative and $P^t_{Q-t}$ is left co-creative for all $t \in [\frac12 Q, Q-1]$, but then the row $P^{\frac12 Q}$ has no preimage under $\hat \CA$, a contradiction.
Thus both borders cannot contain co-creative consistent cells.

Next, suppose that the right border cell $P^Q_{Q-1}$ is right co-creative and consistent with $P$.
Now we can inductively show, using the two observations and the rules of $\CA$, that for all $t \in [0,Q-1]$ and $i \in [t-Q,t-1]$, the cell $P^t_i$ is consistent with $P$ (this is an induction first on $i$, then on $t$).
Furthermore, Observation~\ref{obs:Creative} shows that $P^t_{t-1}$ is right creative and thus satisfies $\simu(P^{t+1}_t) = 0$.
This in turn implies that $\pi(P) = B_\varalp$, since $\simu$-bits do not change during the work period, and we are in the second case above.
Symmetrically, if the left cell $P^Q_0$ is left co-creative and consistent, we are in the third case.

Suppose finally that neither border cell is both consistent and co-creative.
Then the two observations show that $P^t_0$ and $P^t_{Q-1}$ are consistent for all $t \in [1,Q-1]$.
Furthermore, each row $P^t$ with $t \in [\frac12Q+1,Q-1]$ consists of cells consistent with $P$, and none of them are creative.

Observation~\ref{obs:Creative} gives two possibilities for the row $w = P^{\frac12Q}$: either $w_{\frac12Q-1}$ is right co-creative and $w_{\frac12Q}$ is left co-creative, in which case we can prove as above that the fourth case holds, or $w$ does not contain co-creative cells.
In the latter case, the induction can continue, and $P^t$ is a colony with age $t$ for all $t \in [0,\frac12Q]$.
It remains to show that in this case we have $\pi(P) \neq B_\alp$ and $R = \eta^{[0,U-1]}_{[0,Q-1]} \in C$, and for that, note that the top row of $R$ is a colony with age $U-1$ that cannot be doomed.
As in the proof of the first claim of this lemma, we can show that $R^t$ is a colony with age $t$ for all $t \in [Q,U-1]$, and thus $R \in C$ and the agent of $R$ correctly computes $\pi(P)$ from the different fields of $R$.
Now, if we had $\pi(P) = B_\alp$, then the agent of $R$ would have marked the cells of $R^{U-1}$ with the doom flag, a contradiction.
\end{proof}

As an immediate corollary, we obtain that two distinct macro-cells cannot overlap very much.
This corollary is later used to prove that in a trajectory our final sparse automaton, there cannot exist too many simultaneous simulation structures that are not consistent with each other.

\begin{corollary}
\label{cor:NoOverlap}
Let $\eta \in \hat \tra_\CA$, and suppose we have $\eta^{[t,t+U-1]}_{[i,i+Q-1]} \in C$ and $\eta^{[s,s+U-1]}_{[j,j+Q-1]} \in C$ with $|i-j| < Q$ and $|t-s| < U-3Q$.
Then $(i,t) = (j,s)$.
\end{corollary}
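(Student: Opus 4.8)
The plan is to derive a contradiction from assuming two overlapping macro-cells that are not identical, by exploiting the rigidity of macro-cell structure established in Lemma~\ref{lem:MacroCells}. First I would set up notation: write $P = \eta^{[t,t+U-1]}_{[i,i+Q-1]}$ and $P' = \eta^{[s,s+U-1]}_{[j,j+Q-1]}$, both in $C$, with $|i-j| < Q$ and $|t-s| < U - 3Q$, and suppose $(i,t) \neq (j,s)$. By symmetry (and by passing to a shifted copy of $\eta$ if needed, or just by relabelling) I may assume $t \le s$. The key point is that Lemma~\ref{lem:MacroCells}, applied to $P$, tells us that the rows $P^\tau$ for $\tau \in [Q, U-2Q]$ are all colonies with age $\tau$ and simulated state $\pi(P)$, so in particular a large horizontal band of $\eta$ — namely rows $t+Q$ through $t + U - 2Q$, columns $i$ through $i+Q-1$ — consists of cells consistent with $P$ in the sense of the consistency definition (correct addresses modulo $Q$, correct ages modulo $U$). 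The same holds for $P'$ in its own band. Since $|t-s| < U - 3Q$, these two bands overlap in a non-trivial rectangle of height at least, say, $Q$ and full width $Q$ (the overlap of $[t+Q, t+U-2Q]$ and $[s+Q, s+U-2Q]$ has length $\ge U - 2Q - |t-s| > Q$), and because $|i-j| < Q$ they also overlap horizontally.

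Next I would extract the contradiction from the overlap region. Pick a cell $\eta^\tau_k$ lying in the overlap of the two `good' bands. On one hand it is consistent with $P$, so $\addr(\eta^\tau_k) \equiv k - i \bmod Q$ and $\age(\eta^\tau_k) \equiv \tau - t \bmod U$; on the other it is consistent with $P'$, so $\addr(\eta^\tau_k) \equiv k - j \bmod Q$ and $\age(\eta^\tau_k) \equiv \tau - s \bmod U$. Subtracting gives $i \equiv j \bmod Q$ and $t \equiv s \bmod U$. Combined with $|i-j| < Q$ this forces $i = j$, and combined with $|t - s| < U - 3Q < U$ it forces $t = s$ — contradicting $(i,t) \neq (j,s)$. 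So the heart of the argument is just: two macro-cells each pin down the address/age phase of a common cell, and the phases must agree, which over a window smaller than the period forces exact coincidence.

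I expect the main obstacle to be bookkeeping rather than conceptual: making sure the overlap region genuinely falls inside the `consistent' bands guaranteed by Lemma~\ref{lem:MacroCells} for \emph{both} macro-cells, and in particular that the height bounds work out. The band for $P$ where consistency is guaranteed runs from age $Q$ to age $U - 2Q$ (rows $t+Q$ to $t+U-2Q$), and similarly for $P'$; I need the hypothesis $|t-s| < U - 3Q$ precisely to ensure these bands overlap in at least one row — indeed one checks the overlap has height $\ge (U - 3Q) - |t-s| > 0$, which is exactly where the $-3Q$ comes from, leaving a safety margin of $Q$. A secondary subtlety is that Lemma~\ref{lem:MacroCells} is stated for a macro-cell sitting at rows $[U, 2U-1]$ with the band $[Q, U-2Q]$; since $\eta$ is shift-invariant as a trajectory (one can translate the spacetime diagram), applying it to $P$ and $P'$ at arbitrary positions is harmless, but I would remark on this explicitly. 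No case analysis on which of the four cases of Lemma~\ref{lem:MacroCells} holds is needed, since all four share the conclusion that the band $[Q, U-2Q]$ consists of colonies consistent with the macro-cell.
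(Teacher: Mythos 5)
Your proof is correct and takes essentially the same route as the paper, which states the corollary as an immediate consequence of Lemma~\ref{lem:MacroCells}: both macro-cells force the colony structure (exact $\addr$ and $\age$ values) on the band of rows at ages $[Q,U-2Q]$, and any cell in the overlap of the two bands pins both phases, forcing $i=j$ and $t=s$. One minor slip: your first estimate of the vertical overlap, $U-2Q-|t-s|$, overcounts by $Q$ (the band has only $U-3Q+1$ rows), but you later state the correct bound $(U-3Q)-|t-s|>0$, and a single common row suffices, so the argument stands.
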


Using Lemma~\ref{lem:MacroCells} and the two observations from Section~\ref{sec:UnivConstr}, we can prove the strong rigidity and strong connectivity of the universal simulation.

\begin{lemma}
\label{lem:UnivRigidity}
The simulation of $\varCA = \intp^\infty_p$ by $\CA = \univ^{Q,U}_p$ is strongly rigid.
\end{lemma}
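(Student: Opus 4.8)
The plan is to check the defining inclusion $\pi(\hat\tra_\CA) \subset \tilde\tra_\varCA$ (with $\varCA = \intp^\infty_p$) directly. Fix $\eta \in \hat\tra_\CA$ and indices $i, t \in \Z$. Since the blank state is a legal value for every cell of $\tilde\varCA$, there is nothing to prove unless $\pi(\eta)^{t+1}_i \neq B_\varalp$, which I assume. Then the macro-cell $P = \eta^{[(t+1)U,(t+2)U-1]}_{[iQ,(i+1)Q-1]}$ lies in $C$ and satisfies $\pi(P) = \pi(\eta)^{t+1}_i \neq B_\varalp$, so Lemma~\ref{lem:MacroCells} forces its case~(1): $P$ is a ``complete'' macro-cell with no co-creative cells, and the $Q \times U$ rectangle directly below it, $R = \eta^{[tU,(t+1)U-1]}_{[iQ,(i+1)Q-1]}$, also lies in $C$. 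This reduces the lemma to proving $\pi(\eta)^{t+1}_i = \intp_p(\pi(\eta)^t_{i-1},\pi(\eta)^t_i,\pi(\eta)^t_{i+1})$.

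Next I would read off what $R$ computes. Applying Lemma~\ref{lem:MacroCells} to $R$ shows that, in each of its four cases, every row of $R$ of age between $Q$ and $U-2Q$ is a genuine colony carrying the simulated state $\pi(R)$, and at age $Q$ the rule reinstalls the program $p$ in the $\prog$ fields. Hence throughout the computation phase of $R$ the agent faithfully simulates the interpreter $\intp$ on the four binary tapes formed by the $\lmail$, $\simu$, $\rmail$, $\prog$ fields; because $p$ is valid this computation halts within the $P^T_\intp(|p|,\size{p})$ steps that \eqref{eq:UBound} allots, and its result is written into the $\out$ fields, which become the $\simu$ fields of $P$ one step later. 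Since the $\simu$ and $\out$ fields of a surviving cell are constant over a work period apart from the single age-$(U-1)$ assignment $\simu,\rmail,\lmail \gets \out$, reading $\pi$ off the rows used to define it yields
\[ \pi(\eta)^{t+1}_i = \pi(P) = \intp_p(l,s,r) \qquad\text{and}\qquad s = \pi(R) = \pi(\eta)^t_i, \]
where $l,s,r \in \varalp$ are the bitstrings held by the $\lmail$, $\simu$, $\rmail$ fields of $R$ at the start of its computation phase.

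It remains to identify $l = \pi(\eta)^t_{i-1}$ and, symmetrically, $r = \pi(\eta)^t_{i+1}$, which is the technical core. The rule $\lmail \gets \lmail^-$ carries $R$'s $\lmail$-tape at age $Q$ back, diagonally, to the $\lmail$-tape of the region $\eta^{[tU,(t+1)U-1]}_{[(i-1)Q,iQ-1]}$ at age $0$. If that left region is a live colony at row $tU$, then $R \in C$ forces its leftmost column to be live and consistent with $R$ throughout the retrieval phase, and since the main loop clears $\lbor$ and $\rbor$ at every retrieval age, procedure \textsc{Valid} (Algorithm~\ref{alg:Validity}) kills any live non-border cell whose left neighbour is regarded as blank by the $\hat\CA$-nondeterminism; pushing this leftward along the diagonal shows the left region is a genuine colony and no diagonal cell is blanked, so the chain transports the $\lmail$-bits without corruption, and the age-$(U-1)$ assignment of the previous work period equates that colony's $\lmail$-tape at age $0$ with its $\simu$-tape, giving $l = \pi(\eta)^t_{i-1}$. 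The complementary case, in which that colony represents $B_\varalp$ — so it was doomed, destroyed at the previous work-period boundary, and only partly rebuilt during the retrieval by the rules of Algorithm~\ref{alg:Birth} — is exactly the case in which the diagonal runs into blank cells and $l$ comes out as $0^{\size{\varalp}} = B_\varalp$, again matching $\pi(\eta)^t_{i-1}$. Combining this with the displayed identities finishes the proof.

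I expect the retrieval-phase argument to be the main obstacle: it needs a careful case analysis of \textsc{Valid}, of the border/blocking/creative flags during the first $Q$ steps of a work period, and of the destruction and recreation of colonies representing $B_\varalp$, and this is where Observations~\ref{obs:Creative} and~\ref{obs:Inconsistent} and the birth rules come back into play. By contrast, the reduction in the first paragraph is purely formal, and the ``computation phase is faithful'' step is essentially immediate from Lemma~\ref{lem:MacroCells} and the construction; no further ingredient — in particular not Corollary~\ref{cor:NoOverlap} — seems to be needed here.
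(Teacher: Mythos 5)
Your skeleton is the same as the paper's: use case (1) of Lemma~\ref{lem:MacroCells} to produce the macro-cell $R$ directly below $P$, reduce the lemma to the correctness of the retrieved inputs (the paper's \eqref{eq:NiceInputs}), dispose of the $\simu$-tape trivially and of $\rmail$ by symmetry, and argue about the diagonal transport of $\lmail$ during the retrieval phase. But the step you defer is the actual content of the lemma, and the dichotomy you propose for it is flawed in two places. First, your claim that ``$R \in C$ forces its leftmost column to be live and consistent with $R$ throughout the retrieval phase'' is false in general: $\pi(R)$ may be $B_\varalp$ even though $\pi(P) \neq B_\varalp$, and then $R$ can itself be a macro-cell created from the right (case (3) of Lemma~\ref{lem:MacroCells}), in which case its left columns are blank during the early retrieval phase and the diagonal necessarily picks up zeros. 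This is harmless only if one also proves that this situation is incompatible with $\pi(T) \neq B_\varalp$, where $T$ is the $Q \times U$ rectangle immediately to the left of $R$; the paper's Case 1 does exactly this: if $\pi(T) \neq B_\varalp$, then $T$ falls under case (1) of Lemma~\ref{lem:MacroCells}, its age-$Q$ right border cell is not co-creative, and then $R^Q_0$ cannot be left co-creative either (otherwise \textsc{Valid} fails for that border cell on line~\algref{alg:Validity}{ln:BorderInvalid} and the cell above it is blank, contradicting Lemma~\ref{lem:MacroCells}), so $R$ is not created from the right. Your proposal contains no substitute for this interlocking argument.

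Second, in the blank case you assume the left region is a doomed colony that was destroyed at the previous work-period boundary and is being rebuilt. In a two-sided trajectory of $\hat \CA$ there is no such guarantee: $\pi(T) = B_\varalp$ also covers the case where $T$ is arbitrary live debris that is not a macro-cell at all, and a priori such debris could feed nonzero $\lmail$ bits into $R$'s retrieval diagonal, which would break $\pi(\hat\tra_\CA) \subset \tilde\tra_\varCA$. Ruling this out is the real work. The paper splits on whether $R$ is created from the right (its Case 2, where the creative diagonal inside $R$ itself forces $\lmail = 0$) or not (its Case 3, where one shows, using that $R$'s left border carries no left blocking cells together with repeated applications of Observation~\ref{obs:Inconsistent} with $\delta = 1$, that $T$ must in fact lie in $C$, cannot satisfy case (1) of Lemma~\ref{lem:MacroCells} since $\pi(T) = B_\varalp$, hence is created from the right, so the cells of $T$ adjacent to $R$ during the retrieval phase are co-creative and carry $\lmail = 0$). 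You name the right ingredients (\textsc{Valid}, the border, blocking and creative flags, Algorithm~\ref{alg:Birth}, Observations~\ref{obs:Creative} and~\ref{obs:Inconsistent}), but you do not carry out this case analysis, and as it stands the assertion that ``the diagonal runs into blank cells and $l$ comes out as $B_\varalp$'' is unsubstantiated; it is precisely what has to be proved.
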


\begin{proof}
Let $\eta \in \hat \tra_\CA$ be a trajectory such that $P = \eta^{[U,2U-1]}_{[0,Q-1]} \in C$.
It is enough to prove that $\intp_p(\pi(\eta)^0_{-1},\pi(\eta)^0_0,\pi(\eta)^0_1) = \pi(P)$ holds if the simulated state of $P$ is not blank, or $\pi(P) \neq B_\varalp$.
By the first case of Lemma~\ref{lem:MacroCells}, we have that $R = \eta^{[0,U-1]}_{[0,Q-1]} \in C$ and that the agent of $R$ correctly computes $\pi(P)$ from the inputs it receives.
Thus it suffices to prove that the inputs are correct, that is,
\begin{equation}
\label{eq:NiceInputs}
\lmail(R^Q_i) = (\pi(\eta)^0_{-1})_i, \  \simu(R^Q_i) = (\pi(\eta)^0_0)_i, \  \rmail(R^Q_i) = (\pi(\eta)^0_1)_i
\end{equation}
for all $i \in [0, Q-1]$.
Since $\pi(\eta)^0_0 = \pi(R)$, the second condition holds for all $i$, and since the remaining cases are symmetric, it suffices to prove the first one.

Denote by $T = \eta^{[0, U-1]}_{[-Q,-1]}$ the $Q \times U$ rectangular pattern to the left of $R$.
We split the proof into three cases depending on the simulated state of $T$ and the structure of $R$.
See Figure~\ref{fig:UnivRigidity} for illustrations of the cases, where the dotted lines represent the cells $R^Q_i$ for $i \in [0, Q-1]$.

\paragraph{Case 1: $\pi(T) \neq B_\varalp$}
In this case, we necessarily have $T \in C$, and the first case of Lemma~\ref{lem:MacroCells} holds for $T$.
In particular, the bottom row of $T$ contains the correct simulated state on its $\simu$-bits and thus also on its $\lmail$ fields, or $\lmail(T^0_i) = (\pi(\eta)^0_{-1})_i$ for all $i \in [0, Q-1]$.
Also, the cell $c = T^Q_{Q-1}$ on the right border of $T$ (the small square in Figure~\ref{fig:UnivRigidity}) is not co-creative.
This implies that $R^Q_0$, the right neighbor of $c$, is not co-creative either, as otherwise the procedure \textsc{Valid} would return $\mtt{False}$ for $c$ (on line \algref{alg:Validity}{ln:BorderInvalid}), and the cell $T^{Q+1}_{Q-1}$ above $c$ would be blank, contradicting Lemma~\ref{lem:MacroCells}.
Thus $R$ is not created from the right, and Lemma~\ref{lem:MacroCells} implies that the horizontal segment $\eta^t_{[t-Q, t-1]}$ (between the dashed lines in Figure~\ref{fig:UnivRigidity}) consists of cells which are consistent with $T$ and $R$ for all $t \in [0, Q]$.
By induction on $t$, we can prove that $\lmail(T^t_{Q - t + i}) = (\pi(\eta)^0_{-1})_i$ for all $i \in [0, Q-1]$ and $t \in [0, Q]$.
Namely, each cell $\eta^t_i$ simply copies its $\lmail$ field from $\eta^{t-1}_{i-1}$.
We have $\lmail(R^Q_i) = (\pi(\eta)^0_{-1})_i$ in the final case $t = Q$, which is exactly the first claim of \eqref{eq:NiceInputs}.

\paragraph{Case 2: $\pi(T) = B_\varalp$ and $R$ is created from the right}
In other words, $R$ satisfies case 3 of Lemma~\ref{lem:MacroCells}.
This implies that for all $t \in [1, Q]$, the cell $R^t_{Q-t}$ is left creative (represented by the solid diagonal line in Figure~\ref{fig:UnivRigidity}) and $R^{t-1}_{Q-t}$ is blank.
We prove by induction on $t+i$ that for all $t \in [1, Q]$ and $i \in [Q-t, Q-1]$, we have $\lmail(R^t_i) = 0$. 
Namely, this holds for the left creative cells and their right neighbors (the base cases $i = Q-t$ and $i = Q-t+1$), which copy their $\lmail$ fields from blank cells (or cells that the local rule of $\hat \CA$ regards as blank).
The inductive step works as in Case 1.
When $t = Q$, we again have the first claim of \eqref{eq:NiceInputs}.

\paragraph{Case 3: $\pi(T) = B_\varalp$ and $R$ is not created from the right}
In this case, the left border cell $R^Q_0$ is not left blocking.
By induction, none of the cells $R^t_0$ for $t \in [Q, U-1]$ is left blocking.
We prove by induction on $i$ that all of the cells $T^{U-Q-t}_i$ for $t \in [1, Q]$ and $i \in [Q-t, Q-1]$ (the upper dashed triangle in Figure~\ref{fig:UnivRigidity}) are consistent with $T$.
Consider first a cell $T^{U-Q-t}_{Q-1} = \eta^{U-Q-t}_{-1}$ on the right border of $T$, which is the left neighbor of $R^{U-Q-t}_0$ on the left border of $R$.
These cells are represented by the small squares in Figure~\ref{fig:UnivRigidity}.
Since the cell $R^{U-Q-t+1}_0$ is consistent with $T$ but not left blocking, Observation~\ref{obs:Inconsistent} applied to $\delta = 1$ implies that $T^{U-Q-t}_{Q-1}$ is also consistent with $T$.
The inductive step is a similar application of Observation~\ref{obs:Inconsistent}.
In the final case $t = Q$, we see that every cell in the row $T^{U-2Q}$ (the horizontal dashed line) is consistent with $T$, so that $T \in C$ by the definition of $C$.
Now, $T$ cannot satisfy the first case of Lemma~\ref{lem:MacroCells} since $\pi(T) = B_\varalp$, and it cannot be created from the left for the same reasons as $R$ could not be created from the right in Case 1.
This in turn implies that the cell $T^t_{Q-t} = \eta^t_{-t}$ is left co-creative for all $t \in [1, \frac12Q]$ (the solid diagonal line in the figure).
Then we can prove similarly to Case 2 that $\lmail(\eta^t_i) = 0$ for all $t \in [1, Q]$ and $i \in [\max(-t,-Q+t), t-1]$ (the area between the lower dashed lines), which again implies the first claim of \eqref{eq:NiceInputs} when $t = Q$.
\end{proof}


\begin{figure}
\begin{center}
\begin{tikzpicture}[scale=.5]

  \node [left] () at (0,0) {$0$};
  \node [left] () at (0,3) {$Q$};
  \node [left] () at (0,6) {$U-2Q$};
  \node [left] () at (0,10) {$U$};

  \fill[black!20] (0,0) rectangle (3,6);
  \fill[black!20] (3,0) -- (3,10) -- (6,10) -- (6,3) -- cycle;

  \draw (2.8,2.9) rectangle (3,3.1);
  \node (c) at (1.5,3.5) {$c$};
  \draw[->] (c) -- (2.7,3.1);

  \draw[dashed] (0,0) -- (3,3);
  \draw[dashed] (3,0) -- (6,3);
  \draw[thick,dashed,->] (2,.5) -- (4,2.5) node [midway,above,sloped] () {$\lmail$};

  \draw (0,0) rectangle (3,10);
  \draw (3,0) rectangle (6,10);
  \draw[dotted] (3,3) -- (6,3);
  \node [above] () at (3,10) {Case 1};
  \node () at (1.5,5) {$T$};
  \node () at (4.5,5) {$R$};

\begin{scope}[xshift=7cm]

  \fill[black!20] (3,3) -- (3,10) -- (6,10) -- (6,0) -- cycle;
  \draw (3,3) -- (6,0);
  \draw[thick,dashed,->] (4.5,1.5) -- (5.5,2.5) node [midway,above,sloped] () {$0$};

  \draw (0,0) rectangle (3,10);
  \draw (3,0) rectangle (6,10);
  \draw[dotted] (3,3) -- (6,3);
  \node [above] () at (3,10) {Case 2};
  \node () at (1.5,5) {$T$};
  \node () at (4.5,5) {$R$};

\end{scope}

\begin{scope}[xshift=14cm]

  \fill[black!20] (0,3) -- (0,6) -- (3,9) -- (3,0) -- cycle;
  \fill[black!20] (3,0) -- (3,10) -- (6,10) -- (6,3) -- cycle;

  \draw[dashed] (3,9) -- (0,6) -- (3,6);
  \node (d) at (1.5,9) {$T^{U-Q-t}_{Q-1}$};
  \node (e) at (4.5,9) {$R^{U-Q-t}_0$};
  \draw (2.8,7.4) rectangle (3.2,7.6);
  \draw[->] (d) |- (2.7,7.5);
  \draw[->] (e) |- (3.3,7.5);

  \draw (3,0) -- (1.5,1.5);
  \draw[dashed] (1.5,1.5) -- (3,3);
  \draw[dashed] (3,0) -- (6,3);
  \draw[thick,dashed,->] (2.25,.75) -- (4,2.5) node [midway,above,sloped] () {$0$};

  \draw (0,0) rectangle (3,10);
  \draw (3,0) rectangle (6,10);
  \draw[dotted] (3,3) -- (6,3);
  \node [above] () at (3,10) {Case 3};
  \node () at (1.5,5) {$T$};
  \node () at (4.5,5) {$R$};

\end{scope}

\end{tikzpicture}
\end{center}
\caption{An illustration of the proof of Lemma~\ref{lem:UnivRigidity}, not drawn to scale. The gray areas represent cells that we have proved to be live.}
\label{fig:UnivRigidity}
\end{figure}

\begin{lemma}
\label{lem:UnivConnect}
The simulation of $\varCA = \intp^\infty_p$ by $\CA = \univ^{Q,U}_p$ is strongly connecting.
\end{lemma}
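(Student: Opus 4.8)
The plan is to follow the same geometric strategy as in the proof of rigidity (Lemma~\ref{lem:UnivRigidity}), but now tracking \emph{connections} in the graph $G$ rather than the contents of the $\lmail$ fields. Fix $\eta \in \hat\tra_\CA$ and suppose $\pi(\eta)^t_i$ and $\pi(\eta)^{t-1}_{i+\delta}$ are adjacent for some $\delta \in \{-1,0,1\}$; I must show the bases $\eta^{tU+b}_{iQ+a}$ and $\eta^{(t-1)U+b}_{(i+\delta)Q+a}$ (with $(a,b) = (0,U-2Q)$) are directly connected in $G$. Both macro-cells $P = \eta^{[tU,(t+1)U-1]}_{[iQ,(i+1)Q-1]}$ and $P' = \eta^{[(t-1)U,tU-1]}_{[(i+\delta)Q,(i+\delta)Q+Q-1]}$ are in $C$, and since $\pi(\eta)^t_i$ is non-blank (it has an incoming edge so it is a vertex of $G$), the first case of Lemma~\ref{lem:MacroCells} applies to $P$: the full $Q\times U$ rectangle $R$ below the base row is in $C$, every row $R^u$ for $u\in[0,U-1]$ is a colony with age $u$ consistent with $R$, and $R$ contains no co-creative cells. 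In particular the base of $P$ is connected to every cell in $R$ along the obvious vertical/diagonal paths of consistent cells (one checks that cells with nonzero age, or with $\age = 0$ and $\addr \neq 0$, are $\delta$-demanding for the appropriate $\delta$ dictated by the address recursion, so the edges lie in $G$, not just $G'$). Thus it suffices to connect the base of $P'$ to \emph{some} cell of $R$.

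The heart of the argument is to trace the mail-retrieval information flow and observe that it is carried by a path of demanding cells. Since $\pi(\eta)^{t-1}_{i+\delta}$ is non-blank, $\pi(P') \neq B_\varalp$, so Lemma~\ref{lem:MacroCells} case~1 applies to $P'$ as well: its base row and the rectangle below it form colonies consistent with $P'$. Now the retrieval phase of $R$ copies the $\lmail$ (if $\delta=-1$) or $\rmail$ (if $\delta=1$) or $\simu$ (if $\delta=0$) data of the neighboring colony into $R$ over the first $Q$ steps; the cells performing this copying — namely $R^u_{u-Q+k}$ style diagonal segments as in Case~1 of Lemma~\ref{lem:UnivRigidity} — are all consistent with $R$ hence demanding, and they trace back through the right (or left) border of the rectangle to the left of $R$, which, by the same consistency arguments used in Case~1 of Lemma~\ref{lem:UnivRigidity}, is exactly $P'$ (or a rectangle sharing $P'$'s base column, when $\delta=0$). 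Following this chain of consistent demanding cells from the base of $P$ down through $R$, across the retrieval diagonal, into $P'$, and down to the base of $P'$, yields a directed path in $G$, establishing direct connectivity. The only subtlety is the direction of the edges: I will verify that in each of the two possible orientations (the adjacency $\pi(\eta)^t_i \to \pi(\eta)^{t-1}_{i+\delta}$ means $t$ is \emph{above} $t-1$, so I am connecting a later base to an earlier one, and the path runs downward/backward in time), the cells traversed are $\delta$-demanding for the $\delta$ matching the edge, which follows from the address-recursion and age-recursion enforced by \textsc{Valid} together with the fact that a creative/border-free colony cell with the relevant neighbor structure forces its predecessor to be live.

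I expect the main obstacle to be bookkeeping around the creative cells and the possibility that $P'$ or $R$ was \emph{created} (cases 2--4 of Lemma~\ref{lem:MacroCells}). When $\pi(P') \neq B_\varalp$ this cannot happen for $P'$, which is the good case; but the retrieval diagonal in $R$ might originate from a segment that was freshly created from a doomed neighbor, and I will need Observation~\ref{obs:Creative} and Observation~\ref{obs:Inconsistent} — exactly as in Cases~2 and~3 of the rigidity proof — to show that the co-creative/creative cells along the relevant border are themselves demanding and form an unbroken path back to the base of $P'$ (the creative cell at age $u$ has a consistent live predecessor at age $u-1$, so the chain does not break). The case $\delta = 0$ is slightly degenerate: the two macro-cells share a base column, and the connection is essentially vertical through the shared column of consistent cells, which is immediate. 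The final write-up will dispatch $\delta=0$ quickly, treat $\delta=-1$ in detail by adapting Cases~1--3 of Lemma~\ref{lem:UnivRigidity} verbatim with ``carries the $\lmail$ bit'' replaced by ``lies on a directed $G$-path'', and note that $\delta=1$ is symmetric.
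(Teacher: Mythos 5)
Your plan has a genuine gap, and it is geometric rather than book-keeping. A directed path in $G$ is monotone in time (every edge goes from a cell at time $t-1$ to one at time $t$), so any directed path between the two bases must stay inside the horizontal strip between them, i.e.\ (normalizing $P=\eta^{[U,2U-1]}_{[0,Q-1]}$, $R=\eta^{[0,U-1]}_{[0,Q-1]}$, $P'=T=\eta^{[0,U-1]}_{[-Q,-1]}$) inside rows $[U-2Q,\,2U-2Q]$. The mechanism you put at the heart of the proof --- the retrieval-phase diagonal of $R$ that carries the $\lmail$ bits, as in Case~1 of Lemma~\ref{lem:UnivRigidity} --- lives in rows $[0,Q]$, far below that strip (recall $U\geq 6Q+\dots$), so it can never lie on a directed path between the bases. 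The phrase ``into $P'$, and down to the base of $P'$'' reveals the underlying misreading: the base is the $2Q$'th row from the top, so it sits near the \emph{top} of $P'$, not near its bottom; reaching it from the retrieval rows would require travelling forward in time after having travelled backward, which is not a directed path. A secondary issue is the step ``consistent with $R$ hence demanding'': this is not automatic --- e.g.\ a cell with age in $[1,Q]$ and cleared border flags can be produced by a creative predecessor with a blank neighbour, so it need not be $(-1)$-demanding --- which is exactly why the choice of which cells to route through matters.

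The paper's proof stays entirely inside the admissible strip. For $\delta=0$ it uses the left-border column $V=\eta^{[U-2Q,2U-2Q]}_0$: by case~1 of Lemma~\ref{lem:MacroCells}, $P$ contains no co-creative cells, so these cells are $0$-demanding and give a vertical directed path from the base of $R$ to the base of $P$. For $\delta=-1$ it first invokes strong rigidity (Lemma~\ref{lem:UnivRigidity}) to conclude that the lower-left macro-cell $T$ is in $C$ with $\pi(T)\neq B_\varalp$; it then shows, as in Case~3 of the rigidity proof (the ``upper triangle'' induction via Observation~\ref{obs:Inconsistent}, not the Case~1 mail-flow argument), that the left border of $R$ contains no left blocking cells and that the diagonal cells $T^{U-Q-t}_{Q-t}$, $t\in[1,Q]$, are consistent with $T$ and $(-1)$-demanding. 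That diagonal runs from the base of $T$ up-right to the left-border column of $R$ and $P$, and concatenating it with $V$ gives the required monotone directed path from the base of $T$ to the base of $P$; the case $\delta=1$ is simpler. So while your instinct to reuse the rigidity proof was right, you reused the wrong case of it: what is needed is the consistency/demandingness of cells high up in the neighbouring macro-cell, not the information flow of the retrieval phase at the bottom.
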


\begin{proof}
Let $\eta \in \hat \tra_\CA$ be such that $P = \eta^{[U,2U-1]}_{[0,Q-1]} \in C$ and $\pi(P)$ is $\delta$-demanding for some $\delta \in \{-1,0,1\}$.
In particular, we have $\pi(P) \neq B_\alp$, and Lemma~\ref{lem:MacroCells} then implies that $R = \eta^{[0,U-1]}_{[0,Q-1]} \in C$.
Then every cell of the rectangle $\eta^{[Q,2U-2Q]}_{[0,Q-1]}$ is consistent with $P$.
Also, the cells of the vertical column $V = \eta^{[U-2Q,2U-2Q]}_0$ on the left border of $P$ and $R$ are all $0$-demanding, so we have a directed path from the base of $P$ to that of $R$, which proves the claim when $\delta = 0$.

Suppose then that $\delta = -1$.
Since $\CA$ is strongly rigid by Lemma~\ref{lem:UnivRigidity}, we have $T = \eta^{[0,U-1]}_{[0,Q-1]} \in C$ and $\pi(T) \neq B_\alp$.
In particular, the base cell $T^{U-2Q}_0$ is consistent with $T$.
As in the proof of Lemma~\ref{lem:UnivRigidity}, we can also prove that the left border of $R$ does not contain left blocking cells.
Using Observation~\ref{obs:Inconsistent}, we can again prove by induction on $t$ that every cell $T^{U-Q-t}_{Q-t}$ for $t \in [1, Q]$ is consistent with $T$.
Furthermore, the cells are $(-1)$-demanding, and the cell $T^{U-2Q}_0$ of the case $t = Q$ is the base of $T$.
Combined with the aforementioned column $V$, we have obtained a directed path from the base of $P$ to that of $T$, which proves the claim when $\delta = -1$.
The case $\delta = 1$ is even simpler.
\end{proof}

Finally, we prove the analogue of Lemma~\ref{lem:SparseParents} from Section~\ref{sec:SparseProp} for the universal simulation.

\begin{lemma}
\label{lem:UnivParents}
Let $\eta \in \hat \tra_\CA$, and let $c = \eta^t_i$ be a live cell for some $(i,t) \in \Z^2$.
Then for some $s \in [t-2U,t-U+1]$ and $j \in [i-3Q+1,i+2Q-1]$, the rectangle $P = \eta^{[s,s+U-1]}_{[j,j+Q-1]}$ is a macro-cell in a non-blank state, and $c$ is directly connected to the base of $P$, and thus consistent with it.
\end{lemma}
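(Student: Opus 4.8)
The plan is to trace the ancestry of $c$ backwards in time along directed paths in the graph $G$, and to show that such a path passes through the base of a macro-cell in a non-blank state after a bounded descent and with bounded horizontal drift; this is the analogue for the universal simulator of Lemma~\ref{lem:SparseParents}, but the demanding-cell structure of $\CA$ is much richer, so the descent must be followed through colonies, creation waves, and up to two work periods, and the non-blankness of the target macro-cell needs a separate argument. The first step is the local fact underlying everything: in any $\eta\in\hat\tra_\CA$, every live cell $d=\eta^r_k$ is $\delta$-demanding for some $\delta\in\{-1,0,1\}$, so there is an incoming $G$-edge from the live cell $\eta^{r-1}_{k+\delta}$. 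This is read off the program of $\CA$ (Algorithm~\ref{alg:MainBody}) together with \textsc{Valid} and \textsc{Birth}: a live non-border cell is killed by \textsc{Die} unless \textsc{Valid} returns $\mtt{True}$, which forces $\live^-=\live=\live^+=\mtt{True}$, so such a cell is in fact demanding in all three directions; a blocking or co-creative cell is demanding towards the neighbor that \textsc{Valid} or \textsc{Birth} forces to be live, which is exactly the content of Observations~\ref{obs:Creative} and~\ref{obs:Inconsistent}; and a cell produced from a blank cell by \textsc{Birth} acquires a border flag and is co-creative. Hence, starting from $c$ and repeatedly passing to a $G$-parent yields an infinite directed path $c=c_0,c_1,c_2,\dots$ with $c_m=\eta^{t-m}_{i_m}$, $|i_m-i_{m-1}|\le 1$, all $c_m$ live (here it is essential that $\eta$ is two-directional).

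Next I would analyse this path using Lemma~\ref{lem:MacroCells} and the two Observations. Inside a single colony the path may be kept vertical (a non-border live cell is demanding in all directions), so it descends through the colony $c$ belongs to until it either drops from age $0$ into the colony occupying the same cells one work period below, or enters a creation wave — which by Observation~\ref{obs:Creative} it then follows down a diagonal of length at most $Q$ and drift at most $Q$ until it meets the colony that produced the wave. Iterating, after descending at most one work period plus one creation wave the path reaches the age-$(U-2Q)$ row, i.e.\ the base row, of a macro-cell $P=\eta^{[s,s+U-1]}_{[j,j+Q-1]}$ to which $c$ is directly connected and with which it is consistent; taking $P$ to be (the relevant horizontal translate of) the macro-cell one work period below $c$'s own colony, an index computation using $U\ge 6Q$ puts $s$ in $[t-2U,t-U+1]$ and $j$ in $[i-3Q+1,i+2Q-1]$, the asymmetry of the horizontal window accounting for a colony created from its right, which extends one colony-width to the left of our left-anchored base coordinate.

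The crucial remaining point is that $P$ can be chosen with $\pi(P)\neq B_\varalp$. Here I would use the observation that a colony simulating $B_\varalp$ cannot persist across a work period: at the end of its work period it recomputes its state as $B_\varalp$, so it is marked doomed and destroyed; hence it did not exist in the previous work period, so every $B_\varalp$-simulating colony is created during its retrieval phase, and the colony that sources the relevant creation wave survived the previous work period and, not being itself doomed, simulates a non-blank state. Thus if the macro-cell $P$ obtained above already has $\pi(P)\neq B_\varalp$ we are done; otherwise $P$ was created, and continuing the path from $P$'s base across the creation wave leads to the source colony $S$, a non-blank macro-cell occupying the same work period as $P$ and at most one colony-width away, whose base is again directly connected to $c$ and still lies in the stated window. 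Since $S$ is non-blank it was not itself created this period (newly created cells carry $\simu=0$), so no further descent is needed: the total descent is at most two work periods, which is what forces $s\ge t-2U$.

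The step I expect to be the main obstacle is the index bookkeeping in the last two paragraphs: the macro-cell on which the ancestry lands is shifted from $c$ both vertically — by up to a work period, depending on $c$'s age and on whether $c$'s colony persisted or was created — and horizontally, by the accumulated drift of one or two creation waves, and one must check in each of the handful of configurations (is $c$ in the bulk or the creation region of its colony? did that colony persist or was it created? is the first candidate macro-cell blank?) that the resulting pair $(s,j)$ lies in $[t-2U,t-U+1]\times[i-3Q+1,i+2Q-1]$, using $U\ge 6Q$ freely. The conceptual content, by contrast, reduces to the two structural facts isolated above: every live cell is demanding, and no colony simulating $B_\varalp$ survives a work period.
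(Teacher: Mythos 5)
Your overall strategy is the same as the paper's: reduce to the top row of a candidate macro-cell by stepping backwards along direct connections (decrementing the age), show that a macro-cell $R$ one work period below is reached, and, if $\pi(R)=B_\varalp$, follow the creation wave to the adjacent source colony in the same work period and take that as $P$; the index bookkeeping is then exactly the paper's $s=t-U+1$ shifted by at most the age of $c$, and $j=i-\addr(c)+\delta Q$. However, there are two places where your argument, as written, does not go through in $\hat\tra_\CA$. First, a single backward directed path does not by itself show that "the colony occupying the same cells one work period below" exists: to know that $R$ is a macro-cell you must establish that an entire row of $Q$ cells consistent with $c$ is present at age $U-2Q$, and this needs the widening induction the paper uses (the number of directly connected cells on the $r$'th row below $c$ is at least $\min(Q,r+1)$, and, in the created case, a full sliding row of length $Q$ along the wave). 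A trajectory of $\hat\CA$ can contain a lone vertical stack of consistent cells with nothing beside it only if the demanding structure forbids it, and that is precisely what the row induction verifies.

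Second, and more seriously, your justification of the crucial non-blankness of the source colony $S$ is a forward-dynamical argument about its \emph{previous} work period ("$S$ survived the previous work period and, not being itself doomed, simulates a non-blank state"). Nothing in a two-directional trajectory of $\hat\CA$ guarantees that the rows before $S$'s current work period contain a well-formed, non-doomed colony; the nondeterministic rule may regard arbitrary cells as blank, and the lemma gives you no control over times earlier than the rectangle you are constructing. The paper avoids this entirely: it shows that the adjacent rectangle $T$ is itself a macro-cell of the same work period and that $T$ cannot have been created from either direction (its border facing $R$ is occupied by the consistent cells feeding the wave, and creation from the far side is excluded as in the proof of Lemma~\ref{lem:MacroCells}), whence $\pi(T)\neq B_\varalp$ follows from the case analysis of Lemma~\ref{lem:MacroCells} (not created implies case 1). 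Your later remark that a non-blank colony was not created uses the implication in the wrong direction; the step you need is "not created implies non-blank", applied to $S$ after proving structurally that $S$ is an uncreated macro-cell. With that substitution (and the row induction above) your outline becomes the paper's proof.
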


\begin{proof}
Suppose first that $\age(c) = U-1$.
In this case, we claim that the result holds for $s = t - U + 1$ and some $j = j_\delta = i - \addr(c) + \delta Q$ for $\delta \in \{-1,0,1\}$.
Namely, we can prove by induction that for all $r \in [0,2Q-1]$, the number of cells on the row $\eta^{t-r}_{[j_0,j_0+Q-1]}$ that are directly connected to (and thus consistent with) $c$ is at least $\min(Q,r+1)$.
For $r = 2Q - 1$ this implies that the row $\eta^{t-2Q+1}_{[j_0,j_0+Q-1]}$ is a colony with age $U-2Q$, so that $R = \eta^{[t-U+1,t]}_{[j_0,j_0+Q-1]} \in C$.
If $\pi(R) \neq B_\alp$, we can choose $P = R$ and the claim holds.

Otherwise, $R$ is created from some direction, and we assume that it is created from the left, the other case being symmetric.
Since the left border of $R$ does not contain blocking cells, we can prove that for all $r \in [0,Q-1]$,
the row $\eta^{t-Q-r}_{[j_0-r,j_0+Q-1-r]}$ consists of cells that are directly connected to $c$.
In the final case $r = Q-1$, we see that $\eta^{t-2Q+1}_{[j_{-1},j_{-1}+Q-1]}$ is also a colony with age $U-2Q$.
Thus $T = \eta^{[t-U+1,t]}_{[j_{-1},j_{-1}+Q-1]} \in C$, and it is easily seen that $T$ cannot be created from either direction, implying that $\pi(T) \neq B_\alp$.
Then we can choose $P = T$.

Next, suppose that $a = \age(\eta^t_i) \neq U-1$.
If $a \in \{0\} \cup [Q+1,U-2]$, then Observation~\ref{obs:Inconsistent} implies that $\eta^{t-1}_i$ is consistent with $\eta^t_i$, and otherwise, Observation~\ref{obs:Creative} implies that one of $\eta^{t-1}_\delta$ for $\delta \in \{-1,0,1\}$ is.
Since the age of said consistent cell is $(a - 1) \bmod U$, a simple induction finishes the proof.
\end{proof}

\section{Amplification}
\label{sec:Amplification}

The main reason for constructing the universal simulator automaton is the following powerful result, which shows the existence of a wide class of amplifiers.

\begin{theorem}
\label{thm:Amplification}
Let $G$ be a polynomial CA transformation.
Then there exist computable sequences $(Q_n, U_n)_{n \in \N}$ in $\N^2$ and $(p_n)_{n \in \N}$ in $\{0, 1\}^*$ such that each $p_n$ is a program for the universal simulator $\univ^{Q_n, U_n}_{G(n+1, p_{n+1})}$.
\end{theorem}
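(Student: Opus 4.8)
The plan is to break the apparent circularity in the statement by means of the self‑reference keyword \texttt{This}: the programs $p_n$ will not \emph{store} one another, but will only \emph{rebuild} one another by a fixed syntactic edit, so that they are all essentially instances of a single template. Recall that $\univ^{Q,U}_p$ is by definition $\intp^\infty$ of a fixed template program $\mathrm{Univ}$ with $\mtt{CSize}=Q$, $\mtt{WPeriod}=U$ and $\mtt{SimProg}=p$ substituted for its three parameters, and that $\mtt{SimProg}$ is used only on the one line of Algorithm~\ref{alg:MainBody} that loads the program to be simulated into the $\prog$ tracks of each colony. I would first fix a computable sequence $(Q_n,U_n)_{n\in\N}$ with $4\mid Q_n$ (to be pinned down in the last step), and then form a single program template $\mathrm{Core}$ obtained from $\mathrm{Univ}$ by: replacing the parameter $\mtt{SimProg}$ with a numeric parameter $\mtt{Level}$, \emph{stored in unary}; hard‑wiring the sequence $n\mapsto(Q_n,U_n)$; and replacing the $\prog$‑loading line by the instruction
\[
\prog \gets \bigl(G(\mtt{Level}+1,\ \tau(\mtt{This}))\bigr)_{\addr},
\]
where $\tau$ is the fixed computable map that takes a $\mathrm{Core}$‑instance $r$, reads its level $m$, and returns the $\mathrm{Core}$‑instance with parameters $\mtt{Level}=m+1$, $\mtt{CSize}=Q_{m+1}$, $\mtt{WPeriod}=U_{m+1}$. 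Finally set $p_n := \mathrm{Core}[\mtt{Level}{=}n,\,\mtt{CSize}{=}Q_n,\,\mtt{WPeriod}{=}U_n]$ and $q_n := G(n+1,p_{n+1})$.

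Next I would check that $\intp^\infty_{p_n}$ is \emph{literally} the automaton $\univ^{Q_n,U_n}_{q_n}$ — not merely a simulator of it. The fields of $\mathrm{Core}$ and their widths are exactly those of $\mathrm{Univ}$ (the level is a parameter, not a field), and $\mathrm{Core}$ uses the same $Q_n,U_n$, so the two programs have the same state set. For the local rule, the only discrepancy is the $\prog$‑loading line, and there $\tau(p_n)=\mathrm{Core}[\mtt{Level}{=}n{+}1,\mtt{CSize}{=}Q_{n+1},\mtt{WPeriod}{=}U_{n+1}]=p_{n+1}$, so $p_n$ loads exactly $(G(n+1,p_{n+1}))_{\addr}=(q_n)_{\addr}$ into $\prog$, which is what $\mathrm{Univ}[Q_n,U_n,q_n]$ does; every other line is identical, so the two transition functions coincide. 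Two points need care. First, $\mathrm{Core}$ must be a valid program: since the level is stored in unary, $|p_n|=\Theta(n)$, so the call to the polynomial algorithm for $G$ on input $(n+1,\tau(p_n))$ — performed by $\intp$ while evaluating $p_n$'s local rule — runs in time and space polynomial in $|p_n|$, within the complexity $P^T_\intp(|p_n|,\size{p_n})$, $P^S_\intp(|p_n|,\size{p_n})$ guaranteed for $\intp$. Second, $\univ^{Q_n,U_n}_{q_n}$ must actually be defined, i.e.\ $(Q_n,U_n)$ must satisfy the hypotheses \eqref{eq:QBound}, \eqref{eq:UBound} of Lemma~\ref{lem:UnivSimu} for the program $q_n$.

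It remains to choose $(Q_n,U_n)$. Because the level of $p_{n+1}$ is unary we have $|p_{n+1}|=O(n)$, and $\size{p_{n+1}}=O(\log Q_{n+1}+\log U_{n+1})$; hence, $G$ being a polynomial CA transformation, $|q_n|$ and $\size{q_n}$ are polynomial in $n$, $\log Q_{n+1}$ and $\log U_{n+1}$. Consequently the bounds required by Lemma~\ref{lem:UnivSimu},
\[
Q_n \geq P^S_\intp(|q_n|,\size{q_n}), \qquad U_n \geq 6Q_n + P^T_\intp(|q_n|,\size{q_n}),
\]
are — apart from the $6Q_n$ term — majorized by a fixed polynomial in $n$, $\log Q_{n+1}$ and $\log U_{n+1}$; since the dimensions enter the right‑hand sides only logarithmically, any polynomially growing choice that keeps $U_n$ a bit above $6Q_n$ works. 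Concretely, taking $Q_n$ to be the least multiple of $4$ exceeding $(n+c)^{d}$ and $U_n := 7\,Q_n$, for large enough absolute constants $c,d$, makes both inequalities hold for every $n$. This sequence $(Q_n,U_n)$ is computable, hence so is $(p_n)$, and by the previous paragraph each $p_n$ is a program for $\univ^{Q_n,U_n}_{G(n+1,p_{n+1})}$, as desired.

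I expect the genuine work to be in the second step: verifying that replacing ``read the stored $\mtt{SimProg}$'' by ``recompute it from \texttt{This} via $G$ and $\tau$'' leaves the local transition function \emph{unchanged as a function} — so that the resulting object is an honest universal simulator for $q_n$, not just something that behaves like one on well‑formed configurations — while keeping $\intp$'s evaluation of that transition function within its stated polynomial complexity, which is precisely why the level is stored in unary. Everything else — the construction of $\mathrm{Core}$ from $\mathrm{Univ}$, the closure of the size inequality on $(Q_n,U_n)$, and the computability of the two sequences — is routine given the programming language of Section~\ref{sec:Interpreter}, the interpreter $\intp$, and Lemma~\ref{lem:UnivSimu}.
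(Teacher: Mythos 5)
Your proposal is correct and takes essentially the same route as the paper: the paper likewise turns the universal simulator into a self-referential program by adding a $\lev$ parameter and defining $\mtt{SimProg} = G(\lev+1,\textsc{IncLevel}(\mtt{This}))$, takes $Q_n,U_n$ polynomial in $n$, and closes the loop exactly as you do, by estimating $|q_n|$ and $\size{q_n}$ and then choosing the constants in the polynomial dimensions so that the bounds \eqref{eq:QBound} and \eqref{eq:UBound} of Lemma~\ref{lem:UnivSimu} hold. The only deviations are cosmetic: you inline the $G$-computation at the $\prog$-loading line instead of keeping $\mtt{SimProg}$ as a parameter, and you store the level in unary so that evaluating $G$ (polynomial in the value of $n$) stays literally within the interpreter's bound $P^T_\intp(|p|,\size{p})$, whereas the paper keeps the level in binary and absorbs this cost into the choice of $U_n$.
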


In particular, if the transformation $G$ is such that $\intp^\infty_{G(n,p)}$ always simulates $\intp^\infty_p$, then the sequence $(\intp^\infty_{G(n, p_n)})_{n \in \N}$ is an amplifier.
We also note that the sequence $(Q_n, U_n)_{n \in \N}$ of dimensions could be chosen quite freely (here they are some polynomial functions of $n$), but we have no need for such fine-tuning in this article.

\begin{proof}
Let $P_G$ be a polynomial in two variables such that $|G(n, p)| \leq P_G(n, |p|)$ and $\size{G(n, p)} \leq P_G(n, \size{p})$ hold for all valid programs $p \in \{0,1\}^*$ and $n \in \N$.
Since the programming language defined in Section~\ref{sec:Interpreter} may be assumed to contain any particular polynomially computable function, we assume it contains the transformation $G$.

\begin{algorithm}[htp]
\caption{The modified parameters of $p_n$.} \label{alg:AmplifierParams}
\begin{algorithmic}[0]

\State $\mbf{num\    param}\ \lev = n$
\State $\mbf{num\    param}\ \mtt{CSize} = c_1 \cdot (\lev+1)^{c_2} - 1$
\State $\mbf{num\    param}\ \mtt{WPeriod} = c_3 \cdot (\lev+1)^{c_4} - 1$
\State $\mbf{string\ param}\ \mtt{SimProg} = G(\lev + 1, \textsc{IncLevel}(\mtt{This}))$

\end{algorithmic}
\end{algorithm}

We construct the programs $p_n$ by slightly modifying the program of the universal simulator CA.
In fact, it is enough to replace the parameter definitions of Algorithm~\ref{alg:ParamsFields} with Algorithm~\ref{alg:AmplifierParams}, and define one new procedure \textsc{IncLevel}.
The modified program contains the new parameter $\lev$, whose value is simply $n$, and the definitions of the other parameters are changed slightly.
In particular, the simulated program is now calculated from $p_n$ itself, using the keyword $\mtt{This}$ (which refers to $p_n$ as a bitstring) and the procedure \textsc{IncLevel}, which returns $p_{n+1}$ by simply incrementing the $\mtt{Level}$ parameter of its argument program by one.
The numbers $c_1, c_2, c_3, c_4 \in \N$ are some constants that only depend on $G$.
Of course, we define $Q_n = c_1 (n+1)^{c_2}$ and $U_n = c_3 (n+1)^{c_4}$ for all $n \in \N$.

We now claim that for some values of the constants $c_i$, the inequalities~\eqref{eq:QBound} and~\eqref{eq:UBound} hold with $Q$, $U$ and $p$ replaced by $Q_n$, $U_n$ and $G(n+1,p_{n+1})$ for all $n \in \N$.
For that, consider the program $p_{n+1}$.
Since the numbers are encoded in binary, its length is asymptotically $|p_{n+1}| = \log_2 n + \sum_{i=1}^4 \log_2(c_i) + \Oh(1)$, since the fields, procedure definitions and main body of the program are all independent of $n$ and the constants $c_i$.
Also, for the length of the states of $p_{n+1}$ we have $\size{p_{n+1}} = \log_2(c_1 c_3) + (c_2 + c_4) \log_2 n + \Oh(1)$.

Denote $q_n = G(n+1, p_{n+1})$.
We can now estimate the values $|q_n|$ and $\size{q_n}$ of the simulated automaton by
\[ |q_n| = P_G(n, \log_2 n + \sum_{i=1}^4 \log_2(c_i)) + \Oh(1) \]
and
\[ \size{q_n} = P_G(n, \log_2(c_1 c_3) + (c_2 + c_4) \log_2 n) + \Oh(1). \]
This implies that the complexities $P^S_\intp(|q_n|, \size{q_n})$ and $P^T_\intp(|q_n|, \size{q_n})$ are polynomial in $\log_2 n$ and the constants $c_i$.
It is clear that some values for the $c_i$ then exist that guarantee the inequalities $c_1 (n+1)^{c_2} \geq P^S_\intp(|q_n|, \size{q_n})$ and $c_3 (n+1)^{c_4} \geq 6 c_1 (n+1)^{c_2} + P^T_\intp(|q_n|, \size{q_n})$ for all $n \in \N$.
But these are exactly the bounds in~\eqref{eq:QBound} and~\eqref{eq:UBound}, and Lemma~\ref{lem:UnivSimu} finishes the proof.
\end{proof}

Theorem~\ref{thm:Amplification} reveals the motivation behind the definition of polynomial CA transformations.
Namely, a consistent transformation $G$ should be seen as a function that takes a cellular automaton $\CA$, together with an integer parameter $n \in \N$, and endows it with a property $P_n$, so that $P_n(G(n,\CA))$ holds.
Now, the theorem implies that if $G(n,\CA)$ is sufficiently similar to $\CA$, in the sense of being able to simulate it, there exists a single CA (the bottom of the amplifier) that, at least in some sense, satisfies $P_n$ for all $n \in \N$.
We could say that this single CA \emph{amplifies} the effects of $G$.

\section{The Sparse Amplifier}
\label{sec:SparseAmp}

\subsection{Properties of the Sparse Amplifier}

Let $N \subset \N$ be decidable in polynomial time.
Theorem~\ref{thm:Amplification} implies the existence of an amplifier $(G_s^N(n,\CA_n))_{n \in \N}$, where each $\CA_n$ is a universal simulator CA for $G_s^N(n+1,\CA_{n+1})$.
We call this the \emph{sparse amplifier} with effective set $N$, and proceed to study its properties.

We establish some notation for the rest of this section.
For convenience, we denote $\varCA_n = G_s^N(n,\CA_n)$, and let $\alp_n$ and $\varalp_n$ be the alphabets of $\CA_n$ and $\varCA_n$, respectively.
Let also $Q_n, U_n \in \N$ be given by Theorem~\ref{thm:Amplification}, and let $S^u_n = (Q_n,U_n,C^u_n,\pi^u_n)$ be the simulation of $\varCA_{n+1}$ by $\CA_n$.
Denote $M_n = 2n+1$ if $n \in N$, and $M_n = 1$ otherwise, and let $S^s_n = (M_n,M_n,C^s_n,\pi^s_n)$ be the simulation of $\CA_n$ by $\varCA_n$.
Denote $Q'_n = Q_n M_n$ and $U'_n = U_n M_n$, and let $S'_n = (Q'_n,U'_n,C'_n,\pi'_n)$ be the simulation of $\varCA_{n+1}$ by $\varCA_n$, that is, $S'_n = S^u_n \circ S^s_n$.
We also denote $B_n = \prod_{i=0}^{n-1} Q'_n$ and $W_n = \prod_{i=0}^{n-1} U'_n$, and let $S_n = (B_n,W_n,C_n,\pi_n)$ be the composed simulation of $\varCA_n$ by $\varCA_0$.
In particular, $S_0$ is the identity simulation of $\varCA_0$ by itself.
Thus, the $S^s_n$ are the sparse simulations, the $S^u_n$ the universal simulations, the $S'_n$ their compositions, and the $S_n$ the $n$-fold compositions of the $S'_n$.
The bottom of the amplifier, $\varCA_0$, is our main object of interest, and its unique ergodicity will be proved in the next section.
The simulation chain is drawn in Figure~\ref{fig:SimuChain}.


\begin{figure}
\begin{center}
\begin{tikzpicture}[node distance=2cm]

\node (0v) {$\varCA_0$};
\node (0) [below of=0v] {$\CA_0$};
\node (1v) [right of=0v] {$\varCA_1$};
\node (1) [below of=1v] {$\CA_1$};

\node (d) [right of=1v] {$\cdots$};

\node (nv) [right of=d] {$\varCA_{n-1}$};
\node (n) [below of=nv] {$\CA_{n-1}$};
\node (n1v) [right of=nv] {$\varCA_n$};
\node (n1) [below of=n1v] {$\CA_n$};

\draw[->] (0v) to [loop left] node {$S_0$} (0v);

\draw[->] (0v) to node [left] {$S^s_0$} (0);
\draw[->] (0) [bend right] to node [left,pos=.6] {$S^u_0$} (1v);
\draw[->] (0v) to node [below] {$S'_0 = S_1$} (1v);
\draw[->] (1v) to node [right] {$S^s_1$} (1);

\draw[->] (nv) to node [left] {$S^s_{n-1}$} (n);
\draw[->] (n) [bend right] to node [left,pos=.6] {$S^u_{n-1}$} (n1v);
\draw[->] (nv) to node [below] {$S'_{n-1}$} (n1v);
\draw[->] (n1v) to node [right] {$S^s_n$} (n1);

\draw[->] (0v) to [bend left] node [below] {$S_{n-1}$} (nv);
\draw[->] (0v) to [bend left] node [above] {$S_n$} (n1v);

\end{tikzpicture}
\end{center}
\caption{The chain of simulations induced by the sparse amplifier.}
\label{fig:SimuChain}
\end{figure}

We now proceed to prove the main properties of the sparse amplifier.
They mostly follow from the results of Section~\ref{sec:SparseProp} and Section~\ref{sec:UnivProp}.
In their proofs, we always assume that $N = \N$ unless otherwise noted, since the case $n \notin N$ is usually simpler than its converse and can be omitted.

\begin{lemma}
\label{lem:ChainRigid}
Every simulation $S'_n$ and $S_n$ is rigid and connecting.
\end{lemma}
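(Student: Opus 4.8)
The plan is to derive both statements purely from the composition lemma for rigid and connecting simulations (Lemma~\ref{lem:CompProps}), using the building blocks already in place: the sparse simulation $S^s_n$ is weakly rigid and connecting (Lemmas~\ref{lem:SparseRigidity} and~\ref{lem:SparseConnect}), while the universal simulation $S^u_n$ is strongly rigid and strongly connecting (Lemmas~\ref{lem:UnivRigidity} and~\ref{lem:UnivConnect}). No new geometric or combinatorial work on the automata themselves should be needed.

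First I would treat $S'_n$. Since $S'_n = S^u_n \circ S^s_n$ with $S^s_n$ weakly rigid and connecting and $S^u_n$ strongly rigid and strongly connecting, the second clause of Lemma~\ref{lem:CompProps} applies verbatim and yields that $S'_n$ is rigid and connecting. The only thing to note separately is the degenerate case $n \notin N$: there $\varCA_n = \CA_n$, $M_n = 1$, and $S^s_n$ is the identity simulation, so $S'_n = S^u_n$, which is strongly rigid and strongly connecting and hence in particular rigid and connecting (because $\tilde\tra_{\CA_n} \subset \hat\tra_{\CA_n}$). As the remark preceding these lemmas indicates, this case can be dismissed in a single line.

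Next, for $S_n$ I would argue by induction on $n$. The base case $n = 0$ is the identity simulation of $\varCA_0$ by itself, whose state function $\pi_0$ is the identity map, and which is therefore trivially rigid and connecting. For the inductive step, $S_n$ is the $n$-fold composition of the $S'_i$, so by associativity of composition of simulations $S_n = S'_{n-1} \circ S_{n-1}$. The induction hypothesis gives that $S_{n-1}$ is rigid and connecting, and the previous paragraph gives the same for $S'_{n-1}$, so the first clause of Lemma~\ref{lem:CompProps} immediately yields that $S_n$ is rigid and connecting.

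I do not expect a genuine obstacle here; the lemma is essentially a bookkeeping corollary of Lemma~\ref{lem:CompProps}. The two points that require a little attention are: (i) invoking the correct clause of Lemma~\ref{lem:CompProps}—it is precisely the asymmetric statement ``weakly rigid and connecting composed with strongly rigid and strongly connecting yields rigid and connecting'' that is used for $S'_n$, which is exactly why the universal simulation was engineered to be \emph{strongly} rigid and strongly connecting rather than merely rigid and connecting; and (ii) getting the composition order right, so that one composes $S_n = S'_{n-1} \circ S_{n-1}$ in the orientation that mirrors composition of the state functions $\pi$, consistently with the convention fixed after Lemma~\ref{lem:Composition}.
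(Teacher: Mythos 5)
Your proposal is correct and follows the paper's own argument: the paper proves this lemma exactly by induction, combining the weak rigidity and connectivity of the sparse simulations (Lemmas~\ref{lem:SparseRigidity}, \ref{lem:SparseConnect}), the strong rigidity and strong connectivity of the universal simulations (Lemmas~\ref{lem:UnivRigidity}, \ref{lem:UnivConnect}), and the composition lemma (Lemma~\ref{lem:CompProps}). Your write-up merely makes explicit the bookkeeping (which clause of Lemma~\ref{lem:CompProps} is used where, the base case $S_0$, and the case $n \notin N$) that the paper leaves implicit.
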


\begin{proof}
This follows by induction from the rigidity (Lemma~\ref{lem:SparseRigidity} and Lemma~\ref{lem:UnivRigidity}) and connectivity (Lemma~\ref{lem:SparseConnect} and Lemma~\ref{lem:UnivConnect}) of the sparse and universal simulations, and Lemma~\ref{lem:CompProps}.
\end{proof}

The following result is analogous to Corollary~\ref{cor:NoOverlap} from Section~\ref{sec:UnivProp}.
It states that two distinct macro-cells of the simulation $S_n$ cannot overlap by more than $\frac13$ of their width and height, even in a nondeterministic trajectory of $\tilde \varCA_0$.
The bound $\frac13$ is not optimal, but suffices for our needs.
Since this proof quickly becomes quite technical, we split it into several smaller claims.
The reader should consult Figure~\ref{fig:OverlapLemma} for a visualization of the proof.

\begin{figure}
\begin{center}
\begin{tikzpicture}

\draw[fill=gray,thick] (.3,.4) rectangle ++(1.2,4);
\draw[fill=gray!50,thick] (0,0) rectangle ++(1.2,4);
\begin{scope}
\clip (0,0) rectangle ++(1.2,4);
\draw[thick,dashed] (.3,.4) rectangle ++(1.2,4);
\end{scope}

\node () [left] at (0,2) {$P$};
\node () [right] at (1.5,2.2) {$R$};

\draw[thick,densely dotted] (.9,3) -- (.9,1.2);
\draw[thick,densely dotted] (.4,3.4) -- (.4,2.6) -- (1.4,1.6);

\draw (.75,1.9) rectangle ++(.3,.5);
\draw (.75,1.9) -- (3,0);
\draw (.75,2.4) -- (3,5);

\begin{scope}[shift={(3,0)}]
\clip (0,0) rectangle (3,5);

\foreach \l in {-1,...,7}{
	\draw[fill=gray,thick] (.1+\l*.3,3.7-\l*.6) rectangle ++(.3,.6);
	\draw[fill=white,thick] (.4+\l*.3,3.1-\l*.6) -- ++(0,.6) -- ++(-.3,0) -- ++(0,-.4) -- cycle;
}

\foreach \l in {-1,...,5}{
	\draw[fill=gray!50,thick] (2.6-\l*.3,2.6-\l*.6) rectangle ++(.3,.6);
	\draw[fill=white,thick] (2.6-\l*.3,2.0-\l*.6) -- ++(0,.6) -- ++(.3,0) -- ++(0,-.4) -- cycle;
}
\foreach \l in {1,...,4}{
	\draw[fill=gray!50,thick] (2.6-\l*.3,2.6+\l*.6) rectangle ++(.3,.6);
	\draw[fill=white,thick] (2.9-\l*.3,2.0+\l*.6) -- ++(0,.6) -- ++(-.3,0) -- ++(0,-.4) -- cycle;
}

\draw[thick,dashed] (1.6,.7) rectangle ++(.3,.6);

\end{scope}

\draw (3,0) rectangle ++(3,5);

\end{tikzpicture}
\end{center}
\caption{A visualization of Lemma~\ref{lem:NoOverlap}. On the left, the overlapping macro-cells $P$ and $R$, and the two chains of subcells. On the right, the non-blank macro-cells $P^{(s)}_{(j_s)}$ and $R^{(s)}_{(k_s)}$, and under them, the macro-cells $P^{(s-1)}_{(j_s)}$ and $R^{(s-1)}_{(k_s)}$, of the simulation $S_{n-1}$. Of these, some necessarily overlap, shown by the dashed border.}
\label{fig:OverlapLemma}
\end{figure}
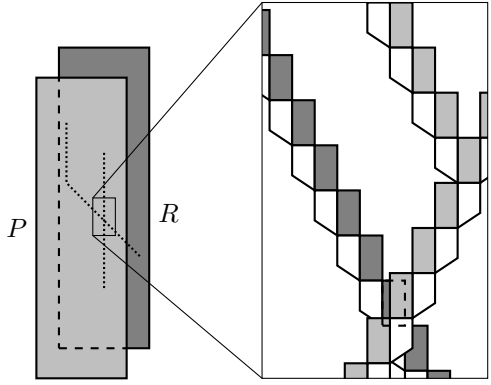

\begin{lemma}
\label{lem:NoOverlap}
Let $\eta \in \tilde \tra_{\varCA_0}$ and $n \in \N$, and consider the simulation $S_n$ of $\varCA_n$ by $\varCA_0$.
Suppose that we have $P = \eta^{[0,W_n-1]}_{[0,B_n-1]} \in C_n$ and $R = \eta^{[t,t+W_n-1]}_{[i,i+B_n-1]} \in C_n$ for some $i \in [0,\frac23 B_n]$ and $t \in [0,\frac23 W_n]$.
Then $(i,t) = (0,0)$, so that $P = R$.
\end{lemma}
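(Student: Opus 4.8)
The plan is to argue by induction on $n$, peeling off one level of the simulation hierarchy $S_n = S'_{n-1}\circ S_{n-1} = S^u_{n-1}\circ S^s_{n-1}\circ S_{n-1}$ at a time; this layered structure is exactly why the proof naturally splits into several smaller claims. For the base case $n=0$ the products $B_0=W_0=1$ are empty and $S_0$ is the identity simulation, so the hypotheses $i\in[0,\tfrac23]$ and $t\in[0,\tfrac23]$ immediately force $i=t=0$ and $P=R$. For the inductive step I assume the statement for $n-1$. Since $S_{n-1}$ is rigid by Lemma~\ref{lem:ChainRigid}, the image $\bar\eta:=\pi_{n-1}(\eta)$ lies in $\tilde\tra_{\varCA_{n-1}}$, and $P$, $R$ are each tiled by macro-cells of $S_{n-1}$.

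The first goal is to show that $R$ is aligned to the $S_{n-1}$-grid, i.e.\ $B_{n-1}\mid i$ and $W_{n-1}\mid t$. Because the intersection of $P$ and $R$ contains a rectangle of dimensions at least $\tfrac13 B_n\times\tfrac13 W_n$, one can pick a macro-cell $A\subseteq P$ and a macro-cell $B\subseteq R$ of $S_{n-1}$ that overlap, with relative offset $(\delta_1,\delta_2)$ satisfying $|\delta_1|<B_{n-1}$ and $|\delta_2|<W_{n-1}$. Using the connecting property of $S_{n-1}$ and the parent lemmas (Lemma~\ref{lem:SparseParents}, Lemma~\ref{lem:UnivParents}) applied inside the macro-cells, one argues that the bases of $A$ and $B$ are both directly connected to a common live cell lying in the overlap region; this both fixes the sign of $(\delta_1,\delta_2)$ (putting it into the first quadrant) and keeps it in the range $[0,\tfrac23 B_{n-1}]\times[0,\tfrac23 W_{n-1}]$. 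After a suitable spatial/temporal translation of $\eta$ — legitimate since $\tilde\tra_{\varCA_0}$ is closed under shifts — the inductive hypothesis gives $A=B$, hence $\delta_1=\delta_2=0$, hence $B_{n-1}\mid i$ and $W_{n-1}\mid t$.

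Once $R$ sits on the $S_{n-1}$-grid, $\pi_{n-1}(P)$ and $\pi_{n-1}(R)$ are two macro-cells of $S'_{n-1}$ inside $\bar\eta\in\tilde\tra_{\varCA_{n-1}}$, at relative offset $(i',t')$ with $i'\le\tfrac23 Q'_{n-1}$ and $t'\le\tfrac23 U'_{n-1}$. It remains to prove the single-layer statement that two such macro-cells of $S'_{n-1}=S^u_{n-1}\circ S^s_{n-1}$ must coincide, and here I would repeat the same move once more. First, using Lemma~\ref{lem:SparseParents}, Lemma~\ref{lem:SparseEvenCells} and the observation preceding Lemma~\ref{lem:Sparsity} (weakly connected cells of equal kind and counter value have coordinates divisible by $M_{n-1}$), one shows that the two $S'_{n-1}$-macro-cells are aligned to the $S^s_{n-1}$-grid, i.e.\ $M_{n-1}\mid i'$ and $M_{n-1}\mid t'$. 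Then, since $S^s_{n-1}$ is weakly rigid (Lemma~\ref{lem:SparseRigidity}), $\pi^s_{n-1}(\bar\eta)\in\hat\tra_{\CA_{n-1}}$, and the $\pi^s_{n-1}$-images of the two macro-cells are macro-cells of the universal simulation $S^u_{n-1}$ at offset $(i'/M_{n-1},\,t'/M_{n-1})$, with $i'/M_{n-1}\le\tfrac23 Q_{n-1}<Q_{n-1}$ and $t'/M_{n-1}\le\tfrac23 U_{n-1}<U_{n-1}-3Q_{n-1}$ (the latter because $U_{n-1}$ is much larger than $Q_{n-1}$, by~\eqref{eq:UBound}). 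Corollary~\ref{cor:NoOverlap} then forces these to coincide, so $i'=t'=0$, and therefore $i=t=0$.

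The main obstacle is the geometric alignment step that is used twice above: showing that the bases of two overlapping (sub-)macro-cells are directly connected to a common cell, so that their offset is pinned into the quadrant and the numerical range where the inductive (resp.\ base-case, resp.\ Corollary~\ref{cor:NoOverlap}) no-overlap statement applies. This is precisely the kind of argument flagged after the definition of connecting simulations, and carrying it out rigorously requires iterating the parent lemmas through all the layers of the hierarchy while carefully tracking how far directed paths wander inside a macro-cell, as well as tracking the rigidity class ($\tilde\tra$ versus $\hat\tra$) of each pushed-forward configuration.
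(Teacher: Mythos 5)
Your overall skeleton does match the paper's proof (induction on $n$; first force $B_{n-1}\mid i$ and $W_{n-1}\mid t$ via the induction hypothesis; then force $M_{n-1}\mid i'$, $M_{n-1}\mid t'$ via Lemma~\ref{lem:SparseEvenCells}; then finish with Corollary~\ref{cor:NoOverlap}), but the step you treat as routine is exactly the hard part, and your justification for it rests on a false premise. You assert that $P$ and $R$ are ``each tiled by macro-cells of $S_{n-1}$'' and that, since $P\cap R$ contains a $\tfrac13 B_n\times\tfrac13 W_n$ rectangle, one can simply ``pick a macro-cell $A\subseteq P$ and a macro-cell $B\subseteq R$ that overlap.'' Because of sparsification this tiling does not exist: macro-cells of $S_{n-1}$ have non-blank bases, and most $B_{n-1}\times W_{n-1}$ sub-rectangles of $P$ are blank. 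The sub-rectangles of $P$ that Lemma~\ref{lem:MacroCells} guarantees to be non-blank macro-cells sit only on a sub-lattice of spacing $M_{n-1}$ (in units of $B_{n-1}\times W_{n-1}$), and likewise for $R$; for a generic offset $(i\bmod M_{n-1}B_{n-1},\,t\bmod M_{n-1}W_{n-1})$ no element of $P$'s lattice overlaps any element of $R$'s lattice at all, let alone by more than $\tfrac13$ in both dimensions. So the existence of the pair $(A,B)$ cannot be extracted from the size of $P\cap R$ alone, and your appeal to the parent lemmas (Lemmas~\ref{lem:SparseParents}, \ref{lem:UnivParents}) to connect the bases of $A$ and $B$ to ``a common live cell'' presupposes the pair whose existence is in question.

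The paper fills this gap with a specific geometric construction: using Lemma~\ref{lem:MacroCells} to find a column of $0$-demanding cells in $\zeta=\pi^s_{n-1}(\pi_{n-1}(\eta))$ and the connecting property of $S^s_{n-1}$, it builds a \emph{chain} of non-blank $S_{n-1}$-macro-cells inside $P$ at every vertical grid position (pinned at horizontal position $\tfrac34 Q'_{n-1}$ at the lattice times), and a second chain inside $R$ that is pinned at the right edge at its bottom and at the left edge higher up; the offsets $i\le\tfrac23 B_n$, $t\le\tfrac23 W_n$ then force the two chains to cross. Even then, because the vertical grids of $P$ and $R$ are misaligned, one needs the additional claim (proved via Lemma~\ref{lem:MacroCells} one level further down, on $\theta$) that the rectangle directly below each chain element is again a macro-cell of $S_{n-1}$, in order to extract a pair overlapping by more than $\tfrac13$ in both dimensions for the induction hypothesis. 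The same crossing is what makes the bases of the relevant sparse-simulation macro-cells weakly connected, which is the hypothesis of Lemma~\ref{lem:SparseEvenCells} in your second alignment step; you invoke that lemma without establishing the weak connection. Your final step (divide by $M_{n-1}$, pass to $\hat\tra_{\CA_{n-1}}$, apply Corollary~\ref{cor:NoOverlap} using $\tfrac23 U_{n-1}<U_{n-1}-3Q_{n-1}$) is correct and agrees with the paper, but as it stands the proposal is missing the chain-and-crossing argument that carries the whole proof.
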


\begin{proof}
By induction on $n$.
For $n = 0$ the claim is trivial since the macro-cells of the simulation $S_0$ are just cells, so assume $n \geq 1$.
Let $\xi = \pi_{n-1}(\eta)$, $\zeta = \pi^s_{n-1}(\xi)$ and $\theta = \pi^s_{n-2}(\pi_{n-2}(\eta))$ (if $n \geq 2$).
Note that we have $\xi = \pi^u_{n-2}(\theta)$.
Since the simulations $S_k$ and $S^s_k$ are rigid and weakly rigid by Lemma~\ref{lem:ChainRigid} and Lemma~\ref{lem:SparseRigidity}, respectively, we have that $\xi \in \tilde \tra_{\varCA_{n-1}}$, $\zeta \in \hat \tra_{\CA_{n-1}}$ and $\theta \in \hat \tra_{\CA_{n-2}}$.
Next, we dissect the macro-cells $P$ and $R$ into smaller macro-cells of the lower-level simulation $S_{n-1}$.
For all $(j,s) \in [0,Q'_{n-1}-1] \times [0, U'_{n-1}-1]$, denote $P^{(j)}_{(s)} = P^{[sW_{n-1}, (s+1)W_{n-1}-1]}_{[jB_{n-1}, (j+1)B_{n-1}-1]}$, and similarly for $R$.

\begin{claim}
Let $(j,s) \in [0,Q_{n-1}-1] \times [Q_{n-1}, U_{n-1}-2Q_{n-1}]$.
Then the rectangles $P^{(s M_{n-1})}_{(j M_{n-1})}$ and $R^{(s M_{n-1})}_{(j M_{n-1})}$ are non-blank macro-cells of the simulation $S_{n-1}$.
\end{claim}

\begin{proof}[Proof of claim]
First, we have $T = \zeta^{[0,U_{n-1}-1]}_{[0,Q_{n-1}-1]} \in C^u_{n-1}$, since $P$ is mapped to $T$ by the state function $\pi^s_{n-1} \circ \pi_{n-1}$.
Lemma~\ref{lem:MacroCells} implies that the cell $\zeta^s_j$ is consistent with the macro-cell $T$.
But then the square pattern $T' = \xi^{[s M_{n-1}, (s+1) M_{n-1}-1]}_{[j M_{n-1}, (j+1) M_{n-1}-1]}$ is a macro-cell of the sparse simulation $S^s_{n-1}$, since it is mapped to $\zeta^s_j$ by the state function $\pi^s_{n-1}$.
This implies that $\xi^{s M_{n-1}}_{j M_{n-1}}$ is the base of the macro-cell $T'$.
We have $\xi^{s M_{n-1}}_{j M_{n-1}} = \pi_{n-1}(P^{(s M_{n-1})}_{(j M_{n-1})})$ by definition of $\xi$, implying that $P^{(s M_{n-1})}_{(j M_{n-1})}$ is a non-blank macro-cell of $S_{n-1}$.
The claim for $R^{(s M_{n-1})}_{(j M_{n-1})}$ is proved analogously, after translating $R$ to the origin.
\end{proof}

Consider now the trajectory $\zeta$ of $\hat \CA_{n-1}$.
From Lemma~\ref{lem:MacroCells} we in particular deduce that the vertical column $\zeta^{[Q_{n-1}+1,U_{n-1}-2Q_{n-1}]}_{\frac34 Q_{n-1}}$ consists of $0$-demanding cells.
Since the sparse simulation $\pi^s_{n-1}$ is connecting by Lemma~\ref{lem:SparseConnect}, this implies that in the trajectory $\xi$, there is a chain $(\xi^s_{j_s})_{s = Q'_{n-1}}^{U'_{n-1} - 2Q'_{n-1}}$ of non-blank cells such that $j_{m M_{n-1}} = \frac34 Q'_{n-1}$ for all $m \in [Q_{n-1}, U_{n-1} - 2Q_{n-1}]$, and $j_{s-1} = j_s + \delta$ for some $\delta \in \{-1,0,1\}$ for each $s$.
This chain corresponds, via the state function $\pi_{n-1}$, to a chain $(P^{(s)}_{(j_s)})_{s = Q'_{n-1}}^{U'_{n-1} - 2Q'_{n-1}}$ of non-blank macro-cells of $S_{n-1}$ in the trajectory $\eta$.
Similarly, there is a chain of non-blank macro-cells $(R^{(s)}_{(k_s)})_{s = Q'_{n-1}}^{U'_{n-1} - 2Q'_{n-1}}$ such that $k_{m M_{n-1}} = 0$ for all $m \in [2Q_{n-1}, U_{n-1} - 2Q_{n-1}]$, but $k_{Q'_{n-1}} = Q'_{n-1}-1$.
These chains are represented by dotted lines on the left hand side of Figure~\ref{fig:OverlapLemma}, and the macro-cells $P^{(s)}_{(s_j)}$ and $R^{(s)}_{(k_s)}$ are represented by the gray rectangles on the right hand side.

\begin{claim}
Let $s \in [Q'_{n-1}, U'_{n-1} - 2Q'_{n-1}]$.
If $n \geq 2$, then the rectangles $P^{(s-1)}_{(j_s)}$ and $R^{(s-1)}_{(k_s)}$ under the macro-cells of the aforementioned chains are themselves macro-cells of the simulation $S_{n-1}$.
\end{claim}

These lower rectangles are represented by the white rectangles on the right hand side of Figure~\ref{fig:OverlapLemma}.

\begin{proof}[Proof of claim]
First, the rectangle $T'' = \theta^{[s U_{n-2}, (s+1) U_{n-2}-1]}_{[j_s Q_{n-2}, (j_s+1) Q_{n-2}-1]}$ is a macro-cell of the universal simulation $S^u_{n-2}$ in a non-blank state, since it is mapped to the non-blank cell $\xi^s_{j_s}$ by the state function $\pi^u_{n-2}$.
By the first item of Lemma~\ref{lem:MacroCells}, the rectangle $\theta^{[(s-1) U_{n-2}, U_{n-2}-1]}_{[j_s Q_{n-2}, (j_s+1) Q_{n-2}-1]}$ under $T''$ is also a macro-cell of $S^u_{n-2}$.
But these cells are the $\pi^s_{n-2} \circ \pi_{n-2}$-images of the corresponding rectangles $P^{(s-1)}_{(j_s)}$, which are then macro-cells of $S_{n-1}$ (not necessarily in a non-blank state).
The case of the $R^{(s-1)}_{(k_s)}$ is again similar.
\end{proof}

Since the rectangle $R^{(3Q'_{n-1})}_{(k_{3Q'_{n-1}})}$ lies to the left of every $P^{(s)}_{(j_s)}$ but $R^{(2Q'_{n-1})}_{(k_{2Q'_{n-1}})}$ does not, there are some $s, r \in [Q'_{n-1},U'-2Q'_{n-1}]$ and $\delta, \epsilon \in \{0,1\}$ such that the rectangles $P^{(s-\delta)}_{(j_s)}$ and $R^{(r-\epsilon)}_{(k_r)}$ overlap by more than $\frac13 B_{n-1}$ cells horizontally and $\frac13 W_{n-1}$ cells vertically.
If $n \geq 2$, both rectangles are macro-cells of the simulation $S_{n-1}$, and the induction hypothesis implies that $P^{(s-\delta)}_{(j_s)} = R^{(r-\epsilon)}_{(k_r)}$.
Thus $i$ is divisible by $B_{n-1}$ and $t$ by $W_{n-1}$, and we denote $i' = i / B_{n-1}$ and $t' = t / W_{n-1}$.
If $n = 1$, we have $B_{n-1} = W_{n-1} = 1$, so the divisibility property holds trivially.

Now, the rectangle $R' = \xi^{[t', t' + U'_{n-1}]}_{[i', i' + Q'_{n-1}]}$ is mapped to $R$ by the state function $\pi_{n-1}$, and thus is a macro-cell of the simulation $S'_{n-1}$ of $\varCA_n$ by $\varCA_{n-1}$.
Similarly, we have $P' = \xi^{[0, 0 + U'_{n-1}]}_{[0, 0 + Q'_{n-1}]} \in C'_{n-1}$.
The two chains $(\xi^s_{j_s})_{s = Q'_{n-1}}^{U'_{n-1} - 2Q'_{n-1}}$ and $(\xi^{t' + s}_{i' + k_s})_{s = Q'_{n-1}}^{U'_{n-1} - 2Q'_{n-1}}$ cross each other in the trajectory $\xi$, so that the topmost cells $\xi^{U'_{n-1} - 2Q'_{n-1}}_0$ and $\xi^{t' + U'_{n-1} - 2Q'_{n-1}}_{i'}$ on the left borders of $P'$ and $R'$ are weakly connected.
Since the two cells are bases of macro-cells of the sparse simulation $\pi^s_{n-1}$, Lemma~\ref{lem:SparseEvenCells} implies that $i'$ and $t'$ are both divisible by $M_{n-1}$, and we denote $i'' = i' / M_{n-1}$ and $t'' = t' / M_{n-1}$.
Then the rectangles $R'' = \zeta^{[t'', t''+U_{n-1}-1]}_{[i'', i''+Q_{n-1}-1]}$ and $P'' = \zeta^{[0, U_{n-1}-1]}_{[0, Q_{n-1}-1]}$ are macro-cells of the universal simulation $S^u_{n-1}$.
Finally, we have $i'' \leq \frac23 Q_{n-1}$ and $t'' \leq \frac23 U_{n-1}$ by assumption, so the claim follows from Corollary~\ref{cor:NoOverlap}.
\end{proof}

Finally, we claim that a non-blank macro-cell of any level cannot be very far from a non-blank macro-cell of a higher level.
The result follows from the rigidity and connectivity of our simulations (Lemma~\ref{lem:ChainRigid}), together with Lemma~\ref{lem:SparseParents} and Lemma~\ref{lem:UnivParents}, which proved the analogous results for the sparse and universal simulations.

\begin{lemma}
\label{lem:Parents}
Let $n \in \N$, and let $\eta \in \tilde \tra_{\varCA_0}$.
Suppose that $P = \eta^{[t,t+W_n-1]}_{[i,i+B_n-1]}$ is a non-blank macro-cell of $S_n$ for some $(i,t) \in \Z^2$.
Then there exist coordinates $j \in [i-3B_{n+1},i+2B_{n+1}]$ and $s \in [t-2W_{n+1},t-W_{n+1}+1]$ such that $R = \eta^{[s,s+W_{n+1}-1]}_{[j,j+B_{n+1}-1]}$ is a non-blank macro-cell of $S_{n+1}$ and the base of $P$ is directly connected to that of $R$. Furthermore, $B_n$ divides $|i-j|$ and $W_n$ divides $|t-s|$.
\end{lemma}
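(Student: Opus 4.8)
The plan is to peel one level off the amplifier and reduce to the two ``parents'' lemmas already available --- Lemma~\ref{lem:SparseParents} for the sparse simulation and Lemma~\ref{lem:UnivParents} for the universal simulation --- glued together by the rigidity and connectivity of the chain (Lemmas~\ref{lem:ChainRigid} and~\ref{lem:CompProps}). Since $\tilde\tra_{\varCA_0}$ and the graph $G$ are shift invariant, I translate so that $(i,t)=(0,0)$: thus $P=\eta^{[0,W_n-1]}_{[0,B_n-1]}\in C_n$ with $\pi_n(P)\neq B_{\varalp_n}$, and I must produce a non-blank macro-cell $R$ of $S_{n+1}=S^u_n\circ S^s_n\circ S_n$, in the asserted coordinate window, whose base is joined to the base of $P$ by a directed path in $G(\eta)$. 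If $n\notin N$, the middle simulation $S^s_n$ is the identity and the corresponding step below is simply skipped; that case is easier, so assume $n\in N$.

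First, descend two levels. As $S_n$ is rigid, $\xi:=\pi_n(\eta)\in\tilde\tra_{\varCA_n}$ and $\xi^0_0=\pi_n(P)$ is a live cell of the sparsified automaton $\varCA_n$. Lemma~\ref{lem:SparseParents} supplies a macro-cell $P_1$ of $S^s_n$ at $\xi$-coordinates $(j_1,-s_1)$, with $s_1\in[0,2n]$, $j_1\in[-n,n]$, whose base is directly connected to $\xi^0_0$ in $G(\xi)$. Shifting $\eta$ by the multiple $(j_1 B_n,-s_1 W_n)$ of $(B_n,W_n)$ yields $\eta'\in\tilde\tra_{\varCA_0}$ whose level-$n$ image $\xi'=\pi_n(\eta')$ is the shift of $\xi$ that places the base of $P_1$ at the origin. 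Now $\zeta:=\pi^s_n(\xi')\in\hat\tra_{\CA_n}$ by weak rigidity of $S^s_n$ (Lemma~\ref{lem:SparseRigidity}), and $\zeta^0_0=\pi^s_n(P_1)$ is a live cell of the universal simulator $\CA_n$. Lemma~\ref{lem:UnivParents} then supplies a non-blank macro-cell $R_2$ of $S^u_n$ at $\zeta$-coordinates $(j_2,s_2)$, with $s_2\in[-2U_n,-U_n+1]$, $j_2\in[-3Q_n+1,2Q_n-1]$, whose base is directly connected to $\zeta^0_0$ in $G(\zeta)$.

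Next, pull $R_2$ back and recognise it as a macro-cell of $S_{n+1}$. Because $\pi^s_n$ and $\pi_n$ act blockwise and $j_2 M_n, s_2 M_n$ are multiples of $M_n$, the $Q'_n\times U'_n$ rectangle $R_1$ of $\xi'$ at $(j_2 M_n, s_2 M_n)$ satisfies $\pi^s_n(R_1)=R_2\in C^u_n$, so $R_1\in C'_n$ (as $C'_n=(\pi^s_n)^{-1}(C^u_n)$); likewise the $B_{n+1}\times W_{n+1}$ rectangle $R$ of $\eta'$ at $(j_2 M_n B_n, s_2 M_n W_n)$ satisfies $\pi_n(R)=R_1$, so $R\in C_{n+1}$ (as $C_{n+1}=\pi_n^{-1}(C'_n)$), and $\pi_{n+1}(R)=\pi^u_n(\pi^s_n(R_1))=\pi^u_n(R_2)\neq B_{\varalp_{n+1}}$. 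Undoing the two shifts, $R$ sits at $\eta$-coordinates $\bigl((j_1+M_n j_2)B_n,\ (-s_1+M_n s_2)W_n\bigr)$; a short estimate from the ranges of $j_1,s_1,j_2,s_2$ together with $B_{n+1}=Q_n M_n B_n$, $W_{n+1}=U_n M_n W_n$ places $R$ in the required window, and the displacements are visibly divisible by $B_n$ and $W_n$.

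It remains --- and this is the delicate part --- to exhibit the directed path. The directed path from the base of $R_2$ to $\zeta^0_0$ in $G(\zeta)=G(\pi^s_n(\xi'))$ is a chain of adjacencies of the $\pi^s_n$-image; pushing each through the connecting property of $S^s_n$ (Lemma~\ref{lem:SparseConnect}) gives a directed path in $G(\xi')$ between the bases of the corresponding $S^s_n$-macro-cells, from the base of the $S^s_n$-macro-cell over the base of $R_2$ to the base of $P_1$. Each such base is the $\pi_n$-image of an $S_n$-macro-cell base in $\eta'$, so pushing this path through the connecting property of $S_n$ (Lemma~\ref{lem:ChainRigid}) produces a directed path in $G(\eta')$ from the base of the $S_n$-macro-cell over the base of $R_2$ to the base of the $S_n$-macro-cell $P^\sharp$ over the base of $P_1$. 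The key point is that, since $S^s_n$ has base coordinate $(0,0)$ and $S^u_n$ has base coordinate $(0,U_n-2Q_n)$, the base-coordinate composition rule (Lemma~\ref{lem:Composition}) forces the first cell of this path to coincide exactly with the base of $R$. Concatenating with the lift --- again via $S_n$ connecting --- of the $G(\xi)$-path from the base of $P_1$ to $\xi^0_0$ (a directed path in $G(\eta')$ from the base of $P^\sharp$ to the base of $P$) gives a directed path in $G(\eta')=G(\eta)$ from the base of $R$ to the base of $P$, finishing the proof. The main obstacle will be exactly this chain of liftings: keeping straight which cells are macro-cell bases at each level, and verifying that the accumulated base-coordinate offsets make the two lifted paths meet precisely at the base of $R$; the coordinate estimates and the recognition of $R$ as a macro-cell are routine.
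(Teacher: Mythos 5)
Your decomposition is exactly the one the paper intends (the paper gives no detailed proof, only the remark that the lemma follows from Lemma~\ref{lem:ChainRigid} together with Lemmas~\ref{lem:SparseParents} and~\ref{lem:UnivParents}), and the part you single out as delicate is handled correctly: lifting the two directed paths blockwise through the connecting properties of $S^s_n$ and $S_n$, and identifying the first lifted base with the base of $R$ via the base-coordinate rule of Lemma~\ref{lem:Composition} (with $S^s_n$ based at $(0,0)$ and $S^u_n$ at $(0,U_n-2Q_n)$), are right, as are the recognition of $R$ as a non-blank macro-cell of $S_{n+1}$ and the divisibility claims.

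The step you dismiss as routine, however, does not deliver the window stated in the lemma. With the sparse offsets $j_1\in[-n,n]$, $s_1\in[0,2n]$ and the universal offsets $j_2\in[-3Q_n+1,2Q_n-1]$, $s_2\in[-2U_n,-U_n+1]$, the position of $R$ relative to $(i,t)$ is $\bigl((j_1+j_2M_n)B_n,\,(-s_1+s_2M_n)W_n\bigr)$. Horizontally this lies in $[-3B_{n+1}+(n+1)B_n,\;2B_{n+1}-(n+1)B_n]$, inside the required range, because the $\pm1$ slack in Lemma~\ref{lem:UnivParents} scales to $M_nB_n>nB_n$. Vertically the range is $[-2W_{n+1}-2nW_n,\;-W_{n+1}+M_nW_n]$, which overshoots the stated $[-2W_{n+1},\,-W_{n+1}+1]$ at both ends: at the bottom because the bound $t-2U$ of Lemma~\ref{lem:UnivParents} as stated has no slack to absorb the $2nW_n$ contributed by Lemma~\ref{lem:SparseParents} (its proof in fact yields $t-2U+2$, which would repair this end), and at the top because the ``$+1$'' there is one $\CA_n$-cell, i.e.\ $M_nW_n$ rows of $\eta$, not one row --- indeed when the $\CA_n$-cell above the sparse base has age $U_n-1$, the macro-cell produced by Lemma~\ref{lem:UnivParents} has its top row level with that cell and genuinely protrudes above $t$, so no bookkeeping recovers $s\leq t-W_{n+1}+1$ along your route. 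So as a proof of the lemma with the printed constants the estimate does not close; you should either prove the lemma with the correspondingly enlarged window (which is all that Corollary~\ref{cor:Parents} and the pigeonhole claim in Theorem~\ref{thm:MainResult} actually use, since only the number of $(B_{n+1},W_{n+1})$-blocks covered matters there), or add an argument that replaces the found macro-cell by a lower one when it overshoots, while keeping the connectivity and the lower bound.
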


By the above and a simple induction we obtain the following lemma, which states that non-blank cells are found only in the vicinity of correct macro-cells.

\begin{corollary}
\label{cor:Parents}
Let $n \in \N$, and let $\eta \in \tilde \tra_{\varCA_0}$.
Suppose that $\eta^i_t \neq B_{\varalp_0}$ for some $(i,t) \in \Z^2$.
Then there exist coordinates $j \in [i-4B_n,i+3B_n]$ and $s \in [t-3W_n,t]$ such that $P = \eta^{[s,s+W_n-1]}_{[j,j+B_n-1]}$ is a non-blank macro-cell of $S_n$.
\end{corollary}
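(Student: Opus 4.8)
The plan is to prove Corollary~\ref{cor:Parents} by induction on $n$, using Lemma~\ref{lem:Parents} as the engine that climbs one level of the simulation hierarchy at a time. The base case is $n=0$: since $S_0$ is the identity simulation of $\varCA_0$ by itself, we have $B_0=W_0=1$ and a non-blank macro-cell of $S_0$ is literally a non-blank cell, so $P=\eta^{[t,t]}_{[i,i]}$ does the job and the required containments $i\in[i-4B_0,i+3B_0]$, $t\in[t-3W_0,t]$ hold trivially. The inductive step is where all the work is, but it is short: assuming the statement for $n$, I take the non-blank macro-cell $P=\eta^{[s,s+W_n-1]}_{[j,j+B_n-1]}$ of $S_n$ with $j\in[i-4B_n,i+3B_n]$ and $s\in[t-3W_n,t]$, and feed it to Lemma~\ref{lem:Parents}.

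Lemma~\ref{lem:Parents} (applied with the coordinates $(j,s)$ of $P$ in place of $(i,t)$) yields a non-blank macro-cell $R=\eta^{[s',s'+W_{n+1}-1]}_{[j',j'+B_{n+1}-1]}$ of $S_{n+1}$ with $j'\in[j-3B_{n+1},j+2B_{n+1}]$ and $s'\in[s-2W_{n+1},s-W_{n+1}+1]$. Combining with the inductive bounds gives $j'\in[i-4B_n-3B_{n+1},\,i+3B_n+2B_{n+1}]$ and $s'\in[t-3W_n-2W_{n+1},\,t-W_{n+1}+1]$, where for the upper bound on $s'$ I only need $W_{n+1}\ge 1$ to conclude $s'\le s\le t$. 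So the remaining task is purely arithmetic: absorb the extra terms to recover the shape $[i-4B_{n+1},i+3B_{n+1}]$ horizontally and $[t-3W_{n+1},t]$ vertically. This works as soon as $4B_n\le B_{n+1}$ and $3W_n\le W_{n+1}$, which follows immediately from $B_{n+1}=Q_nM_nB_n$, $W_{n+1}=U_nM_nW_n$ together with the standing lower bounds $M_n\ge 1$, $Q_n\ge 10$ and $U_n\ge 6Q_n\ge 60$ on the universal simulators: indeed $B_{n+1}\ge 10B_n$ and $W_{n+1}\ge 60W_n$.

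I do not expect any real obstacle here; the only points requiring minor care are (i) that Lemma~\ref{lem:Parents} consumes a macro-cell of $S_n$ and produces one of $S_{n+1}$, so the induction genuinely chains with no type mismatch, (ii) that the output of Lemma~\ref{lem:Parents} is again non-blank, which is exactly what is needed to reapply it at the next level, and (iii) correctly identifying the hypothesis ``$\eta^t_i$ non-blank'' with ``$\eta^{[t,t]}_{[i,i]}$ is a non-blank macro-cell of $S_0$'' in the base case. If one wanted sharper constants one could instead iterate the displacement bounds directly and sum the geometric series $\sum_k B_k \le B_n\sum_{k\ge 0} (Q_n')^{-k}$ and likewise for $W_k$, but the inductive bookkeeping above already yields the stated constants $4$ and $3$ with room to spare, so I would keep the argument in that form.
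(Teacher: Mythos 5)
Your proposal is correct and is exactly the paper's (unwritten) argument: the paper derives Corollary~\ref{cor:Parents} from Lemma~\ref{lem:Parents} "by a simple induction," which is precisely your induction on $n$ with the identity simulation $S_0$ as the base case. Your arithmetic check that the displacements absorb into the constants $4$ and $3$ (using $B_{n+1}\geq 4B_n$ and $W_{n+1}\geq 3W_n$, which hold since $B_{n+1}=Q_nM_nB_n$, $W_{n+1}=U_nM_nW_n$ with $U_n\geq 6Q_n$ and $Q_n$ large) is sound and fills in the detail the paper leaves implicit.
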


\subsection{Unique Ergodicity}

We are now ready to prove our main result, the unique ergodicity of $\varCA_0$, in the case that the effective set $N$ is infinite.

\begin{theorem}
\label{thm:MainResult}
The cellular automaton $\varCA_0 = G_s^N(0,\CA_0)$ is not nilpotent.
It is uniquely ergodic if and only if $N$ is infinite.
\end{theorem}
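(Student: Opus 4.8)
The plan is to treat the two claims separately, using Corollary~\ref{cor:UEChara} (equivalently, its reformulation: $\varCA_0$ is uniquely ergodic iff the asymptotic density of blank cells in the central column is $1$ for every $\eta \in \tra_{\varCA_0}$) as the criterion for unique ergodicity, and the lemmas of this subsection --- Lemma~\ref{lem:ChainRigid} (rigidity of the $S_n$), Lemma~\ref{lem:NoOverlap} (macro-cells of $S_n$ do not overlap much), Lemma~\ref{lem:Parents} and Corollary~\ref{cor:Parents} (non-blank cells live near higher-level macro-cells) --- together with Lemma~\ref{lem:MacroCells} and Lemma~\ref{lem:Sparsity} as the main tools. \textbf{Non-nilpotency (for any $N$).} Fix $n$ and pick a one-directional trajectory $\eta$ of $\varCA_n$ with $\eta^1 \neq \INF B_{\varalp_n}^\infty$; such $\eta$ exists because by inspection of Algorithm~\ref{alg:MainBody} and Algorithm~\ref{alg:Sparsified} neither the universal simulator (a full colony grid survives one step) nor the sparsification transformation produces a CA that blanks every configuration in a single step, and $\varCA_n = G_s^N(n,\CA_n)$ with $\CA_n$ a universal simulator. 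By $\tra^+_{\varCA_n} \subset \pi_n(\tra^+_{\varCA_0})$ there is $\xi \in \tra^+_{\varCA_0}$ with $\pi_n(\xi) = \eta$; since $\pi_n(\xi)^1 \neq \INF B_{\varalp_n}^\infty$, some $B_n \times W_n$ block of $\xi$ in rows $[W_n, 2W_n-1]$ lies in $C_n$, so (by the definition of a simulation) its base cell is non-blank at some time $\geq W_n$. As $W_n \to \infty$, this forbids $\varCA_0^m$ from being the constant blank map for any $m$, so $\varCA_0$ is not nilpotent.

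\textbf{$N$ infinite $\Rightarrow$ uniquely ergodic.} It suffices to bound, uniformly in $\eta \in \tra_{\varCA_0}$, the density of non-blank cells in the central column. Fix $\eta$, a window $[0,T)$ and a level $n$. By Corollary~\ref{cor:Parents} every non-blank cell of $\eta$ in column $0$ with time in $[0,T)$ sits near the base of a non-blank macro-cell of $S_n$; by Lemma~\ref{lem:NoOverlap} distinct non-blank macro-cells of $S_n$ cannot overlap by more than a third of their dimensions, so only $O(T/W_n + 1)$ of them meet column $0$ in the relevant slab, and each contributes at most $m_n$ cells to that column, where $m_n$ is the maximal number of non-blank cells in a single column of a non-blank macro-cell of $S_n$. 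Descending through the chain $S_n = S'_{n-1}\circ\cdots\circ S'_0$ and using that $\pi_k(\eta)\in\tilde\tra_{\varCA_k}$ (Lemma~\ref{lem:ChainRigid}), that a column of a universal macro-cell has at most $U_k$ non-blank cells, and that a column of a non-blank sparse macro-cell has only a bounded number of them out of $M_k$ (Lemma~\ref{lem:Sparsity}), one gets $m_n/W_n \leq \prod_{k\in N,\,k<n} 3/(2k+1)$. Hence the density of non-blank cells of $\eta$ in column $0$ over $[0,T)$ is $O(m_n/W_n)+O(m_n/T)$, which as $T\to\infty$ is at most $O\big(\prod_{k\in N,\,k<n} 3/(2k+1)\big)$; since $N$ is infinite this tends to $0$ with $n$, and all bounds are uniform in $\eta$. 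Thus $d_{\varCA_0}(B_{\varalp_0},\eta^0)=1$ for all $\eta \in \tra_{\varCA_0}$, and $\varCA_0$ is uniquely ergodic by Corollary~\ref{cor:UEChara}.

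\textbf{$N$ finite $\Rightarrow$ not uniquely ergodic.} Put $m^*=\max N$ (or $m^* = 0$ if $N = \emptyset$). Read as a lower bound, the same descent shows that a non-blank macro-cell of $S_K$ has, in each column, at least $\delta W_K$ non-blank cells, with $\delta = \prod_{k\geq 1}(1-2Q_k/U_k)\cdot\prod_{k\in N}\frac{1}{2k+1}>0$ independent of $K$ (a convergent product times a finite one, where the convergence uses Lemma~\ref{lem:MacroCells} and the freedom in the amplifier dimensions of Theorem~\ref{thm:Amplification} to arrange $\sum_k Q_k/U_k<\infty$); moreover, since sparsification acts only at levels $\leq m^*$, every window of at least $L_0$ consecutive rows of such a column already contains at least $\frac{\delta}{2}$ of its length in non-blank cells, for a constant $L_0$ depending only on $m^*$. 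For each large $K$ (so $K \notin N$), pull a full colony grid of $\varCA_K=\CA_K$ through $S_K$ --- it survives $U_K-2$ steps, dooming occurring only at age $U_K-1$ --- to obtain $\eta_K\in\tra^+_{\varCA_0}$ tiled by non-blank macro-cells of $S_K$ in rows $[0,(U_K-1)W_K)$; recenter in time and pass to a subsequential limit to get $\theta\in\tra_{\varCA_0}$ whose central column has non-blank density $\geq\frac{\delta}{2}$ on every sufficiently long window, hence upper density $\geq\frac{\delta}{2}$. Then $d_{\varCA_0}(B_{\varalp_0},\theta^0)\leq 1-\frac{\delta}{2}<1$ and $\theta^0\in\Omega_{\varCA_0}$, so $\varCA_0$ is not uniquely ergodic by Corollary~\ref{cor:UEChara}.

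\textbf{Main obstacle.} The crux is the level-by-level descent and its bookkeeping: controlling how the ``almost full'' structure of a universal macro-cell (Lemma~\ref{lem:MacroCells}), whose $O(Q_k)$ boundary rows are uncontrolled, composes with the $O(1/k)$ sparsity of a sparse macro-cell (Lemma~\ref{lem:Sparsity}) to yield the product estimates on $m_n$ and on $\delta$, all uniformly in the trajectory, and combining this cleanly with the overlap count of Lemma~\ref{lem:NoOverlap} and, in the ``only if'' direction, with the compactness/recentering step that upgrades finite-window bounds to a genuine asymptotic-density statement.
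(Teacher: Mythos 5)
Your non-nilpotency argument is essentially the paper's (pull a non-blank cell of a $\varCA_n$-trajectory back through $S_n$ and use that the base of the resulting macro-cell is non-blank at a time that grows with $n$), and your finite-$N$ direction follows the paper's idea in outline, though it is overbuilt: the paper just pulls back a never-blank trajectory of $\varCA_m$ for $m>\max N$ and observes that the base cells alone give non-blank density at least $W_m^{-1}$ in the central column; no uniform constant $\delta$, no infinite product $\prod_k(1-2Q_k/U_k)$ (whose convergence you do not get to ``arrange'' --- the dimensions in Theorem~\ref{thm:Amplification} are fixed by the construction), and no compactness/recentering into $\Omega_{\varCA_0}$ are needed, since Proposition~\ref{prop:UEChara} quantifies over all configurations. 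These extras in your write-up are asserted rather than proved, but the direction could be repaired trivially along the paper's lines.

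The genuine gap is in ``$N$ infinite $\Rightarrow$ uniquely ergodic'', in the central counting step. You charge each non-blank macro-cell of $S_n$ meeting the central column with at most $m_n$ non-blank cells, where $m_n$ is the maximal number of non-blank cells in a column \emph{inside} a non-blank macro-cell. But Corollary~\ref{cor:Parents} only places a non-blank cell of the trajectory \emph{near} some non-blank macro-cell of $S_n$ (within roughly $4B_n$ horizontally and $3W_n$ vertically), not inside one: blank-state macro-cells created during colony creation, debris compatible with $\tilde\tra_{\varCA_0}$, and the contents of sub-blocks whose simulated state is blank are non-blank cells lying outside every non-blank macro-cell of level $n$, and membership in $C_n$ puts no constraint whatsoever on blocks whose $\pi$-image is blank. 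So the cells you attribute to one macro-cell actually range over a window of height about $3W_n$ of the column, and your bound degenerates to the trivial one. Moreover, the product estimate $m_n/W_n\le\prod_{k\in N,\,k<n}3/(2k+1)$ is not a consequence of the cited lemmas: Lemma~\ref{lem:Sparsity} bounds the cells of a trajectory column \emph{connected to a fixed cell}, not the total number of non-blank cells in a column of a macro-cell pattern, and establishing such an internal-sparsity bound for macro-cells as they occur in nondeterministic trajectories is essentially the whole difficulty. The paper puts the sparsity gain in a different place and thereby avoids both issues: for a window of height $W_{n+1}$ it bounds the \emph{number} of relevant non-blank macro-cells of $S_n$ --- by Lemma~\ref{lem:Parents} and Lemma~\ref{lem:NoOverlap} they fall into at most $q$ spatio-temporal consistency classes with $q$ independent of $n$, and within each class their bases correspond to non-blank cells of $\pi_n(\eta)\in\tilde\tra_{\varCA_n}$, so Lemma~\ref{lem:Sparsity} bounds their number by $O(U'_{n+1}/n)$ --- and then charges each macro-cell with the \emph{trivial} $3W_n$ column cells, giving density $O(q/n)$ along the infinitely many sparsified levels, with no per-level product and no control of macro-cell interiors. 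To salvage your scheme you would either have to prove the internal-sparsity product (including for blank-simulating blocks and garbage), or switch to this ``few macro-cells times trivial window'' accounting.
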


\begin{proof}
First, we need to show that $\Phi_0$ is not nilpotent.
Let $n \in \N$, and let $\eta \in \tra^+_{\CA_n}$ be a one-directional trajectory of $\CA_n$ such that the central cell $\eta^0_0$ is not blank.
By the definition of the simulation $S_n$, there exists $\xi \in \tra^+_{\CA_0}$ such that $\pi_n(\xi) = \eta$.
This implies that $\xi^{[0, W_n-1]}_{[0, B_n-1]} \in C_n$, and by the definition of $C_n$, the base cell $\xi^{W_n - 2 B_n}_0$ is not blank.
Since $W_n - 2 B_n$ grows arbitrarily large with $n$, the CA $\CA_0$ is not nilpotent by definition.

Second, if there exists $m \in \N$ such that $n \notin N$ for all $n \geq m$, then there is a trajectory $\eta \in \tra^+_{\varCA_m}$ such that $\eta^t_i \neq B_{\varalp_n}$ for all $(i,t) \in \Z \times \N$.
Then for the trajectory $\xi \in \tra^+_{\varCA_0}$ with $\pi_n(\xi) = \eta$ we clearly have
\[ \limsup_{s \longrightarrow \infty} \frac{1}{s} \left| \left\{ t \in [0,s-1] \;\middle|\; \xi^t_0 \neq B_{\varalp_0} \right\} \right| \geq W_m^{-1} > 0. \]
Corollary~\ref{cor:UEChara} now shows that $\varCA_0$ is not uniquely ergodic.

Finally, suppose that $N$ is infinite, and let $\eta \in \tilde \tra_{\varCA_0}$ be a two-directional trajectory.
For $n \in \N$, we say that two macro-cells $\eta^{[t,t+W_n-1]}_{[i,i+B_n-1]}$ and $\eta^{[s,s+W_n-1]}_{[j,j+B_n-1]}$ of $S_n$ are \emph{spatio-temporally consistent} if $i \equiv j \bmod B_{n-1}$ and $t \equiv s \bmod W_{n-1}$.
We first use the pigeonhole principle to prove the following auxiliary result.

\begin{claim}
\label{claim:Pigeonhole}
There exists a number $q \in \N$ with the following property: Let $n \geq 1$, and let $K \subset [-4B_{n-1}, 3B_{n-1}] \times [-W_n, 2W_n - 1]$ be such that for all $(i,t) \in K$, the rectangle $P^{(t)}_{(i)} = \eta^{[t,t+W_{n-1}-1]}_{[i,i+B_{n-1}-1]}$ is a non-blank macro-cell of the simulation $S_{n-1}$, and for all distinct $(i,t), (j,s) \in K$, $P^{(t)}_{(i)}$ and $P^{(s)}_{(j)}$ are spatio-temporally inconsistent.
Then we have $|K| \leq q$.
\end{claim}

\begin{proof}[Proof of claim]
First, Lemma~\ref{lem:Parents} implies that for any rectangle $P^{(t)}_{(i)}$ for $(i,t) \in [-4B_{n-1}, 3B_{n-1} - 1] \times [-W_n, 2W_n - 1]$ which is a non-blank macro-cell of $S_{n-1}$, there exist $j \in [-4(Q'_n+1) B_{n-1},(3Q'_n+2) B_{n-1}]$ and $s \in [-3W_n, W_n]$ such that $R^{(s)}_{(j)} = \eta^{[s,s+W_n-1]}_{[j,j+B_n-1]}$ is a non-blank macro-cell of $S_n$, and $i \equiv j \bmod B_{n-1}$ and $t \equiv s \bmod W_{n-1}$ hold.
We associate one such pair $(j, s)$ to $(i, t)$, and denote $(j, s) = F(i, t)$.
Note that we in particular have $j \in [-5B_n, 4B_n]$ in the above.
Lemma~\ref{lem:NoOverlap}, together with a pigeonhole argument, then implies that there exists $q \in \N$ independent of $n$ such that at most $q$ of the rectangles $R^{(s)}_{(j)}$ for $(j,s) \in [-5B_n,4B_n] \times [-3W_n,W_n]$ are macro-cells of $S_{n+1}$.
Thus, the function $F : K \to [-5B_n, 4B_n] \times [-3W_n, W_n]$ has an image of size at most $q$.
Finally, if $F(i,t) = F(i',t')$ for some $(i,t), (i',t') \in K$, then the bases macro-cells $P^{(t)}_{(i)}$ and $P^{(t')}_{(i')}$ are spatio-temporally consistent, so that $(i,t) = (i',t')$ by assumption.
Thus $F$ is injective, which implies $|K| \geq q$.
\end{proof}

\begin{claim}
\label{claim:Density}
Let $n \in \N$ be such that $n+1 \in N$, let $\ell \in \N$, and denote
\[ L = \{ t \in [\ell W_{n+1}, (\ell+1) W_{n+1}-1] \;|\; \eta^t_0 \neq B_{\varalp_0} \}. \]
Then we have $|L| \leq \frac{48 q W_{n+1}}{n}$.
\end{claim}

\begin{proof}[Proof of claim]
Denote $A = [-4B_n,3B_n] \times [\ell W_{n+1} - 3W_n, (\ell+1) W_{n+1}-1]$, and let $(j,s) \in A$ be such that $P = \eta^{[s,s+W_n-1]}_{[j,j+B_n-1]}$ is a non-blank macro-cell of $S_n$.
Without loss of generality, assume that $j$ is divisible by $B_n$ and $s$ by $W_n$, and let $\xi = \pi_n(\eta)$.
By Lemma~\ref{lem:ChainRigid} we have $\xi \in \tilde \tra_{\varCA_n}$.
Then, denoting $j' = j B_n^{-1}$ and $s' = s W_n^{-1}$, we have $\xi^{s'}_{j'} \neq B_{\varalp_n}$.
By Lemma~\ref{lem:Sparsity}, the set $L'$ of those coordinates $(k',r') \in [-3,2] \times [\ell U'_{n+1}-3, (\ell+1) U'_{n+1}-1]$ for which $\xi^{r'}_{k'}$ is connected to $\xi^{s'}_{j'}$ has size at most $\frac{15}{n}(U'_{n+1}+3)$.
Now, if $\eta^{[k, k+W_n-1]}_{[r,r+B_n-1]}$ is a non-blank macro-cell of $S_n$ which is spatio-temporally consistent with $P$, then $k' = k B_n^{-1}$ and $r' = r W_n^{-1}$ are both integers, and hence $(k',r') \in L'$.
Thus the number of such coordinates $(k,r)$ is also at most $\frac{15}{n}(U'_{n+1}+3)$.

Denote by $H$ the set of those $(j,s) \in A$ for which $\eta^{[s,s+W_n-1]}_{[j,j+B_n-1]}$ is a non-blank macro-cell of $S_n$.
By Claim~\ref{claim:Pigeonhole}, there are at most $q$ classes of spatio-temporally consistent macro-cells in $A$, so the cardinality of $H$ is at most $\frac{15q}{n}(U'_{n+1}+3)$.

Let now $t \in L$.
By Corollary~\ref{cor:Parents}, there exists $(j,s) \in [-4B_n,3B_n] \times [t-3W_n,t]$ such that $\eta^{[s,s+W_n-1]}_{[j,j+B_n-1]}$ is a non-blank macro-cell of $S_n$, that is, $(j,s) \in H$.
Hence $L \subset \{ t \in [s, s+3W_n] \;|\; (j,s) \in H \}$, implying that
\[ |L| \leq 3W_n |H| \leq \frac{45 q W_n}{n}(U'_{n+1}+3) \leq \frac{48 q W_{n+1}}{n}, \]
since $W_n \leq W_{n+1}$.
\end{proof}

Since the effective set $N$ contains arbitrarily large numbers, the above claim directly implies that
\[ \limsup_{s \rightarrow \infty} \frac{1}{n} \left| \left\{ t \in [0,s-1] \;\middle|\; \eta^t_0 \neq B_{\varalp_0} \right\} \right| = 0, \]
and Corollary~\ref{cor:UEChara} then shows that $\varCA_0$ is uniquely ergodic.
\end{proof}

\section{Further Results}
\label{sec:Further}

In this section, we explore some extensions and variants of Theorem~\ref{thm:MainResult} that can be proved using an amplifier construction.
We also state some undecidability results regarding them.

\subsection{Asymptotic Nilpotency in Density}

The product topology, induced by the Cantor metric, is usually seen as the natural environment for cellular automata.
However, since it places a heavy emphasis on the central coordinates of a configuration, more `global' topologies have been defined, one of the most well-known being the Besicovitch topology.

\begin{definition}
The \emph{Besicovitch pseudometric} $d_B : \alp^\Z \times \alp^\Z \to \R$ is defined by
\[ d_B(x,y) = \limsup_{n \longrightarrow \infty} \frac{1}{2n+1} \left| \left\{ i \in [-n,n] \;\middle|\; x_i \neq y_i \right\} \right| \]
for $x, y \in \alp^\Z$.
The \emph{Besicovitch class} $[x]$ of a configuration $x \in \alp^\Z$ is the set $\{ y \in \alp^\Z \;|\; d_B(x,y) = 0 \}$.
\end{definition}

The function $d_B$ is a pseudometric on $\alp^\Z$, and thus induces a metric on the set $\{ [x] \;|\; x \in \alp^\Z \}$ of Besicovitch classes of configurations.
It is known that this metric space it complete, but not compact \cite{BlFoKu97}.
Instead of comparing the central coordinates, it measures the asymptotic density of the set of differing cells.
The pseudometric was first introduced in \cite{CaFoMaMa97}.

The following result appeared in \cite{To12}, and we repeat the proof here for completeness.

\begin{proposition}
\label{prop:BesNilpotency}
Let $\CA : \alp^\Z \to \alp^\Z$ be a cellular automaton such that for all $x \in \alp^\Z$ there exists $n \in \N$ with $\CA^n(x) \in [\INF B_\alp^\infty]$.
Then $\CA$ is nilpotent.
\end{proposition}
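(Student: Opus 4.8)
The plan is to show that the hypothesis forces the limit set $\Omega_\CA$ to be the singleton $\{\INF B_\alp^\infty\}$, which by Proposition~\ref{prop:Nilpotent} is equivalent to nilpotency. So suppose for contradiction that $\CA$ is not nilpotent, hence there is a configuration $x \in \Omega_\CA$ with $x \neq \INF B_\alp^\infty$; since $x \in \Omega_\CA$, pick a two-directional trajectory $\eta \in \tra_\CA$ with $\eta^0 = x$, so that the whole negative-time history of $x$ is available. Because $x$ has a non-blank cell, after translating we may assume $\eta^0_0 \neq B_\alp$. The first key step is to argue that along a well-chosen backward orbit, the density of non-blank cells does \emph{not} decay to $0$, contradicting the assumption that every forward orbit reaches $[\INF B_\alp^\infty]$.

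The natural mechanism is a \emph{light-cone / finite-speed-of-propagation} argument. With radius $r=1$, the value of $\CA^n(y)_0$ depends only on $y_{[-n,n]}$. I would fix a large $T$ and consider the configuration $y = \eta^{-T}$, which by assumption satisfies $\CA^m(y) \in [\INF B_\alp^\infty]$ for some $m$; note $\CA^T(y) = x$, so in particular $m > T$ is forced only if $x \notin [\INF B_\alp^\infty]$ — but $x$ might itself already be Besicovitch-blank, so this crude version is not enough. The fix is to exploit that $\eta^0_0 \neq B_\alp$ and that this cell, traced backward, came from somewhere: by induction on $t$ there is a sequence of coordinates $i_0 = 0, i_1, i_2, \dots$ with $|i_{t+1}-i_t|\le 1$ and $\eta^{-t}_{i_t} \neq B_\alp$ (a non-blank cell cannot appear from an all-blank neighbourhood, since $B_\alp$ is quiescent). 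Thus every $\eta^{-t}$ has a non-blank cell within distance $t$ of the origin; after shifting, each $\eta^{-t}$ has a non-blank cell at the origin. Now apply the hypothesis to each $y_t := \eta^{-t}$ (suitably shifted): there is $m_t$ with $\CA^{m_t}(y_t) \in [\INF B_\alp^\infty]$, and $\CA^t(y_t) = \eta^0$ (up to the shift), so $\CA^t(y_t)$ has a non-blank cell at the origin; hence $m_t > t$, meaning $\CA^j(y_t)$ has a positive density of non-blank cells for all $j < m_t$, in particular for all $j \le t$.

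That alone is still not a contradiction, so the real heart of the argument — and the step I expect to be the main obstacle — is turning "each backward iterate has a non-blank cell near the origin, and stays Besicovitch-non-blank for a long time" into "some single configuration has density of non-blank cells bounded below by a fixed positive constant, forever." I would do this by a compactness/diagonalization move: the space $\alp^\Z$ is compact in the Cantor metric, so the sequence $(\eta^{-t})_t$ (shifted so $\eta^{-t}_0 \neq B_\alp$) has a convergent subsequence with limit $z$, and $z_0 \neq B_\alp$; moreover $z$ lies in $\Omega_\CA$ again. The point is that $z$ inherits a \emph{uniform} lower bound on the density of non-blank cells: for each $k$, infinitely many $\eta^{-t}$ satisfy "$\CA^j$ applied to them is not Besicovitch-blank for $j \le k$," and one must check this property passes to the limit and is shift-stable enough to give $d_B(\CA^j(z), \INF B_\alp^\infty) \ge \epsilon$ for all $j$ — contradicting the hypothesis applied to $z$. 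The delicate part is that $d_B$ is only a pseudometric and is \emph{not} continuous for the Cantor topology, so "not Besicovitch-blank" does not obviously pass to Cantor limits; the workaround is to phrase everything in terms of the \emph{combinatorial} statement "in the window $[-N,N]$ at least $\epsilon(2N+1)$ cells are non-blank," which \emph{is} a clopen condition, extract via König's lemma / compactness a single $z$ satisfying all these windowed bounds simultaneously, and then apply the hypothesis to reach the contradiction. Once the contradiction is in hand, $\Omega_\CA = \{\INF B_\alp^\infty\}$ and Proposition~\ref{prop:Nilpotent} gives nilpotency.
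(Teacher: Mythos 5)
There is a genuine gap, and it sits exactly where you predicted the main obstacle would be. The broken step is the inference ``$\CA^t(y_t)$ has a non-blank cell at the origin; hence $m_t > t$'': a configuration with a single non-blank cell (or even infinitely many, at density zero) still lies in $[\INF B_\alp^\infty]$, so nothing prevents $m_t = 0$. Consequently you never establish that any of the backward iterates $\eta^{-t}$ is Besicovitch-non-blank, let alone for a long time. The subsequent compactness/diagonalization step then has no source for the uniform constant $\epsilon$: even granting your windowed clopen reformulation (which correctly handles the non-continuity of $d_B$ in the Cantor topology), the windowed lower bounds you want to pass to the limit are never proved for the approximating configurations. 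What the limit actually gives you is just another point $z \in \Omega_\CA$ with $z_0 \neq B_\alp$ --- exactly the datum you started from --- and such a point is perfectly compatible with the hypothesis, since its whole forward orbit may have density-zero non-blank cells. Tracing a single non-blank cell backwards can never produce a \emph{density} lower bound; some mechanism forcing a bad pattern to occur with positive frequency in one configuration is indispensable, and your argument contains none.

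The paper's proof supplies that mechanism in one stroke and is much shorter: take a single configuration $x$ that is generic for the uniform Bernoulli measure, so every word $w$ occurs in $x$ with frequency $|\alp|^{-|w|}$. The hypothesis gives $n$ with $\CA^n(x) \in [\INF B_\alp^\infty]$. If some word $w \in \alp^{2n+1}$ had $\CA^n(w) \neq B_\alp$, then the occurrences of $w$ in $x$ would force $d_B(\CA^n(x), \INF B_\alp^\infty) \geq |\alp|^{-2n-1} > 0$, a contradiction; hence $\CA^n(w) = B_\alp$ for every $w \in \alp^{2n+1}$, i.e.\ $\CA^n(\alp^\Z) = \{\INF B_\alp^\infty\}$. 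Note that this argues directly (no contradiction with non-nilpotency, no limit set, no Proposition~\ref{prop:Nilpotent}) and yields a uniform halting time $n$ immediately, which is precisely the quantitative ingredient your backward-orbit route cannot generate. If you want to salvage your outline, the fix is essentially to import this idea: from non-nilpotency get, for each $n$, a word $w_n$ of length $2n+1$ with $\CA^n(w_n) \neq B_\alp$, and then build (or take a generic) configuration containing the $w_n$ with positive density --- at which point the compactness machinery becomes unnecessary.
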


\begin{proof}
Let $x \in \alp^\Z$ be a generic point for the uniform Bernoulli distribution on $\alp$ (see \cite{DeGrSi76}).
This means that
\begin{equation}
\label{eq:Generic}
\lim_{n \rightarrow \infty} \frac{1}{2n+1} \left| \left\{ i \in [-n,n] \;\middle|\; x_{[i,i+|w|-1]} = w \right\} \right| = |\alp|^{-|w|}
\end{equation}
holds for all $w \in \alp^*$.
By assumption, $\CA^n(x) \in [\INF B_\alp^\infty]$ holds for some $n \in \N$.
Let $w \in \alp^{2n+1}$ be arbitrary.
If we had $\CA^n(w) \neq B_\alp$, then $d_B(\CA^n(x), \INF B_\alp^\infty) \geq |\alp|^{-2n-1}$ by~\eqref{eq:Generic}, a contradiction.
Thus we have $\CA^n(w) = B_\alp$, and since $w \in \alp^{2n+1}$ was arbitrary, we have $\CA^n(\alp^\Z) = \{ \INF B_\alp^\infty \}$.
\end{proof}

In particular, the above holds if $\CA^n(x)$ actually equals $\INF B_\alp^\infty$.
It is also well known (see for instance \cite{CuJaYu89}) that if for all $\epsilon > 0$ there exists $n \in \N$ such that $d_C(\CA^n(x),\INF B_\alp^\infty) < \epsilon$ for all $x \in \alp^\Z$, then $\Omega_\CA = \{ \INF B_\alp^\infty \}$, and thus $\CA$ is nilpotent by Proposition~\ref{prop:Nilpotent}.
Next, we show that the Besicovitch analogue of this result does not hold, and in fact the automaton $\varCA_0$ from the sparse amplifier is a counterexample.
First, we give a name for the property.

\begin{definition}
Let $\CA : \alp^\Z \to \alp^\Z$ be a cellular automaton.
Suppose that for all $\epsilon > 0$ there exists $n \in \N$ such that $d_B(\CA^n(x),\INF B_\alp^\infty) < \epsilon$ holds for all $x \in \alp^\Z$.
Then we say $\CA$ is \emph{asymptotically nilpotent in density} (AND for short).
\end{definition}

In other words, a cellular automaton is asymptotically nilpotent in density if the asymptotic density of non-blank cells in a configuration converges to $0$ under the action of the CA, and the speed of the convergence is uniform.
The condition of uniform convergence can be dropped, as shown by the following characterization.

\begin{lemma}
\label{lem:AND}
Let $\CA : \alp^\Z \to \alp^\Z$ be a cellular automaton.
Then $\CA$ is asymptotically nilpotent in density if and only if $\Omega_\CA \subset [\INF B_\alp^\infty]$, if and only if
\begin{equation}
\label{eq:LimitDensity}
\lim_{\ell \longrightarrow \infty} \max_{w \in \B_\ell(\Omega_\CA)} \frac{1}{\ell} |\{ i \in [0,\ell-1] \;|\; w_i \neq B_\alp \}| = 0.
\end{equation}
\end{lemma}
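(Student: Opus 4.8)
The plan is to prove the chain of equivalences in Lemma~\ref{lem:AND} by establishing two implications and a straightforward reformulation. First I would observe that \eqref{eq:LimitDensity} is just an unwound version of the statement $\Omega_\CA \subset [\INF B_\alp^\infty]$: since $\Omega_\CA$ is a subshift, every $w \in \B_\ell(\Omega_\CA)$ extends to some $x \in \Omega_\CA$, and conversely the density of non-blank cells of any $x \in \Omega_\CA$ is controlled by the worst-case density over its length-$\ell$ subwords. A short argument using $\sigma$-invariance of $\Omega_\CA$ and of $d_B$ turns the $\limsup$ in the definition of $d_B$ into the (monotone) limit in \eqref{eq:LimitDensity}; I would spell out that $d_B(x,\INF B_\alp^\infty) \le \lim_\ell \max_{w\in\B_\ell(\Omega_\CA)} \ell^{-1}|\{i : w_i \neq B_\alp\}|$ for every $x \in \Omega_\CA$, and conversely that this limit is a supremum of quantities each realized (up to $\epsilon$) by some $x \in \Omega_\CA$.

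Next, for the implication ``AND $\Rightarrow \Omega_\CA \subset [\INF B_\alp^\infty]$'': let $x \in \Omega_\CA$, so $x = \CA^n(y)$ for some $y \in \alp^\Z$ and every $n$ (using the characterization $\Omega_\CA = \bigcap_n \CA^n(\alp^\Z)$). Fix $\epsilon > 0$; by the AND hypothesis there is $N$ with $d_B(\CA^N(z),\INF B_\alp^\infty) < \epsilon$ for all $z$, in particular for a preimage $z$ with $\CA^N(z) = x$. Hence $d_B(x,\INF B_\alp^\infty) < \epsilon$, and since $\epsilon$ was arbitrary, $x \in [\INF B_\alp^\infty]$.

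For the converse, ``$\Omega_\CA \subset [\INF B_\alp^\infty] \Rightarrow$ AND'', I would argue by compactness. Suppose AND fails: there is $\epsilon > 0$ and a sequence $x^{(n)} \in \alp^\Z$ with $d_B(\CA^n(x^{(n)}),\INF B_\alp^\infty) \ge \epsilon$ for all $n$. Since $d_B$ controls the worst-case density over long windows, for each $n$ there is a window length $\ell_n$ and a position $i_n$ with $\ell_n^{-1}|\{i \in [i_n,i_n+\ell_n-1] : \CA^n(x^{(n)})_i \neq B_\alp\}| \ge \epsilon/2$ (say), and we may take $\ell_n \to \infty$. Shift so the bad window is centered at the origin, extract a convergent subsequence of the shifted configurations $\CA^n(x^{(n)})$, and note the limit point $x$ lies in $\Omega_\CA$ (it is a limit of points in $\CA^n(\alp^\Z)$ with $n \to \infty$, and $\Omega_\CA$ is closed). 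By the window condition and Fatou-type reasoning, $x$ has density of non-blank cells at least $\epsilon/2$ over arbitrarily long central windows, so $d_B(x,\INF B_\alp^\infty) \ge \epsilon/2 > 0$, contradicting $\Omega_\CA \subset [\INF B_\alp^\infty]$.

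The main obstacle I anticipate is the passage to the limit in the last step: $d_B$ is only a pseudometric built from a $\limsup$, so ``bad density over a long window'' does not automatically survive taking a limit of configurations — one has to choose the windows so that their lengths diverge and then argue that the density lower bound transfers to the central windows of the limit. Getting the quantifiers and the choice of window lengths $\ell_n$ right (and ensuring the limit configuration genuinely records a positive $\limsup$-density, not just positive density on finitely many scales) is the delicate part; I would handle it by a diagonal argument, passing to a subsequence along which $\ell_n \to \infty$ and the non-blank density on $[-\ell_n/2,\ell_n/2]$ stays above $\epsilon/2$, so that in the limit every tail of central windows still has density $\ge \epsilon/2$. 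The rest of the proof is routine unwinding of definitions.
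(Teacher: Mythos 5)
There is a genuine gap, and it sits exactly where the real content of the lemma lies: the uniformity. Your implication AND $\Rightarrow \Omega_\CA \subset [\INF B_\alp^\infty]$ is fine, but both of the other steps gloss over the same problem. In the compactness argument for $\Omega_\CA \subset [\INF B_\alp^\infty] \Rightarrow$ AND, you shift $\CA^n(x^{(n)})$ so that a window of length $\ell_n$ with non-blank density $\geq \epsilon/2$ is centered at the origin and then pass to a limit point $x$. But the limit only agrees with the $n$-th configuration on a central window of some radius $M_n$ coming from the convergence, and you have no control of $M_n$ relative to $\ell_n$: all the non-blank cells of the bad window may lie at distance between $M_n$ and $\ell_n/2$ from the origin, in which case they never appear in $x$ at all. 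Knowing the density on $[-\ell_n/2,\ell_n/2]$ is $\geq \epsilon/2$ says nothing about the density on the much shorter central windows that survive to the limit, so the conclusion ``in the limit every tail of central windows still has density $\geq \epsilon/2$'' does not follow, and your proposed diagonal extraction does not repair this, since it never forces density bounds at the scales $M_n \ll \ell_n$. The same conflation appears in your ``straightforward reformulation'' of $\Omega_\CA \subset [\INF B_\alp^\infty] \Leftrightarrow$ \eqref{eq:LimitDensity}: a word $w \in \B_\ell(\Omega_\CA)$ of high non-blank density occurs in some $x \in \Omega_\CA$, but one finite window contributes nothing to the $\limsup$ defining $d_B(x,\INF B_\alp^\infty)$, so such a word does not witness that $d_B(x,\INF B_\alp^\infty)$ is large; this direction is precisely the non-trivial uniformization step, not an unwinding of definitions.

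For comparison, the paper obtains the uniformity from two tools you do not use: the classical fact that for each $\ell$ there is $n$ with $\B_\ell(\CA^n(\alp^\Z)) = \B_\ell(\Omega_\CA)$, which turns \eqref{eq:LimitDensity} into the AND property, and Proposition~\ref{prop:UEChara} applied to the shift map on $\Omega_\CA$, whose uniform-convergence clause is exactly the implication $\Omega_\CA \subset [\INF B_\alp^\infty] \Rightarrow$ \eqref{eq:LimitDensity}. Your compactness route could in principle be salvaged, but it needs an extra localization step before taking limits: from a window of length $\ell_n$ with density $\geq \epsilon/2$ one must first extract (e.g.\ by repeated bisection, always keeping a half of density $\geq \epsilon/2$) a single cell around which \emph{every} centered sub-window of length between a constant and $\ell_n$ has density bounded below by a fixed fraction of $\epsilon$; only after recentering at such a cell does the density bound transfer to the central windows of the limit configuration. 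As written, without this multi-scale localization (or an invariant-measure/Birkhoff argument in its place), the proof does not go through.
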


\begin{proof}
First, if $\CA$ is AND, it is clear that $\Omega_\CA \subset [\INF B_\alp^\infty]$ holds.
Suppose then that \eqref{eq:LimitDensity} holds, and let $\epsilon > 0$.
Then there exists a length $\ell \in \N$ such that $|\{ i \in [0,\ell-1] \;|\; w_i \neq B_\alp \}| < \epsilon \cdot \ell$ holds for all $w \in \B_\ell(\Omega_\CA)$.
It is known (see again \cite{CuJaYu89}) that there now exists $n \in \N$ such that $\B_\ell(\CA^n(\alp^\Z)) = \B_\ell(\Omega_\CA)$.
But this implies that $d_B(\CA^n(x), \INF B_\alp^\infty) < \epsilon$ for all $x \in \alp^\Z$, and since $\epsilon$ was arbitrary, $\CA$ is AND.

We still need to show that $\Omega_\CA \subset [\INF B_\alp^\infty]$ implies \eqref{eq:LimitDensity}, and for that, consider the shift map $\sigma : \Omega_\CA \to \Omega_\CA$, which is a cellular automaton on $\Omega_\CA$.
Using the notation of Section~\ref{sec:Prelim}, we have $d_\sigma(B_\alp,x) = d_{\sigma^{-1}}(B_\alp,x) = 1$ for all $x \in \Omega_\CA$, and Proposition~\ref{prop:UEChara} implies that the convergence of the limits is uniform in $x$.
But this is equivalent to \eqref{eq:LimitDensity}, and we are done.
\end{proof}

We only sketch the proof of the following result, since it is mostly analogous to that of Theorem~\ref{thm:MainResult}.

\begin{proposition}
\label{prop:DensityConvergence}
The automaton $\varCA_0$ is asymptotically nilpotent in density if and only if the effective set $N$ is infinite.
\end{proposition}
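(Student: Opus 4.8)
The plan is to imitate the proof of Theorem~\ref{thm:MainResult} almost verbatim, replacing its temporal density estimate by a spatial one and Corollary~\ref{cor:UEChara} by Lemma~\ref{lem:AND}; in this way asymptotic nilpotency in density of $\varCA_0$ is reduced to deciding when $\Omega_{\varCA_0} \subset [\INF B_{\varalp_0}^\infty]$ holds. For the ``only if'' direction one mimics the finite-$N$ argument of Theorem~\ref{thm:MainResult}: when $N$ is finite, pick $m$ with $n\notin N$ for all $n\ge m$, so that $\varCA_m=\CA_m$ is a universal simulator; it carries an eternal all-live trajectory, and lifting that trajectory through the composed simulation $S_m$ (after a time translation placing a row of macro-cell bases at level $0$) produces a configuration of $\Omega_{\varCA_0}$ with a non-blank cell every $B_m$ positions, hence at $d_B$-distance at least $B_m^{-1}$ from $\INF B_{\varalp_0}^\infty$. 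Thus $\Omega_{\varCA_0}\not\subset[\INF B_{\varalp_0}^\infty]$ and $\varCA_0$ is not asymptotically nilpotent in density by Lemma~\ref{lem:AND}.

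For the ``if'' direction, assume $N$ is infinite. By Lemma~\ref{lem:AND} and time-shift invariance of $\tilde\tra_{\varCA_0}$, it suffices to prove, for infinitely many levels $n$ (exactly the levels handled in Claim~\ref{claim:Density}) and uniformly over $\eta\in\tilde\tra_{\varCA_0}$ and $\ell\in\N$, a bound $O(q\,B_{n+1}/n)$ on the number of positions $i\in[\ell B_{n+1},(\ell+1)B_{n+1}-1]$ with $\eta^0_i\neq B_{\varalp_0}$, where $q$ is the constant of Claim~\ref{claim:Pigeonhole}. Since every word of $\B_\ell(\Omega_{\varCA_0})$ occurs in row $0$ of some trajectory in $\tilde\tra_{\varCA_0}$, summing this bound over the $O(\ell/B_{n+1})$ windows that such a word meets yields \eqref{eq:LimitDensity}, and Lemma~\ref{lem:AND} then gives the conclusion. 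The window bound itself is the spatial counterpart of Claim~\ref{claim:Density}, obtained by running the proofs of Claims~\ref{claim:Pigeonhole} and~\ref{claim:Density} with the horizontal and vertical macro-cell axes interchanged.

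Concretely, for each non-blank $\eta^0_i$ in the window, Corollary~\ref{cor:Parents} supplies a non-blank macro-cell of $S_n$ inside $[i-4B_n,i+3B_n]\times[-3W_n,0]$; the set $H$ of the (grid) positions of these macro-cells lies in a box of width $O(B_{n+1})$ and height $O(W_n)$, and under $\pi_n$ its members become non-blank $\varCA_n$-cells of the rigid trajectory $\xi=\pi_n(\eta)\in\tilde\tra_{\varCA_n}$ (Lemma~\ref{lem:ChainRigid}), spread over $O(1)$ rows and a horizontal window of length $O(Q'_n)$. A pigeonhole argument of exactly the shape of Claim~\ref{claim:Pigeonhole} — relying on Lemma~\ref{lem:NoOverlap} and Lemma~\ref{lem:Parents}, which are insensitive to the aspect ratio of the bounding box — shows these cells fall into at most $q$ spatio-temporal consistency classes, while the spatial sparsity of $\xi$ (the horizontal reading of the divisibility fact stated before Lemma~\ref{lem:Sparsity}: weakly connected cells of equal kind and counter value have both coordinates divisible by $M_n$) bounds each class by $O(Q'_n/n)$ cells, so $|H|=O(q\,Q'_n/n)$. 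As each macro-cell counted by $H$ accounts for $O(B_n)$ values of $i$ and $B_{n+1}=B_n Q'_n$, the window contains $O(B_n)\cdot|H|=O(q\,B_{n+1}/n)$ non-blank cells, as required.

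I expect the main obstacle to be the spatial sparsity estimate, i.e., the horizontal analogue of Lemma~\ref{lem:Sparsity}. In the temporal case one uses that directed paths in the graph $G$ advance strictly in time, so that connected cells of equal kind in a common column share a counter value and are $M_n$-separated; horizontally a directed $G$-path can never stay inside a single row, so one must argue through undirected connectivity and track how counter values transform across the copying steps of the sparsification, reducing to the symmetric divisibility already recorded in the observation before Lemma~\ref{lem:Sparsity}. The remaining work is index bookkeeping caused by the asymmetry $W_n\gg B_n$ — a length-$B_{n+1}$ horizontal window at level $0$ corresponds to a length-$O(Q'_n)$ window only $O(1)$ rows tall at level $n$, rather than the $O(U'_{n+1})$-tall strip that appears in the temporal argument, so the pigeonhole box must be reshaped — but Lemma~\ref{lem:NoOverlap}, Lemma~\ref{lem:Parents}, and Corollary~\ref{cor:Parents} carry over without change.
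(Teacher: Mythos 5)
Your proposal follows essentially the same route as the paper's (sketched) proof: for finite $N$ you lift an everywhere-live trajectory of the non-sparsified level to get rows of non-blank density at least $B_m^{-1}$, and for infinite $N$ you invoke Lemma~\ref{lem:AND} and rerun the argument of Theorem~\ref{thm:MainResult} with the horizontal and vertical roles exchanged — a row version of Lemma~\ref{lem:Sparsity}, a reshaped Claim~\ref{claim:Pigeonhole} (via Lemma~\ref{lem:NoOverlap}, Lemma~\ref{lem:Parents} and Corollary~\ref{cor:Parents}), and a horizontal-window Claim~\ref{claim:Density} — which is exactly what the paper does. The only loose point is the assertion that the lifted trajectory yields a configuration of $\Omega_{\varCA_0}$: the simulation only guarantees one-directional lifts, so one should either pass to a Cantor-limit point of the base rows (every length-$B_m$ window containing a non-blank cell is a closed condition) or negate the definition of AND directly by choosing, for each $n$, a suitably time-shifted initial row; this is a minor patch, and the paper glosses over the same point.
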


\begin{proof}[Proof sketch]
If $N$ is finite, then $\varCA_0$ is not AND, for the same reason as in Theorem~\ref{thm:MainResult}.

Suppose now that $N$ is infinite.
By Lemma~\ref{lem:AND}, it suffices to prove that $\Omega_{\varCA_0} \subset [\INF B_{\varalp_0}^\infty]$, so let $x \in \Omega_{\varCA_0}$ be arbitrary.
Then there exists a two-directional trajectory $\eta \in \tra_{\varCA_0}$ with $\eta^0 = x$.
It is easy to see that the variant of Lemma~\ref{lem:Sparsity}, where $\eta^s_0$ is replaced by $\eta^0_s$, holds.
We can also prove the variant of Claim~\ref{claim:Pigeonhole} in the proof of Theorem~\ref{thm:MainResult} where $K \subset [-B_n, B_n] \times [-3W_{n-1}, W_{n-1} - 1]$.
Finally, we can prove a variant of Claim~\ref{claim:Density} where
\[ L = \{ i \in [\ell B_{n+1}, (\ell+1) B_{n+1}-1] \;|\; \eta^0_i \neq B_{\varalp_0} \}, \]
and the bound is modified accordingly.
This shows that $x = \eta^0 \in [\INF B_{\varalp_0}^\infty]$, so that we have $\Omega_{\varCA_0} \subset [\INF B_{\varalp_0}^\infty]$.
\end{proof}

The main result of \cite{GuRi08} states that if a cellular automaton $\CA : \alp^\Z \to \alp^\Z$ satisfies $\CA^n(x) \stackrel{n \rightarrow \infty}{\longrightarrow} \INF B_\alp^\infty$ for all $x \in \alp^\Z$ with respect to the Cantor metric $d_C$, then it is nilpotent.
The above proposition in particular shows that the $d_B$-analogue of this result does not hold.
In the same article, it was asked whether the condition that the $d_C$-orbit closure $\overline{\{\CA^n(x) \;|\; n \in \N\}}$ or every $x \in \alp^\Z$ contains the uniform configuration $\INF B_\alp^\infty$ implies nilpotency, and Theorem~\ref{thm:MainResult} clearly disproves this.
Finally, \cite[Corollary 31]{GuRi10} states that if the limit set $\Omega_\CA$ contains only $B_\alp$-finite configurations, then $\CA$ is nilpotent.
Lemma~\ref{lem:AND} and Proposition~\ref{prop:DensityConvergence} show that this result is strict, in the sense that $\Omega_\CA$ containing only configurations where the asymptotic density of blank symbols is $1$ is not enough to guarantee nilpotency.

Next, we show that Proposition~\ref{prop:DensityConvergence} is `only' an artifact of our construction, since the notions of unique ergodicity and AND are independent.
For this, we make two modifications to the sparsification transformation $G_s$.
More explicitly, we define two new transformations $G_1$ and $G_2$ that add some new fields and rules to their input program, and replace $G_s$ with either $G_1 \circ G_s$ or $G_2 \circ G_s$.
When defining $G_1$ and $G_2$, we thus assume that the input program is already sparsified by $G_s$, and thus contains a numeric parameter $\mtt{N}$ and a numeric field $\cou$.
These transformations rename the fields, parameters and procedures of the input program if necessary, except for those defined by $G_s$, which they use during their own execution.

First, the application of $G_1$ to $n \in \N$ and a sparsified program $p$ is defined in Algorithm~\ref{alg:NotUE}.
The transformation $G_1$ adds one new counter, $\ccou$, ranging from $0$ to $2n$, that is initialized to $1$ at each base cell (a right-moving cell with $\cou = 0$).
The counter field is then incremented by one for $2n$ steps, after which it remains at $0$, until the cell becomes a base cell again.
The new counter operates on a different `layer' as the sparsified automaton, and does not affect its dynamics.
Geometrically, $G_1$ adds to all trajectories of $G_s$ vertical lines that connect each base cell to the one above it, as shown in Figure~\ref{fig:SparsifiedVarXOR}.

\begin{algorithm}[htp]
\caption{The transformed program $G_1(n, p)$.}\label{alg:NotUE}
\begin{algorithmic}[0]
\State $\mbf{num\    field}\ \ccou \leq 2 \, \mtt{N}$
\State $p$ \Comment{Definitions and body of $p$}
\If{$\kind = {\Rightarrow} \wedge \cou = 0$}
	\State $\ccou \gets 1$ \Comment{The base of a macro-cell stays non-blank\ldots}
\ElsIf{$\ccou > 0$}
	\State $\ccou \gets \ccou + 1 \bmod 2 \, \mtt{N}$ \Comment{\ldots{}for exactly $2 \, \mtt{N}$ steps}
\Else
	\State $\ccou \gets 0$
\EndIf

\end{algorithmic}
\end{algorithm}

In the second transformation, $G_2$, we wish to introduce horizontal lines instead of vertical ones.
This is a little more complex, and to define $G_2$, we need the following auxiliary concepts.

\begin{definition}
We inductively define a sequence $(L(n))_{n \geq 1}$ of finite sets $L(n) \subset \Z^2$ as follows.
First, let $L(1) = \{(0,0)\}$ and $L(2) = \{(0,0), (-1,1), (0,1), (1,1)\}$.
For $n \geq 3$, we denote $n^* = \left\lceil \frac{n+1}{2} \right\rceil$ and $n_* = \left\lfloor \frac{n+1}{2} \right\rfloor$, and define
\[ L(n) = \{ (k, \pm k) \;|\; 0 \leq k \leq n^* \} \cup L(n^*) + (n_*, -n_*) \cup L(n_*)  + (n^*, n^*). \]
For each $n \geq 1$, define also a configuration $\eta_n \in (L(n) \cup \{ \# \})^{\Z^2}$ by
\[ (\eta_n)^t_i = \left\{ \begin{array}{ll}
    (i, t), & \mbox{if~} (i, t) \in L(n), \\
    \#, & \mbox{otherwise,}
\end{array} \right. \]
and a function $f_n : (L(n) \cup \{\#\})^3 \to L(n) \cup \{\#\}$ by
\[ f_n(\vec u, \vec v, \vec w) = \left\{ \begin{array}{ll}
    (i, t) \in L(n) \setminus \{ \vec 0 \}, & \mbox{if~} (\vec u, \vec v, \vec w) = g_n(\eta_n)^{t-1}_{[i-1,i+1]}, \\
    \#, & \mbox{otherwise.}
\end{array} \right. \]
\end{definition}

The sets $L(n)$ and the functions $f_n$ satisfy the following properties, which are not hard to prove:
\begin{enumerate}
\item $[-n+1, n-1] \times \{n-1\} \subset L(n) \subset [-n+1, n-1] \times [0, n-1]$,
\item the cardinality of the intersection of $L(n)$ with any vertical column is logarithmic in $n$,
\item the sets $L(n)$ and functions $f_n$ are computable in polynomial time uniformly in $n$, and
\item the restriction $g_n(\eta_n)^{[0, \infty)}$ to the upper half-plane is a one-directional trajectory of the cellular automaton with local function $f_n$.
\end{enumerate}
See Figure~\ref{fig:LnPic} for a visualization of $L(n)$ for different values of $n$.

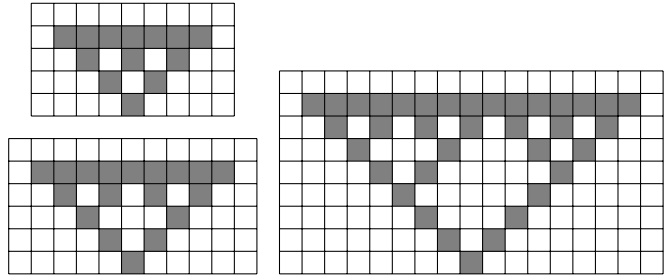
\begin{figure}[htp]
\begin{center}
\begin{tikzpicture}[xscale=0.3,yscale=0.3]

\foreach \x/\y in {1/4,2/3,2/4,2/10,3/2,3/4,3/9,3/10,4/1,4/3,4/4,4/8,4/10,5/0,5/4,5/7,5/9,5/10,6/1,6/3,6/4,6/8,6/10,7/2,7/4,7/9,7/10,8/3,8/4,8/10,9/4,13/7,14/6,14/7,15/5,15/7,16/4,16/6,16/7,17/3,17/7,18/2,18/4,18/6,18/7,19/1,19/5,19/7,20/0,20/6,20/7,21/1,21/7,22/2,22/6,22/7,23/3,23/5,23/7,24/4,24/6,24/7,25/5,25/7,26/6,26/7,27/7}{
\begin{scope}[shift={(\x,\y)}]
\fill[gray] (0,0) rectangle (1,1);

\end{scope}
}
\foreach \x/\y in {0/0,0/1,0/2,0/3,0/4,0/5,1/0,1/1,1/2,1/3,1/5,1/7,1/8,1/9,1/10,1/11,2/0,2/1,2/2,2/5,2/7,2/8,2/9,2/11,3/0,3/1,3/3,3/5,3/7,3/8,3/11,4/0,4/2,4/5,4/7,4/9,4/11,5/1,5/2,5/3,5/5,5/8,5/11,6/0,6/2,6/5,6/7,6/9,6/11,7/0,7/1,7/3,7/5,7/7,7/8,7/11,8/0,8/1,8/2,8/5,8/7,8/8,8/9,8/11,9/0,9/1,9/2,9/3,9/5,9/7,9/8,9/9,9/10,9/11,10/0,10/1,10/2,10/3,10/4,10/5,12/0,12/1,12/2,12/3,12/4,12/5,12/6,12/7,12/8,13/0,13/1,13/2,13/3,13/4,13/5,13/6,13/8,14/0,14/1,14/2,14/3,14/4,14/5,14/8,15/0,15/1,15/2,15/3,15/4,15/6,15/8,16/0,16/1,16/2,16/3,16/5,16/8,17/0,17/1,17/2,17/4,17/5,17/6,17/8,18/0,18/1,18/3,18/5,18/8,19/0,19/2,19/3,19/4,19/6,19/8,20/1,20/2,20/3,20/4,20/5,20/8,21/0,21/2,21/3,21/4,21/5,21/6,21/8,22/0,22/1,22/3,22/4,22/5,22/8,23/0,23/1,23/2,23/4,23/6,23/8,24/0,24/1,24/2,24/3,24/5,24/8,25/0,25/1,25/2,25/3,25/4,25/6,25/8,26/0,26/1,26/2,26/3,26/4,26/5,26/8,27/0,27/1,27/2,27/3,27/4,27/5,27/6,27/8,28/0,28/1,28/2,28/3,28/4,28/5,28/6,28/7,28/8}{
\begin{scope}[shift={(\x,\y)}]

\end{scope}
}
\foreach \x/\y in {0/6,0/7,0/8,0/9,0/10,0/11,1/6,2/6,3/6,4/6,5/6,6/6,7/6,8/6,9/6,10/6,10/7,10/8,10/9,10/10,10/11,11/0,11/1,11/2,11/3,11/4,11/5,11/6,11/7,11/8,11/9,11/10,11/11,12/9,12/10,12/11,13/9,13/10,13/11,14/9,14/10,14/11,15/9,15/10,15/11,16/9,16/10,16/11,17/9,17/10,17/11,18/9,18/10,18/11,19/9,19/10,19/11,20/9,20/10,20/11,21/9,21/10,21/11,22/9,22/10,22/11,23/9,23/10,23/11,24/9,24/10,24/11,25/9,25/10,25/11,26/9,26/10,26/11,27/9,27/10,27/11,28/9,28/10,28/11}{
\begin{scope}[shift={(\x,\y)}]

\end{scope}
}
\draw (0,0) grid (11,6);\draw (1,7) grid (10,12);\draw (12,0) grid (29,9);
\end{tikzpicture}
\end{center}
\caption{The sets $L(4)$ (top left), $L(5)$ (bottom left) and $L(7)$ (right). Note how the two smaller sets are used to construct $L(7)$.}
\label{fig:LnPic}
\end{figure}

The application of the second transformation $G_2$ to $n \in \N$ and $p$ is defined in Algorithm~\ref{alg:NotAND}.
Geometrically, the idea is to draw on every macro-cell of the level-$n$ sparse simulation a copy of the pattern $L(n)$, rooted at the base cell.
This is achieved by the new field $\celem$ with values in $L(\mtt{N}) \cup \{\#\}$, of which $\#$ is presented by zeros, and the functions $f_n$.
The field $\celem$ of a base cell gets the value $(0,0) \in L(\mtt{N})$, and on subsequent steps, the new value of $\celem$ is computed using $f_{\mtt{N}}$.
See Figure~\ref{fig:SparsifiedVarXOR} for a visualization.

\begin{algorithm}[htp]
\caption{The transformed program $G_2(n, p)$.}\label{alg:NotAND}
\begin{algorithmic}[1]

\State $\mbf{enum\ field}\ \celem \in L(\mtt{N}) \cup \{\#\}$
\State $p$ \Comment{Definitions and body of $p$}
\If{$\kind = {\Rightarrow} \wedge \cou = 0$}
	\State $\celem \gets (0,0)$ \Comment{From the base of a macro-cell\ldots}
\EndIf
\State $\celem \gets f_{\mtt{N}}(\celem^-, \celem, \celem^+)$ \Comment{\ldots{}construct the set $L(\mtt{N})$}

\end{algorithmic}
\end{algorithm}

Now, let $N \subset \N$ be decidable in polynomial time, and consider the transformations
\[ G_i^N(n, p) = \left\{ \begin{array}{cl}
	G_s(n, p), & \mbox{if~} n \in N, \\
	(G_i \circ G_s)(n, p), & \mbox{if~} n \notin N
\end{array} \right. \]
for $i \in \{1,2\}$.
Let the cellular automata $\vvarCA_i : \vvaralp_i^\Z \to \vvaralp_i^\Z$ be the bottom automata of the amplifier sequences obtained by applying Theorem~\ref{thm:Amplification} to $G_i^N$.

\begin{proposition}
\label{prop:Independent}
The cellular automaton $\vvarCA_1$ ($\vvarCA_2$) is AND (uniquely ergodic), and it is uniquely ergodic (AND, respectively) if and only if the effective set $N$ is infinite.
In particular, the notions of unique ergodicity and AND are independent.
\end{proposition}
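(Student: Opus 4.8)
The plan is to deduce all four assertions by adapting the proofs of Theorem~\ref{thm:MainResult} and Proposition~\ref{prop:DensityConvergence}, using the single structural observation that the extra fields $\ccou$ and $\celem$ introduced by $G_1$ and $G_2$ are \emph{inert}: they never influence $\live$, $\kind$ or $\cou$. Consequently every trajectory of a decorated sparsification projects onto a trajectory of the plain one, a cell that is non-blank only by virtue of the decoration is $\delta$-demanding towards a non-blank predecessor along the decoration pattern (and hence is still connected to a sparse base cell), and Lemma~\ref{lem:SparseRigidity}, Lemma~\ref{lem:SparseConnect}, Lemma~\ref{lem:SparseParents}, Lemma~\ref{lem:SparseEvenCells}, Lemma~\ref{lem:ChainRigid}, Lemma~\ref{lem:NoOverlap}, Lemma~\ref{lem:Parents} and Claim~\ref{claim:Pigeonhole} all carry over verbatim; only Lemma~\ref{lem:Sparsity} is disturbed, and then only at the decorated levels. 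I would also use that $G_i^N$ applies $G_s$ at \emph{every} level, so that, in contrast to $G_s^N$, the sparse layers $S^s_n$ are present for all $n$ whatever $N$ is; the only role of $N$ is to prescribe at which levels the $\ccou$-lines (for $\vvarCA_1$) or the $L(n)$-patterns (for $\vvarCA_2$) get superimposed. Throughout I keep the notation $S_n$, $S^s_n$, $S^u_n$, $M_n$, $B_n$, $W_n$ of Section~\ref{sec:SparseAmp}, with $\vvarCA_i$ playing the role of $\varCA_0$.

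First I would show that $\vvarCA_1$ is always AND and that $\vvarCA_2$ is always uniquely ergodic. For the former, by Lemma~\ref{lem:AND} it is enough to prove $\Omega_{\vvarCA_1}\subseteq[\INF B_{\vvaralp_1}^\infty]$, and I would run the \emph{spatial} version of the proof of Theorem~\ref{thm:MainResult} (the one sketched for Proposition~\ref{prop:DensityConvergence}): the $\ccou$-decoration adds only \emph{vertical} edges to the graph $G$ of a trajectory, so it neither enlarges the horizontal sets appearing in the spatial analogue of Lemma~\ref{lem:Sparsity} nor creates new horizontal overlaps, whence the spatial analogues of Claim~\ref{claim:Pigeonhole} and Claim~\ref{claim:Density} carry over. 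For $\vvarCA_2$, by Corollary~\ref{cor:UEChara} it is enough to re-run the original, temporal argument of Theorem~\ref{thm:MainResult}; the only property of the decoration that is needed is that $L(n)$ meets every vertical column in $O(\log n)$ points, so $\celem$ adds only $O(\log n)$ non-blank cells to any vertical column of a level-$n$ sparse macro-cell, which is absorbed by the factor $\frac{1}{n}$ produced by the genuine sparsification.

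Next I would handle the directions saying that an infinite $N$ makes $\vvarCA_1$ uniquely ergodic and $\vvarCA_2$ AND. Here the proof of Theorem~\ref{thm:MainResult}, respectively its spatial mirror, applies almost without change: the single place where the decoration could interfere is the one invocation of Lemma~\ref{lem:Sparsity} in the proof of Claim~\ref{claim:Density}, which must be made at a level carrying no decoration, i.e.\ at an index of $N$; infinitude of $N$ provides such indices arbitrarily far out, and that is exactly what the limit computation requires. Every other ingredient refers only to the sparse and universal simulation structure, which is untouched.

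The hard part will be the converses: that a finite $N$ forces $\vvarCA_1$ to fail unique ergodicity and $\vvarCA_2$ to fail AND. Put $m=\max N$, so that every level $n>m$ is decorated. For $\vvarCA_1$ I would build a two-directional trajectory $\eta\in\tra_{\vvarCA_1}$ in which, for every $n$, each macro-cell of $S_n$ based in column $0$ is non-blank. This is consistent all the way down the amplifier: for a decorated level $n>m$ the $\ccou$-lines keep the column-$0$ cells of the level-$n$ automaton non-blank at all times (together with the base, left- and right-moving and returning cells that the sparse cycle places there), and a non-blank macro-cell of a universal simulation is ``mostly live'' by Lemma~\ref{lem:MacroCells}, so a coherent such $\eta$ can be produced by a compactness argument. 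Inside $\eta$ the proportion of times $t$ for which $\eta^t_0$ is non-blank is, within each level-$(m+1)$ macro-cell of the stack based in column $0$, bounded below by a fixed positive constant --- a product over the finite set $N$ of factors of order $1/k$ --- and this is precisely where finiteness of $N$ is indispensable, since the corresponding infinite product would vanish. Hence $\eta^0\in\Omega_{\vvarCA_1}$ satisfies $d_{\vvarCA_1}(B_{\vvaralp_1},\eta^0)<1$, and Corollary~\ref{cor:UEChara} shows $\vvarCA_1$ is not uniquely ergodic. For $\vvarCA_2$ the argument is the spatial mirror image, the key fact being that $[-n+1,n-1]\times\{n-1\}\subseteq L(n)$: the nested full top rows of the $L(n)$-patterns force some configuration in $\Omega_{\vvarCA_2}$ to have positive density of non-blank cells \emph{in space}, contradicting Lemma~\ref{lem:AND}. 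Specializing any of these statements to a finite $N$ finally exhibits a cellular automaton that is AND but not uniquely ergodic ($\vvarCA_1$) and one that is uniquely ergodic but not AND ($\vvarCA_2$), which is the claimed independence.
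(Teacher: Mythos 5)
Your proposal is correct and follows essentially the same route as the paper's (sketched) proof: the decorations are inert for the sparse dynamics so the structural lemmas of Section~\ref{sec:SparseProp} survive, the unconditional property holds because $G_i^N$ sparsifies at every level, the infinite-$N$ direction re-runs Theorem~\ref{thm:MainResult} and Proposition~\ref{prop:DensityConvergence} at undecorated levels, and for finite $N$ one exhibits trajectories in which the decoration lines persist at all levels above $\max N$, giving positive column (resp.\ row) density in the limit set. Two harmless overstatements: the $\ccou$-lines do add one connected non-blank cell per row of each decorated macro-cell, so the horizontal analogue of Lemma~\ref{lem:Sparsity} holds only with a slightly weaker bound (as the paper notes), and your product-over-$N$ density estimate is more than needed, since one non-blank base per level-$(m+1)$ macro-cell already gives the lower bound $W_{m+1}^{-1}$.
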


\begin{proof}[Proof sketch]
Consider first the automaton $\vvarCA_1$.
It is easy to see that the proof of Proposition~\ref{prop:DensityConvergence} works with $\varCA_0$ replaced by $\vvarCA_1$, since all the results of Section~\ref{sec:SparseProp} also hold for $G_1$, with Lemma~\ref{lem:Sparsity} replaced by the horizontal version mentioned in the proof of Proposition~\ref{prop:DensityConvergence}, with a slightly weaker bound.
If the effective set $N$ is infinite, the CA is uniquely ergodic, since the proof of Theorem~\ref{thm:MainResult} holds almost as such.
However, if $N \subset [0,n-1]$ for some $n \in \N$, the $n$'th automaton in the amplifier given by Theorem~\ref{thm:Amplification} for $G_1^N$ has a trajectory whose central column contains no blank cells, and then $\vvarCA_1$ is not uniquely ergodic.

The case of $\vvarCA_2$ is similar: All results of Section~\ref{sec:SparseProp} hold for $G_2$, with a weaker bound in Lemma~\ref{lem:Sparsity} caused by the new cells with $\celem$-value different from $\#$, and thus $\vvarCA_2$ is uniquely ergodic by the same proof as $\varCA_0$.
If $N$ is infinite, it is also AND for the same reason as $\varCA_0$, and if $N$ is finite, then some automaton in the amplifier of $G_2^N$ has a trajectory whose central horizontal line contains no blank cells, implying that $\vvarCA_2$ is not AND.
\end{proof}

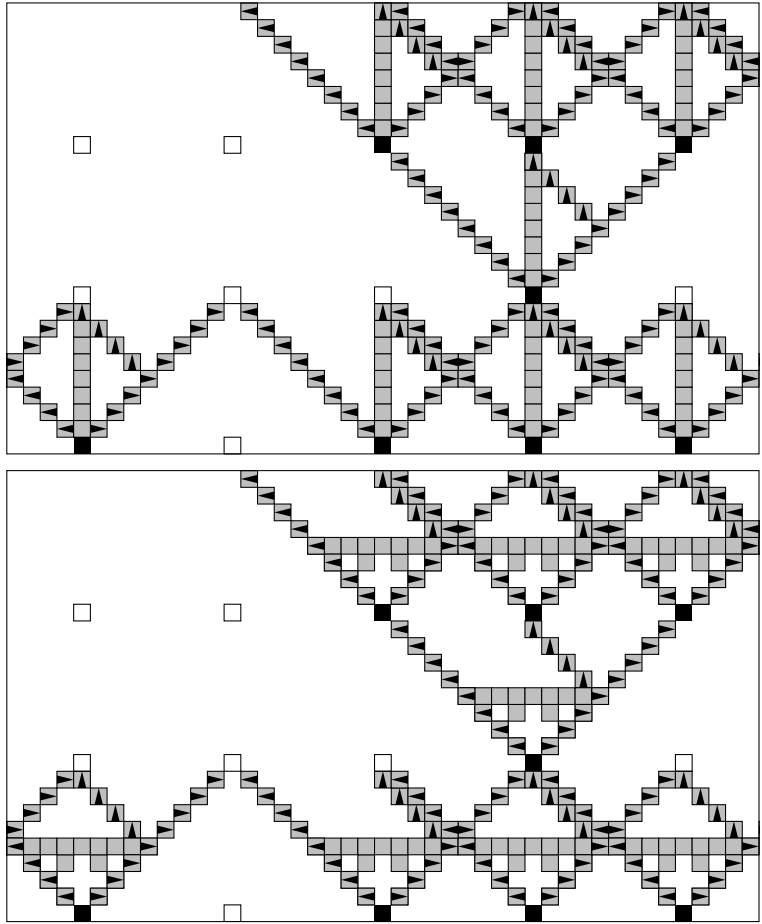
\begin{figure}[htp]
\begin{center}
\begin{tikzpicture}[xscale=2.0,yscale=2.0]

\begin{scope}\clip (1,0) rectangle (6,6+1/9);
\foreach \x/\y in {0/3,1/3,3/3,3/5,4/3,4/4,4/5,5/3,5/5,6/5}{
\begin{scope}[shift={(\x,\y)}]
\begin{scope}[xscale={1/9},yscale={1/9},shift={(0,1)}]
\fill (4,0) rectangle ++(1,1);
\foreach \y in {1,...,8}{
	\filldraw[fill=gray!50] (4+\y,\y) rectangle ++(1,1);
	\fill (5+\y,\y+1/2) --
	      (4+\y,\y+3/4) --
	      (4+\y,\y+1/4) -- cycle;
	\filldraw[fill=gray!50] (4-\y,\y) rectangle ++(1,1);
	\fill (4-\y,\y+1/2) --
	      (5-\y,\y+3/4) --
	      (5-\y,\y+1/4) -- cycle;
	\filldraw[fill=gray!50] (4,\y) rectangle ++(1,1);
}
\foreach \y in {1,...,4}{
	\filldraw[fill=gray!50] (8-\y,4+\y) rectangle ++(1,1);
	\fill (8-\y+1/2,5+\y) --
	      (8-\y+1/4,4+\y) --
	      (8-\y+3/4,4+\y) -- cycle;
}
\end{scope}

\end{scope}
}
\foreach \x/\y in {0/0,1/0,3/0,3/2,4/0,4/1,4/2,5/0,5/2,6/2}{
\begin{scope}[shift={(\x,\y)}]
\begin{scope}[xscale={1/9},yscale={1/9}]
\foreach \y in {1,...,4}{
	\filldraw[fill=gray!50] (8-\y,4+\y) rectangle ++(1,1);
	\fill (8-\y+1/2,5+\y) --
	      (8-\y+1/4,4+\y) --
	      (8-\y+3/4,4+\y) -- cycle;
}
\foreach \x/\y in {-1/1,1/1,-2/2,2/2,-3/3,-1/3,1/3,3/3,-4/4,-3/4,-2/4,-1/4,0/4,1/4,2/4,3/4,4/4}{
	\filldraw[fill=gray!50] (\x+4,\y) rectangle ++(1,1);
}
\fill (4,0) rectangle ++(1,1);
\foreach \y in {1,...,8}{
	\filldraw[fill=gray!50] (4+\y,\y) rectangle ++(1,1);
	\fill (5+\y,\y+1/2) --
	      (4+\y,\y+3/4) --
	      (4+\y,\y+1/4) -- cycle;
	\filldraw[fill=gray!50] (4-\y,\y) rectangle ++(1,1);
	\fill (4-\y,\y+1/2) --
	      (5-\y,\y+3/4) --
	      (5-\y,\y+1/4) -- cycle;
}
\end{scope}

\end{scope}
}
\foreach \x/\y in {0/4,0/5,1/4,1/5,2/3,2/4,2/5,3/4,5/4,6/3,6/4}{
\begin{scope}[shift={(\x,\y)}]
\begin{scope}[xscale={1/9},yscale={1/9},shift={(0,1)}]
\draw (4,0) rectangle ++(1,1);
\end{scope}

\end{scope}
}
\foreach \x/\y in {0/1,0/2,1/1,1/2,2/0,2/1,2/2,3/1,5/1,6/0,6/1}{
\begin{scope}[shift={(\x,\y)}]
\begin{scope}[xscale={1/9},yscale={1/9}]
\draw (4,0) rectangle ++(1,1);
\end{scope}

\end{scope}
}
\end{scope}\draw (1,0) rectangle (6,3);\draw (1,3+1/9) rectangle (6,6+1/9);
\end{tikzpicture}
\end{center}
\caption{The two modifications of $G_s$, applied to the three-neighbor XOR automaton with $n = 4$, with $G_1$ on top and $G_2$ below. The unmarked cells are not live, but have nonzero $\ccou$-values and $\celem$-values, respectively. Compare with Figure~\ref{fig:SparsifiedXOR}.}
\label{fig:SparsifiedVarXOR}
\end{figure}

\subsection{Asymptotic Sparsity in Other Directions}

As unique ergodicity corresponds to every column of every trajectory being asymptotically sparse, the AND property is its analogue for the horizontal direction.
As a generalization of this idea, we sketch the proof of the analogous result for every rational direction.
Recall that $\sigma : \varalp_0^\Z \to \varalp_0^\Z$ is the shift map, and note that the CA $\sigma^i \circ \varCA_0^k$ has a larger radius than $1$, if $i \neq 0$ or $k > 1$.

\begin{proposition}
\label{prop:AllDirections}
Let $i \in \Z$ and $k > 0$.
Then the cellular automaton $\sigma^i \circ \varCA_0^k$ is uniquely ergodic if and only if $N$ is infinite.
\end{proposition}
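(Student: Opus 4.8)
The plan is to mirror the proofs of Theorem~\ref{thm:MainResult} and Proposition~\ref{prop:DensityConvergence}, after first reducing the statement to a sparsity estimate along lines of direction $(i,k)$. Since $\sigma$ and $\varCA_0$ commute we have $(\sigma^i\circ\varCA_0^k)^t=\sigma^{ti}\circ\varCA_0^{tk}$, hence $\Omega_{\sigma^i\circ\varCA_0^k}=\Omega_{\varCA_0}$ (the limit set is unchanged because $\sigma$ is invertible), and for a bi-infinite trajectory $\eta\in\tra_{\varCA_0}$ with $x=\eta^0$ one has $(\sigma^i\circ\varCA_0^k)^t(x)_0=\eta^{tk}_{ti}$. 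By Corollary~\ref{cor:UEChara}, $\sigma^i\circ\varCA_0^k$ is then uniquely ergodic if and only if, for every $\eta\in\tra_{\varCA_0}\subset\tilde\tra_{\varCA_0}$, the set $\{t\ge 0 : \eta^{tk}_{ti}\neq B_{\varalp_0}\}$ has density $0$; that is, the non-blank cells of $\eta$ on the line of direction $(i,k)$ through the origin are asymptotically negligible. The cases $i=0$ and ``$k$ small'' are essentially Theorem~\ref{thm:MainResult} and Proposition~\ref{prop:DensityConvergence}.

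For the ``only if'' direction, suppose $N\subseteq[0,m-1]$, so that $\varCA_j=\CA_j$ is a universal simulator for all $j\ge m$. Exactly as in the proof of Theorem~\ref{thm:MainResult}, there is a trajectory of $\varCA_m$ all of whose cells are non-blank; pulling it back through $\pi_m$ gives $\xi\in\tra^+_{\varCA_0}$ in which every block $\xi^{[tW_m,(t+1)W_m-1]}_{[jB_m,(j+1)B_m-1]}$ lies in $C_m$, whence the base cells $\xi^{tW_m+b_m}_{jB_m+a_m}$ are non-blank for all $j\in\Z$ and $t\ge 0$, where $(a_m,b_m)$ is the base coordinate of $S_m$. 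Taking $x=\sigma^{a_m}(\varCA_0^{b_m}(\xi^0))$ and invoking Proposition~\ref{prop:UEChara} with $X=\varalp_0^\Z$, one gets $(\sigma^i\circ\varCA_0^k)^t(x)_0\neq B_{\varalp_0}$ whenever $W_m\mid tk$ and $B_m\mid ti$; these $t$ form a nonzero arithmetic progression (it contains $0$), so $d_{\sigma^i\circ\varCA_0^k}(B_{\varalp_0},x)<1$ and the automaton is not uniquely ergodic.

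For the ``if'' direction assume $N$ is infinite and fix a large $n\in N$, so that $\varCA_n$ is sparsified, $M_n=2n+1$ is large, and $W_n/B_n\ge 6^n$ exceeds $|i|$ and $k$ by as much as desired. When $i=0$ the line is the central column and Claim~\ref{claim:Density} applies essentially verbatim (up to a harmless factor $k$), giving density $O(q/n)$. When $i\neq 0$ I would argue: by Corollary~\ref{cor:Parents} every non-blank cell of $\eta$ on a window of the line of temporal length $W_{n+1}$ lies within $O(B_n)$ of a non-blank macro-cell of $S_n$; lifting through $\pi_n$ to $\xi=\pi_n(\eta)\in\tilde\tra_{\varCA_n}$, these macro-cells become non-blank cells of $\xi$ lying in a strip of bounded transverse width which, because $W_n\gg B_n$, is within $o(1)$ of horizontal; counting them by the \emph{horizontal} variants of Claim~\ref{claim:Pigeonhole} and Claim~\ref{claim:Density} already used in the proof of Proposition~\ref{prop:DensityConvergence}, and noting that each such macro-cell meets the line in at most $O(B_n/|i|)$ cells, one obtains $O(W_{n+1}/n)$ non-blank cells on the line in a window containing $W_{n+1}/k$ values of $t$, i.e.\ density $O(q/n)$. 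Letting $n\to\infty$ through $N$ gives density $0$, and Corollary~\ref{cor:UEChara} concludes.

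The step I expect to be the main obstacle is precisely the geometric bookkeeping in the case $i\neq 0$: one must verify that under the successive state functions $\pi_n$ the line of direction $(i,k)$ is sheared into a strip that stays close enough to horizontal, uniformly over a window of length $W_{n+1}$, for the horizontal sparsity and pigeonhole estimates to apply with the same constant $q$, and that the non-blank macro-cells near the line are (up to the bounded number of incompatible alignments controlled by Claim~\ref{claim:Pigeonhole}) aligned to the grid of $S_n$, so that they may be identified with cells of $\xi$. Once this is in place, the remainder is a routine transcription of the vertical argument of Theorem~\ref{thm:MainResult} and the horizontal argument of Proposition~\ref{prop:DensityConvergence}.
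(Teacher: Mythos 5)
Your proposal follows essentially the same route as the paper's (itself only sketched) proof: reduce via Corollary~\ref{cor:UEChara} to showing the non-blank cells of any trajectory have vanishing density along the line of direction $(i,k)$, dispose of finite $N$ as in Theorem~\ref{thm:MainResult}, and for infinite $N$ rerun Claims~\ref{claim:Pigeonhole} and~\ref{claim:Density} along the line, exploiting that under $\pi_n$ the line is sheared towards the horizontal because $W_n \gg B_n$. The one bookkeeping step you flag as the main obstacle is exactly where the paper differs slightly in technique: rather than applying the purely horizontal sparsity estimate to a bounded-width strip (note the lifted line still drifts by about $U'_n$ rows over a window of length $W_{n+1}$, so it is not contained in such a strip), the paper proves a generalization of Lemma~\ref{lem:Sparsity} for lines of arbitrary rational slope $a \notin \{-1,1\}$, with a bound depending on $a$ that tends to the original as $a \to 0$, and feeds that into the adapted claims.
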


\begin{proof}[Proof sketch]
The case of finite $N$ is shown as in Theorem~\ref{thm:MainResult}, so suppose that $N$ is infinite.
First, for all $a \in \Q$ such that $a \notin \{-1,1\}$, we can prove the generalization of Lemma~\ref{lem:Sparsity} where $\eta^s_0$ is replaced by $\eta^{\lfloor as \rfloor}_s$, with the bound depending on $a$, but approaching the original as $a$ approaches $0$.
Then, we again adapt the proof of Theorem~\ref{thm:MainResult}.
This time, we need to show that the asymptotic density of non-blank cells on the line $I = \{\eta^{mk}_{mi} \;|\; m \in \N\}$ is $0$.
For this, we prove the analogue of the first claim, where $K \subset \bigcup_{s \in [0,W_n-1]} [si-4B_{n-1},si+3B_{n-1}] \times [sk-3W_{n-1},sk]$, with the bound $q \in \N$ depending on $i$ and $k$.

Consider then the variant of the second claim where we have redefined $L = \{ t \in [\ell W_{n+1}, (\ell+1)W_{n+1}-1] \;|\; \eta^{tk}_{ti} \neq B_{\varalp_0} \}$.
In its proof, note that the set of coordinates $(s,j) \in \Z^2$ such that $I \cap [j,j+B_n-1] \times [s,s+W_n-1] \neq \emptyset$ approximates the discrete line $J = \{ (r,\lfloor \frac{k W_n}{i B_n} r \rfloor) \;|\; r \in \N \}$.
Then the generalization of Lemma~\ref{lem:Sparsity} we proved earlier, together with the fact that $\frac{k W_n}{i B_n}$ converges to $0$ as $n$ grows, allows us to prove an upper bound for $|L|$, which approaches $\frac{EqW_{n+1}}{n}$ for some constant $E \in \N$ (depending on $i$ and $k$) as $n$ grows.
The proof is finished as in Theorem~\ref{thm:MainResult}.
\end{proof}

\subsection{Computability}
\label{sec:Computability}

In this section, we turn to decision problems concerning unique ergodicity and the AND property.
We start by proving their undecidability, which is the reason for defining the effective set $N$.
Note that a $\Pi^0_2$-complete problem is in particular undecidable.

\begin{proposition}
It is $\Pi^0_2$-complete whether a given cellular automaton, or a given AND cellular automaton, is uniquely ergodic.
Similarly, it is $\Pi^0_2$-complete whether a given cellular automaton, or a given uniquely ergodic cellular automaton, is AND.
\end{proposition}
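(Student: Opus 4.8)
The plan is to establish the $\Pi^0_2$ upper bound and the $\Pi^0_2$-hardness separately, and to read off all four assertions from these two ingredients.

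\textbf{Membership.} The key point is that the densities controlling unique ergodicity and the AND property are limits of \emph{subadditive} sequences, so Fekete's subadditivity lemma replaces the limit by an infimum and keeps us in $\Pi^0_2$. For unique ergodicity, set
$h(n) = \max_{w \in \alp^{2n+1}} |\{ t \in [0,n-1] : \text{the central cell of } \CA^t(w) \text{ is not } B_\alp \}|$;
this is computable in $n$, and splitting a trajectory at time $n$ via $\CA^t = \CA^{t-n}\circ\CA^n$ gives $h(n+m)\le h(n)+h(m)$. If $h(n)/n\to 0$ then $d_\CA(B_\alp,x)=1$ for every $x\in\alp^\Z$, so $\CA$ is uniquely ergodic by Proposition~\ref{prop:UEChara} (with $X=\alp^\Z$); conversely, if $\CA$ is uniquely ergodic then by part 3 of Lemma~\ref{lem:UEChara} the averages of $\mathbf 1_{\{y_0\neq B_\alp\}}$ converge uniformly to the constant $\int\!\mathbf 1_{\{y_0\neq B_\alp\}}\,d\mu_0 = 0$, i.e.\ $h(n)/n\to0$. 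Hence $\CA$ is uniquely ergodic iff $h(n)/n\to0$ iff (Fekete) $\inf_n h(n)/n = 0$ iff $\forall k\,\exists n\,(k\cdot h(n)<n)$, a $\Pi^0_2$ condition. For the AND property I would use the computable, $n$-decreasing approximation $h'_n(\ell)=\max_{w\in\B_\ell(\CA^n(\alp^\Z))}|\{i<\ell: w_i\neq B_\alp\}|$ of $h'(\ell)=\max_{w\in\B_\ell(\Omega_\CA)}|\{i<\ell: w_i\neq B_\alp\}|$; the approximation stabilises for each fixed $\ell$ (a classical fact, cited already in the proof of Lemma~\ref{lem:AND}), so $\inf_n h'_n(\ell)=h'(\ell)$, and $h'$ is subadditive since a subword of a word of $\B(\Omega_\CA)$ lies in $\B(\Omega_\CA)$. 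By Lemma~\ref{lem:AND} and Fekete, $\CA$ is AND iff $\inf_{\ell,n} h'_n(\ell)/\ell = 0$, again $\Pi^0_2$. The relativised problems "given an AND CA, is it uniquely ergodic?" and "given a uniquely ergodic CA, is it AND?" are then $\Pi^0_2$ as well, their truth-sets being intersections of two $\Pi^0_2$ classes of automata.

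\textbf{Hardness.} Fix a $\Pi^0_2$ set $A=\{m : \forall a\,\exists b\, R(m,a,b)\}$ with $R$ decidable. For each $m$ I would build, computably and uniformly in $m$, a polynomial-time decidable set $N_m\subseteq\N$ with $N_m$ infinite iff $m\in A$: let $s(m,n)$ be the largest $a$ such that the search for a witness to $R(m,a',\cdot)$ halts within $n$ steps for every $a'\le a$ (and $-1$ if already $a'=0$ fails); then $s(m,\cdot)$ is non-decreasing and tends to $\infty$ exactly when $m\in A$, so put $n\in N_m$ iff $s(m,n)>s(m,n-1)$. Now apply Theorem~\ref{thm:Amplification} to the polynomial CA transformations $G_1^{N_m}$ and $G_2^{N_m}$ of Proposition~\ref{prop:Independent}, obtaining (computably in $m$) automata $\vvarCA_1^{(m)}$ and $\vvarCA_2^{(m)}$. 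By Proposition~\ref{prop:Independent}, $\vvarCA_1^{(m)}$ is always AND and is uniquely ergodic iff $N_m$ is infinite iff $m\in A$; thus $m\mapsto\vvarCA_1^{(m)}$ reduces $A$ to "is this CA uniquely ergodic?", with image entirely inside the AND automata, so it also witnesses $\Pi^0_2$-hardness of the relativised problem. Dually, $\vvarCA_2^{(m)}$ is always uniquely ergodic and is AND iff $m\in A$, giving $\Pi^0_2$-hardness of "is this CA AND?" and of its relativisation to uniquely ergodic automata. (For the two unrelativised statements one could equally use the bottom $\varCA_0^{(m)}$ of the $G_s^{N_m}$-amplifier and invoke Theorem~\ref{thm:MainResult} and Proposition~\ref{prop:DensityConvergence}.)

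\textbf{Main obstacle.} The delicate part is the $\Pi^0_2$ membership: taken naively, "the density of non-blank cells converges to $0$, uniformly in the configuration" is a $\forall\exists\forall$ statement and only gives $\Pi^0_3$, so the whole argument hinges on spotting the subadditivity of $h$ and $h'$, using Fekete to collapse the limit to an infimum, and—in the AND case—interleaving this with the co-recursively-enumerable approximation of $\B(\Omega_\CA)$ without leaving $\Pi^0_2$; cleanly deriving the equivalence "$\CA$ uniquely ergodic $\iff h(n)/n\to0$" from Section~\ref{sec:Prelim} is the other spot that needs care. The hardness direction is routine bookkeeping once Proposition~\ref{prop:Independent} and the effectiveness of the amplifier construction (Theorem~\ref{thm:Amplification}) are invoked.
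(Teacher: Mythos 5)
Your proposal is correct and follows essentially the same route as the paper: membership by turning the combinatorial characterizations (Proposition~\ref{prop:UEChara} and Lemma~\ref{lem:AND}) into $\forall\exists$ formulas with bounded inner quantifiers, and hardness by feeding a uniformly computable, polynomial-time decidable family of sets $N$ (infinite iff the $\Pi^0_2$ instance holds) into the amplifier via $G_s^N$, $G_1^N$, $G_2^N$ and invoking Theorem~\ref{thm:MainResult} and Propositions~\ref{prop:DensityConvergence} and~\ref{prop:Independent}. The only cosmetic differences are your explicit appeal to Fekete's lemma, where the paper obtains the same collapse from the uniformity in Proposition~\ref{prop:UEChara} together with the implicit windowing argument, and your reduction from a generic $\Pi^0_2$ set (whose poly-time set $N_m$ needs a small cap on $a$ to be well defined and polynomial in $n$) instead of the paper's ``$M$ enters state $q$ at step $n$'' formulation.
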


\begin{proof}
First, let $M$ be a Turing machine, and let $q$ be a state of $M$.
Let $N \subset \N$ be the set of numbers $n$ such that $M$, when run on empty input, enters the state $q$ on its the $n$'th step.
Then $N$ is decidable in polynomial time.

Consider first the problem of determining whether a given CA is uniquely ergodic.
Apply Theorem~\ref{thm:Amplification} to the partial sparsification $G_s^N$, and consider the bottom automaton $\varCA_0$.
By Theorem~\ref{thm:MainResult}, $\varCA_0$ is uniquely ergodic if and only if $N$ is infinite.
Since the infinity of $N$, when $M$ is given, is well known to be $\Pi^0_2$-hard, so is the unique ergodicity of a given CA.
The $\Pi^0_2$-hardness of the other properties follows similarly, using the transformations $G_1^N$ and $G_2^N$, together with Propositions~\ref{prop:DensityConvergence} and~\ref{prop:Independent}.

For the other direction, Proposition~\ref{prop:UEChara} implies that the unique ergodicity of a cellular automaton $\CA : \alp^\Z \to \alp^\Z$ is equivalent to the formula
\[ \forall k \in \N : \exists \ell \in \N : \forall w \in \alp^{2 \ell+1} : |\{ \CA^n(w)_{\ell-n-1} \neq B_\alp \;|\; n \in [0,\ell-1] \}| \leq \frac{\ell}{k}, \]
while asymptotic nilpotency in density is equivalent to the formula
\[ \forall k \in \N : \exists n, \ell \in \N : \forall w \in \CA^n(\alp^{2(n+\ell)+1}) : |\{ w_i \neq B_\alp \;|\; i \in [0,\ell-1] \}| \leq \frac{2\ell+1}{k} \]
by the proof of Lemma~\ref{lem:AND}.
Since these are both $\Pi^0_2$ formulae, the properties are $\Pi^0_2$-complete.
\end{proof}

It is a well known theorem, first proved in \cite{Ka92}, that the nilpotency of a given cellular automaton is undecidable.
We strengthen this result by restricting to the class of uniquely ergodic and AND cellular automata.

\begin{proposition}
It is undecidable whether a given uniquely ergodic and AND cellular automaton is nilpotent.
\end{proposition}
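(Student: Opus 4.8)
The plan is to reduce the halting problem to the nilpotency problem restricted to uniquely ergodic and AND cellular automata. From a Turing machine $M$ I will construct, uniformly and computably, a cellular automaton $\Theta_M$ which is \emph{always} uniquely ergodic and asymptotically nilpotent in density, and which is nilpotent if and only if $M$ halts on empty input. Since the latter problem is undecidable, this yields the claim.

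The starting point is the sparse amplifier with effective set $N=\N$, so that its bottom automaton is uniquely ergodic, AND, and not nilpotent by Theorem~\ref{thm:MainResult} and Proposition~\ref{prop:DensityConvergence}. I modify the universal simulators $\CA_n$ occurring in it: during the computation phase, the agent of each colony additionally simulates $M$ on empty input for $n$ steps and, if $M$ halts within $n$ steps, writes the blank state $B_{\varalp_{n+1}}$ to the output fields, so that the colony becomes doomed and is destroyed at the end of the work period. Running $M$ for $n$ steps takes $O(n)$ space and $\mathrm{poly}(n)$ time, so it fits inside a colony once the polynomials $Q_n,U_n$ of Theorem~\ref{thm:Amplification} are chosen large enough (they remain polynomial in $n$, exactly as in that proof), and the modified amplifier still exists; let $\Theta_M$ be its bottom automaton. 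Crucially, the modification changes only the $\varalp_{n+1}$-state that a colony produces, not the $\addr$/$\age$/$\lbor$/$\rbor$ bookkeeping or the creation and destruction mechanisms; consequently all the structural results of Sections~\ref{sec:SparseProp}, \ref{sec:UnivProp} and~\ref{sec:SparseAmp} — in particular Lemmas~\ref{lem:SparseRigidity}, \ref{lem:UnivRigidity}, \ref{lem:MacroCells}, \ref{lem:ChainRigid} and Corollary~\ref{cor:Parents} — hold for $\Theta_M$ verbatim.

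If $M$ does not halt on empty input, the $M$-simulation inside every colony never halts, the modification never fires, and $\Theta_M$ is the bottom of an ordinary sparse amplifier with $N=\N$ carrying some harmless dormant computation; the proofs of Theorem~\ref{thm:MainResult} and Proposition~\ref{prop:DensityConvergence} then apply unchanged, so $\Theta_M$ is uniquely ergodic, AND, and not nilpotent. If $M$ halts at step $T$, put $n_0=T$ and suppose, for contradiction, that some $\eta\in\tra_{\Theta_M}\subset\tilde\tra_{\Theta_M}$ has a non-blank cell. By Corollary~\ref{cor:Parents} applied at level $n_0+1$, some block $P$ is a non-blank macro-cell of the composed simulation $S_{n_0+1}$; unwinding the composition $S_{n_0+1}=S^u_{n_0}\circ S^s_{n_0}\circ S_{n_0}$ (Lemma~\ref{lem:Composition}) together with the rigidity of $S_{n_0}$ (Lemma~\ref{lem:ChainRigid}) and the weak rigidity of the sparse simulation $S^s_{n_0}$ (Lemma~\ref{lem:SparseRigidity}) shows that the corresponding $Q_{n_0}\times U_{n_0}$ block is a non-blank macro-cell $P'$ of the universal simulation $S^u_{n_0}$ inside a trajectory of $\hat\tra_{\CA_{n_0}^M}$. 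Since $\pi^u_{n_0}(P')\ne B_{\varalp_{n_0+1}}$, Lemma~\ref{lem:MacroCells} (first case) forces the colony immediately below $P'$ to compute $\pi^u_{n_0}(P')$ from its own fields. But $M$ halts within $n_0$ steps, so by construction that colony's agent writes the blank state to its output fields, giving $\pi^u_{n_0}(P')=B_{\varalp_{n_0+1}}$, a contradiction. Hence $\Omega_{\Theta_M}=\{\INF B_{\varalp_0}^\infty\}$, so $\Theta_M$ is nilpotent by Proposition~\ref{prop:Nilpotent}, and a nilpotent automaton is trivially uniquely ergodic and AND.

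The main obstacle is the verification that the modification is genuinely innocuous for the structural lemmas and that the colony dimensions suffice: the first holds because the modification is confined to the agent's internal tape and its final output — precisely the part that Sections~\ref{sec:UnivProp} and~\ref{sec:SparseAmp} treat as a black box computing the transition function of the next-level automaton — and the second because $\mathrm{poly}(n)$ extra time and $O(n)$ extra space are absorbed by increasing the exponents in $Q_n=c_1(n+1)^{c_2}$ and $U_n=c_3(n+1)^{c_4}$, exactly as in the proof of Theorem~\ref{thm:Amplification}.
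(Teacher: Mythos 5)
Your reduction is correct and follows the same strategy as the paper: reduce from the halting problem by building a sparse amplifier in which a halting $M$ trivializes every sufficiently high level, conclude nilpotency via Corollary~\ref{cor:Parents} (with the observation that a nilpotent CA is automatically uniquely ergodic and AND, so the promise is met in both cases), and fall back on Theorem~\ref{thm:MainResult} and Proposition~\ref{prop:DensityConvergence} when $M$ never halts. The only real difference is where you install the kill switch: you open up the universal simulator and have the agent additionally run $M$ for $\lev$ steps, overriding the output with the blank state if it halts, which obliges you to re-verify the dimension bounds of Theorem~\ref{thm:Amplification} and to assert that the structural lemmas of Sections~\ref{sec:UnivProp} and~\ref{sec:SparseAmp} ``hold verbatim'' for the modified simulator. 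That assertion is morally right (the modification only changes what the black-box computation outputs, and your use of Lemma~\ref{lem:MacroCells} to rule out non-blank macro-cells at level $n_0$ is sound), but it is exactly the re-verification the paper avoids: the paper instead defines a new \emph{polynomial CA transformation} $G^M_s(n,p)$, equal to $G_s(n,p)$ unless $M$ halts after $n$ steps, in which case it outputs a fixed program $p_0$ for the trivial automaton, and then applies Theorem~\ref{thm:Amplification} and all structural lemmas literally, with no change to the simulator construction at all. Your construction could be recast in that cleaner form (fold the ``if $M$ halts within $n$ steps, output blank'' override into the transformation rather than into the agent), at which point your argument becomes essentially identical to the paper's; as written, it is correct but carries this one extra, informally discharged verification burden.
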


\begin{proof}
Let $M$ be a Turing machine, and let $p_0$ be a program for the trivial cellular automaton on $\{0,1\}^\Z$ that sends everything to $\INF 0 \INF$.
Define the polynomial CA transformation $G^M_s$ by
\[
G^M_s(n, p) = \left\{ \begin{array}{cl}
	p_0, & \mbox{if $M$ halts after $n$ steps on empty input,} \\
	G_s(n, p), & \mbox{otherwise.}
\end{array}\right.
\]
Let $(\vvarCA_n)_{n \in \N}$ be the amplifier sequence of automata given by Theorem~\ref{thm:Amplification} for $G^M_s$.
Now, if $M$ never halts, then $G^M_s = G_s$, and Theorem~\ref{thm:MainResult} and Proposition~\ref{prop:DensityConvergence} imply that $\vvarCA_0$ is uniquely ergodic and AND, but not nilpotent.
However, if $M$ halts on the $n$'th step of its computation, then $\vvarCA_n$ is the trivial automaton, and Corollary~\ref{cor:Parents} applied to $n$ (which is valid up to level $n$) implies that $\vvarCA_0$ is nilpotent.
Thus we have reduced the halting problem to our restricted nilpotency problem.
\end{proof}

\section{Conclusions and Future Directions}
\label{sec:Conclu}

In this article, we have presented a uniquely ergodic cellular automaton which is not nilpotent.
The construction uses a technique we call amplification, which combines self-simulation with arbitrary modification of CA rules.
We have also presented some further results that are given by from minor variants of the construction, as well as related undecidability results.

Our combinatorial characterization of unique ergodicity, Corollary~\ref{cor:UEChara}, only applies to cellular automata with a quiescent state.
Not all cellular automata $\CA : \alp^\Z \to \alp^\Z$ have one, but in general there are states $q_0, \ldots, q_{k-1} \in \alp$ such that $\CA(\INF q_i^\infty) = \INF q_{i+1 \bmod k}^\infty$ for all $i \in [0, k-1]$, and if $\mu_i$ is the Dirac measure concentrated on $\INF q_i^\infty$, then $\frac{1}{k} \sum_{i=0}^{k-1} \mu_i$ is an invariant measure of $\CA$.
It is likely that the characterization can be extended to this general case, and that uniquely ergodic cellular automata without a quiescent state can also be constructed.
However, this seems to require some entirely new ideas, and an even more complicated automaton.

Every uniquely ergodic cellular automaton $\varCA_0 : \varalp_0^\Z \to \varalp_0^\Z$ with a quiescent state $B_{\varalp_0}$ also has the following property.
Let $\CA : \alp^\Z \to \alp^\Z$ be an arbitrary cellular automaton, and consider the product alphabet $\vvaralp = \varalp_0 \times \alp$.
Let $\vvarCA : \vvaralp^\Z \to \vvaralp^\Z$ be any CA that behaves as $\varCA_0$ on the first component of $\vvaralp$, and as $\CA$ on the second component of $\{B_{\varalp_0}\} \times \alp$.
Then we have $\M_\vvarCA = \{\mu_0\} \times \M_\CA$, where $\mu_0$ is the Dirac measure on $\varalp_0$ concentrated on the uniform configuration $\INF B_{\varalp_0}^\infty$.
In other words, $\vvarCA$ has essentially the same set of invariant measures as $\CA$.
However, if we can construct $\vvarCA$ so that it has some desired property, its existence proves that having that property does not essentially restrict the set of invariant measures of a CA.
An obvious candidate would be (some suitable variant of) computational universality, but there are undoubtedly other properties for which the above scheme works too.

Finally, it would be interesting to modify our construction to make the resulting CA reversible.
Of course, a reversible CA cannot be uniquely ergodic, since it preserves the uniform Bernoulli measure, but it might have some weaker property that would shed some light on the possible sets of invariant measures of reversible cellular automata.

\section*{Acknowledgments}

I am thankful to my advisor Jarkko Kari for his advice and support, Ville Salo for many useful discussions on the topics of this article, Charalampos Zinoviadis for directing me to \cite{Ga01} and discussing future directions of this research, Pierre Guillon for his interest in this project, and the anonymous referee for their comments that greatly helped to improve the readability of this article.

\bibliographystyle{plain}
\bibliography{../../../bib/bib}{}

\end{document}